\newtheorem*{theorem}{Theorem}
\newtheorem*{prop}{Proposition}
\newtheorem{lemma}{Lemma}[section]
\newtheorem{sublemma}[lemma]{Sublemma}
\newtheorem{cor}{Corollary}
\begin{document}
\author{Yong Moo Chung and Hiroki Takahasi}
\address{Department of Applied Mathematics, Hiroshima University,
Higashi-Hiroshima 739-8527, JAPAN} \email{chung@amath.hiroshima-u.ac.jp}
\address{FIRST, Aihara Innovative Mathematical Modelling Project,
Japan Science and Technology Agency, Institute of Industrial Science, University of Tokyo, Tokyo
153-8505, JAPAN \\
{\it Current address:} Department of Electronic Science and Engineering,
 Graduate School of Engineering, Kyoto University,
Kyoto 606-8501, JAPAN} \email{takahasi.hiroki.7r@kyoto-u.ac.jp}
\title[Large deviation principle
for Benedicks-Carleson  quadratic maps] {Large deviation
principle for \\
Benedicks-Carleson quadratic maps}

\begin{abstract}
Since the pioneering works of Jakobson and Benedicks $\&$
Carleson and others, it has been known that a positive measure set of quadratic
maps admit invariant probability measures absolutely continuous
with respect to Lebesgue. These measures allow one to
statistically predict the asymptotic fate of Lebesgue almost every
initial condition. Estimating
fluctuations of empirical distributions 
before they settle to equilibrium requires a fairly good control
over large parts of the phase space. We use the sub-exponential
slow recurrence condition of Benedicks $\&$ Carleson to build
induced Markov maps of arbitrarily small scale and associated towers, to which the
absolutely continuous measures can be lifted. These
various lifts together enable us to obtain a control of
recurrence that is sufficient to establish a level 2 large deviation
principle, for the absolutely continuous measures.
This result encompasses dynamics far from equilibrium, and
thus significantly extends presently known local large deviations
results for quadratic maps.
\end{abstract}
\maketitle

\section{Introduction}
Let $X=[-1,1]$, and let $f_a\colon X\circlearrowleft$ be the
quadratic map given by $f_ax=1-ax^2$, where $0< a\leq2$. The
abundance of parameters in this family for which ``chaotic
dynamics" occur has been known since the pioneering works of
Jakobson \cite{Jak81} and Benedicks $\&$ Carleson
\cite{BC85,BC91}: there exists a set of $a$-values near $2$ with
positive Lebesgue measure for which the corresponding $f=f_a$
admits an invariant probability measure $\mu$ that is absolutely
continuous with respect to Lebesgue (acip). By a classical
theorem, for Lebesgue a.e. $x$ the empirical distribution
$\delta_x^n=(1/n)\sum_{i=0}^{n-1}\delta_{f^ix}$ converges weakly to
$\mu$.
The theory of large deviations aims to
provide exponential bounds on the probability that $\delta_x^n$
stays away from $\mu$.

Large deviations questions have been addressed for various
stochastic processes (see e.g. \cite{DonVar75,DonVar76}). 
For dynamical
systems, one cannot expect a full large deviation principle
without strong assumptions \cite{ComRiv11,K,OP, PrzRiv11,Tak82,Tak85,You90}. For the quadratic
map, the enemy is the critical point $x=0$. Up to now, only local
large deviations results are known \cite{KelNow92,MN,ReyYou08},
and a full result which encompasses dynamics far from equilibrium
is still unknown.
 Our aim here is to provide a simple set of
conditions satisfied on a positive measure set in parameter space
and to show that when these conditions are met, a full large
deviation principle holds.

We formulate our conditions as follows: Let
 $\lambda=\frac{9}{10}\log2$ and
$\alpha=\frac{1}{100}$.
\begin{itemize}
\item[(A1)] $f=f_a$ where $a$ is sufficiently near $2$;
\item[(A2)] $|Df^n(f0)|\geq e^{\lambda n}$ for every $n\geq0;$
\item[(A3)] $|f^n0|\geq e^{-\alpha\sqrt{n}}$ for every $n\geq1$;
\item[(A4)] $f$ is topologically mixing on $[f^20,f0]$.
\end{itemize}

Benedicks $\&$ Carleson \cite{BC91} proved the 
the abundance of parameters near $2$ for which (A2) holds.
The abundance
of parameters for which (A3) holds was proved by Benedicks
$\&$ Young \cite{BY92}, and previously by Benedicks $\&$ Carleson
\cite{BC85} under slightly different hypotheses. 
For their parameters, (A4) holds (see \cite[Lem. 2.1]{You92}).
The parameter
sets they constructed have $2$ as a full Lebesgue density point.
Hence, given $a_0<2$ arbitrarily near $2$, there is a set
$A\subset[a_0,2]$ with positive Lebesgue measure such that (A2)-(A4) 
hold for all
$a\in A$.

In what follows, we assume (A1)-(A4) for $f=f_a$.
Then $f$
admits an acip $\mu$.
Let $\mathcal M$ denote the space of Borel probability measures on
$X$ endowed with the topology of weak convergence. 
Let $\mathcal M_f$ denote the set of $f$-invariant Borel
probability measures. Define a Lyapunov exponent of
$\nu\in\mathcal M_f$ by $\lambda(\nu)=\int\log |Df|d\nu$. This is
 strictly positive for any $\nu\in\mathcal
M_f$ \cite{BruKel98,NowSan98}. Let $h(\nu)$ denote the entropy of
$\nu$, and define a free energy function $F\colon \mathcal
M\to\mathbb R\cup\{-\infty\}$ by
\[F(\nu)=\begin{cases} &h(\nu)-\lambda(\nu)\ \
\text{if}\ \ \nu\in\mathcal M_f;  \\
&-\infty \ \ \ \ \ \ \ \ \ \ \ \ \text{otherwise.}\end{cases}\]
By Ruelle's inequality \cite{Rue78}, $F(\nu)\leq0$ and the equality holds only
if $\nu=\mu$ \cite{Led81}.
It is known \cite{BruKel98} that the Lyapunov exponent is not lower semi-continuous,
and so $-F$ may not be lower semi-continuous.
Hence we introduce its lower-semi-continuous regularization $I\colon\mathcal M\to[0,\infty]$
by $$I(\nu)= - \inf_{\mathcal G} \sup \{ F(\xi)\colon \xi\in \mathcal G\},$$
where the infimum is taken over all neighborhoods $\mathcal
G$ of $\nu$ in $\mathcal M$.
Let $\delta_x^n=(1/n)\sum_{i=0}^{n-1}\delta_{f^ix}$,
where $\delta_{f^ix}$ is the Dirac measure at $f^ix$.
Let $\log0=-\infty$.

\begin{theorem}
Let $f=f_a$ satisfy {\rm (A1)}-{\rm (A4)}. Then the large deviation
principle holds for $(f,\mu)$ with $I$ the rate function, namely
\begin{equation*}\label{low}
\varliminf_{n\to\infty}\frac{1}{n} \log \mu\{x\in X\colon
\delta_x^n\in \mathcal G\}\geq-\inf\{I(\nu)\colon {\nu\in \mathcal
G}\}\end{equation*} for any open set $\mathcal G\subset\mathcal M$, and
\begin{equation*}\label{up}
\varlimsup_{n\to\infty}\frac{1}{n} \log \mu\{x\in X\colon
\delta_x^n\in \mathcal K \}\leq-\inf\{I(\nu)\colon\nu\in \mathcal
K\}\end{equation*}
for any closed set $\mathcal K\subset\mathcal
M$. 
\end{theorem}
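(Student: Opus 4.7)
The plan is to establish the upper and lower LDP bounds separately, both via a family of induced Markov maps at arbitrarily small scales, as foreshadowed by the abstract. Using the slow recurrence condition (A3) together with the Lyapunov lower bound (A2), one can build, for every small $\delta>0$, an induced Markov map $F_\delta$ defined on a subset of $X\setminus(-\delta,\delta)$, with bounded distortion and a controlled return-time tail; the associated Young tower $(\widehat X_\delta,\widehat f_\delta)$ admits an invariant probability measure $\widehat\mu_\delta$ projecting onto $\mu$ restricted to a forward-invariant set $\Lambda_\delta\subset X$, with $\mu(\Lambda_\delta)\to 1$ as $\delta\to 0$. This is what allows one to pass from estimates on the towers to estimates for $\mu$.

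For the upper bound on a closed set $\mathcal K\subset\mathcal M$, I would cover $\mathcal K$ by small weak-neighborhoods $\mathcal G_\nu\ni\nu$ and prove for each such $\nu$ an estimate of the form
\[ \varlimsup_{n\to\infty}\frac{1}{n}\log\mu\{x:\delta_x^n\in\mathcal G_\nu\}\leq F(\nu)+\varepsilon,\]
via a Katok-type covering argument: partition $\{x:\delta_x^n\in\mathcal G_\nu\}$ into $(n,\varepsilon)$-Bowen pieces, use an entropy bound in $\mathcal G_\nu$ to estimate the number of such pieces by $\exp(n(h(\nu)+\varepsilon))$, and use absolute continuity of $\mu$ together with distortion bounds on an appropriately chosen tower to estimate each piece's $\mu$-measure by $\exp(-n(\lambda(\nu)-\varepsilon))$. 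A finite subcover of $\mathcal K$ combined with the definition of $I$ as the lower-semi-continuous regularization of $-F$ then yields the claimed upper bound.

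The lower bound on an open set $\mathcal G\subset\mathcal M$ reduces to producing, for each $\nu\in\mathcal G\cap\mathcal M_f$ with $F(\nu)>-\infty$, a subset $E_n\subset\{x:\delta_x^n\in\mathcal G\}$ with $\mu(E_n)\geq\exp(n(F(\nu)-\varepsilon))$. For this I would first approximate $\nu$ in the weak topology by an ergodic measure $\nu'$ that is visible from the tower at a well-chosen scale $\delta=\delta(\nu,\mathcal G)$, then apply Katok's entropy formula to $\nu'$ on the tower to exhibit roughly $\exp(nh(\nu'))$ many $(n,\varepsilon)$-separated $\nu'$-generic points, lower-bound the Lebesgue measure of each Bowen ball by $\exp(-n\lambda(\nu'))$ via the tower distortion bound, and finally invoke the absolute continuity of $\mu$ to transfer the Lebesgue lower bound to a $\mu$-lower bound.

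The principal obstacle is the lower bound, because a generic $\nu\in\mathcal M_f$ need not lift to any prescribed tower: measures that concentrate too much mass near the critical point are invisible at a fixed scale $\delta$. The resolution is a diagonal argument exploiting the availability of towers at every scale, for each $\nu$ one chooses $\delta$ small enough that an ergodic approximant $\nu'$ can be realised on the tower of scale $\delta$, trading weak-topology error against mild degradation of the constants. Controlling this trade-off uniformly across $\nu$, and matching the resulting rate function to $I$ rather than to $-F$ (which, as noted in the text, fails to be lower semi-continuous because the Lyapunov exponent is not upper semi-continuous), is the most delicate aspect of the argument.
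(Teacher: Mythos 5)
Your general framework (towers at multiple scales, lift to $\mu$, split into upper and lower bounds) matches the paper's in spirit, and your treatment of the lower bound is in the right ballpark: the paper simply cites the Lebesgue lower bound of Chung \cite{Chu11} and transfers it to $\mu$ using the fact that $d\mu/d{\rm Leb}$ is bounded away from zero on $[f^20,f0]$; your Katok-type argument would essentially reprove that input.

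The upper bound is where there is a genuine gap, and it is precisely the part the paper calls ``all our effort.'' Your step ``use an entropy bound in $\mathcal G_\nu$ to estimate the number of $(n,\varepsilon)$-Bowen pieces covering $\{x:\delta_x^n\in\mathcal G_\nu\}$ by $\exp(n(h(\nu)+\varepsilon))$'' is exactly the hard part, and there is no a priori reason it holds for quadratic maps. It is a cardinality bound on the level set, not a Katok-type lower bound for a fixed measure. For uniformly hyperbolic systems one gets it from a Markov partition: the number of $n$-cylinders with empirical distribution near $\nu$ is controlled by $h(\nu)$. For a Benedicks--Carleson map there is no such a priori coding; worse, the companion estimate $\mu(\text{Bowen piece})\leq e^{-n(\lambda(\nu)-\varepsilon)}$ also fails pointwise, since Bowen balls that pass very close to the critical point have uncontrolled size and distortion. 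This is why the general upper bounds of Young \cite{You90} and the nonuniformly expanding estimates of \cite{AraPac06,Var12} do not match the lower bound rate function, a point the paper stresses.

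The paper's upper bound avoids Bowen-piece counting entirely. Rather than starting from a fixed $\nu$ and trying to estimate cardinalities and ball measures, it works on the tower and, for each $n$, \emph{produces} an invariant measure $\sigma$ supported on a horseshoe for $f^m$ with $m\approx n$, such that $F(\sigma)\geq \frac{1}{n}\log\hat\mu(\mathcal B_n')$ up to small errors and $\sigma(\varphi_j)\geq b_j-\varepsilon^{1/2}$. The horseshoe is built from tower partition elements, and its total length is comparable to $\hat\mu(\mathcal B_n')$ by Lemma \ref{horse}(b); the estimate $F(\sigma)n\gtrsim \log\hat\mu(\mathcal B_n')$ then follows from the variational principle on the horseshoe (estimates \eqref{lem1}--\eqref{lem2}). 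The key technical lemma enabling all of this is Lemma \ref{Mar}: in \emph{any} partition element of the tower, a definite fraction of points fall back to the ground floor within time $\approx\varepsilon^{1/3}n$. This is what makes the horseshoe-volume lower bound possible and is the main new ingredient; your proposal has no counterpart to it. Incidentally, the semicontinuity issue is that $\lambda$ may fail to be \emph{lower} semi-continuous (so $-F=\lambda-h$ may fail to be lower semi-continuous), not that $\lambda$ fails upper semi-continuity as you wrote.
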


We state a corollary which follows from the Contraction Principle in large
deviations, and use it to compare our result
with the previous related ones. Let $C(X)$ denote the space of all
continuous functions on $X$. For $\varphi\in C(X)$, write
$S_n\varphi=\sum_{i=0}^{n-1}\varphi\circ f^i$ and define
$$c_\varphi=\inf_{x\in X}\varliminf_{n\to\infty}\frac{1}{n}S_n\varphi(x)
\ \text{ and } \ d_\varphi=\sup_{x\in
X}\varlimsup_{n\to\infty}\frac{1}{n}S_n\varphi(x).$$
 The
compactness of $\mathcal M_f$ implies
$c_\varphi=\min\{\nu(\varphi)\colon\nu\in\mathcal M_f\}$ and
$d_\varphi=\max\{\nu(\varphi)\colon\nu\in\mathcal M_f\}$, where
$\nu(\varphi)=\int\varphi d\nu$. We assume $c_\varphi<d_\varphi$,
for otherwise it is meaningless to consider $\varphi$.
Define $F_{\varphi}\colon[c_\varphi,d_\varphi]\to\mathbb R$ by $$F_\varphi(t)=\sup\{F(\nu)\colon \nu\in\mathcal M_f,\nu(\varphi)
=t\},$$ which is concave and so continuous on $(c_\varphi,d_\varphi).$ 

\begin{cor}\label{contract}
For all $a,b\in [c_\varphi,d_\varphi]$ such that $a<b$ we have
\[\lim_{n\to\infty}\frac{1}{n}\log \mu\left\{
a\leq\frac{1}{n}S_n\varphi\leq b\right\}=\max_{a\leq t\leq b
}F_\varphi(t).\]
\end{cor}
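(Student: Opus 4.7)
The plan is to deduce Corollary~\ref{contract} from the Theorem by the contraction principle applied to the continuous evaluation map $\Phi\colon\mathcal M\to\mathbb R$, $\Phi(\nu):=\nu(\varphi)$, and then to identify the contracted rate function with $-F_\varphi$.

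First, since $\varphi\in C(X)$, the map $\Phi$ is continuous in the weak topology, and $\Phi(\delta_x^n)=\tfrac{1}{n}S_n\varphi(x)$. Hence $\Phi^{-1}([a,b])$ is closed and $\Phi^{-1}((a,b))$ is open in $\mathcal M$. Applying the Theorem to these sets gives
\[
\varlimsup_{n\to\infty}\frac{1}{n}\log\mu\bigl\{a\leq\tfrac{1}{n}S_n\varphi\leq b\bigr\}\leq-\inf_{t\in[a,b]}J(t),
\]
\[
\varliminf_{n\to\infty}\frac{1}{n}\log\mu\bigl\{a<\tfrac{1}{n}S_n\varphi<b\bigr\}\geq-\inf_{t\in(a,b)}J(t),
\]
where $J(t):=\inf\{I(\nu)\colon\nu\in\mathcal M,\ \nu(\varphi)=t\}$.

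The next step is to show $J=-F_\varphi$ on $(c_\varphi,d_\varphi)$. The bound $J\leq-F_\varphi$ is immediate from the definition of $I$: for every $\nu\in\mathcal M_f$ we have $-I(\nu)=\inf_{\mathcal G\ni\nu}\sup_{\xi\in\mathcal G}F(\xi)\geq F(\nu)$, whence $I(\nu)\leq-F(\nu)$, and taking the infimum over $\nu\in\mathcal M_f$ with $\nu(\varphi)=t$ gives $J(t)\leq-F_\varphi(t)$. For the reverse, note that $\mathcal M_f$ is closed and $I\equiv+\infty$ outside it, so only $\nu\in\mathcal M_f$ contribute to $J(t)$. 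For such $\nu$, testing the definition of $I(\nu)$ against the neighborhoods $\mathcal G_\delta:=\Phi^{-1}((t-\delta,t+\delta))$ gives
\[
-I(\nu)\leq\inf_{\delta>0}\sup_{\xi\in\mathcal G_\delta}F(\xi)=\inf_{\delta>0}\sup_{|s-t|<\delta}F_\varphi(s)=F_\varphi(t),
\]
where the last equality uses the continuity of the concave function $F_\varphi$ at $t\in(c_\varphi,d_\varphi)$. Hence $J(t)\geq-F_\varphi(t)$.

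Finally, whenever $[a,b]\subset(c_\varphi,d_\varphi)$ the function $-J=F_\varphi$ is continuous on $[a,b]$, so $\inf_{[a,b]}J=\inf_{(a,b)}J=-\max_{t\in[a,b]}F_\varphi(t)$, which sandwiches the desired limit. The endpoint cases $a=c_\varphi$ or $b=d_\varphi$ are handled by a short limiting argument, approximating from the interior and using the one-sided monotonicity that concavity forces near the boundary. I expect the principal difficulty to be the identification $J=-F_\varphi$: a priori $-I$ is only the upper-semi-continuous envelope of $F$ on $\mathcal M_f$ rather than $F$ itself, but the continuity of $\Phi$ together with the continuity of $F_\varphi$ on the interior absorbs this discrepancy upon contracting.
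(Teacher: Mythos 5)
Your argument is correct and is essentially the route the paper intends (it simply invokes the Contraction Principle without writing out details); the substantive step you supply — identifying the contracted rate $J$ with $-F_\varphi$ by testing $I$ against the neighborhoods $\Phi^{-1}((t-\delta,t+\delta))$ and using continuity of the concave $F_\varphi$ in the interior — is exactly the point that needs to be checked, and you handle it correctly. The endpoint case is dispatched a bit briskly, but concavity does give one-sided semicontinuity at $c_\varphi,d_\varphi$ so that $\sup_{(a,b)}F_\varphi=\sup_{[a,b]}F_\varphi$ and the sandwich still closes.
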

Our theorem is the first full large deviations result for a
positive measure set of quadratic maps, despite a large number of
papers over the past thirty years dedicated to stochastic
properties of chaotic dynamics in one-dimensional maps. Up to now,
only local results are known, which claim the
existence of the above limit in the case where $\varphi$ is
H\"older continuous and $a,b$ are near the mean $\mu(\varphi)$
 \cite{KelNow92,MN,ReyYou08}.

The next corollary follows from Varadhan's integral lemma \cite[p.137]{DZ}
and the convex duality of the Fenchel-Legendre transforms \cite[p.152]{DZ}.
\begin{cor}
For any $\varphi\in C(X)$, the limit
\[P(\varphi)=\lim_{n\to\infty}\frac{1}{n}\log \mu(e^{S_n\varphi})\]
exists. In addition, $(P,I)$ form a Legendre pair, namely the following holds:
$$P(\varphi)=\max\{\nu(\varphi)-I(\nu)\colon\nu\in\mathcal M_f\}\ \ \text{for}
\ \varphi\in C(X);$$
$$I(\nu)=\max\{\nu(\varphi)-P(\varphi)\colon\varphi\in C(X)\}\ \
\text{for}\ \ \nu\in\mathcal M_f.$$
\end{cor}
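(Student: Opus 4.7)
The plan is to deduce both parts of the statement from the large deviation principle in the Theorem, by combining Varadhan's integral lemma with the Fenchel--Moreau biduality theorem.

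First I would apply Varadhan's lemma \cite[p.137]{DZ}. Since $X$ is compact, every $\varphi\in C(X)$ is bounded, and the evaluation map $\nu\mapsto\nu(\varphi)$ is continuous on $\mathcal M$ in the weak topology. Using $S_n\varphi(x)=n\delta_x^n(\varphi)$ one has $\int e^{S_n\varphi}d\mu=\int e^{n\delta_x^n(\varphi)}d\mu(x)$, so Varadhan's lemma, applied to the LDP for $\{\delta_x^n\}$ under $\mu$ from the Theorem, gives
\[
P(\varphi)=\lim_{n\to\infty}\frac{1}{n}\log\int e^{S_n\varphi}d\mu=\sup_{\nu\in\mathcal M}\{\nu(\varphi)-I(\nu)\}.
\]
Because $\mathcal M_f$ is closed in $\mathcal M$ and $F\equiv-\infty$ off $\mathcal M_f$, the definition of $I$ forces $I\equiv+\infty$ off $\mathcal M_f$, so the supremum is unchanged when restricted to $\mathcal M_f$. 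As $\mathcal M$ is compact and $I$ is lower semi-continuous, $\nu\mapsto\nu(\varphi)-I(\nu)$ is upper semi-continuous and attains its maximum. This produces both the existence of $P(\varphi)$ and the first Legendre identity.

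For the second identity I would invoke Fenchel--Moreau biduality \cite[p.152]{DZ}. The crucial observation is that $F=h-\lambda$ is affine on the convex set $\mathcal M_f$ (the metric entropy $h$ is affine, and $\nu\mapsto\int\log|Df|d\nu$ is linear), so extending $-F$ by $+\infty$ off $\mathcal M_f$ gives a convex function on $\mathcal M$. Since $\mathcal M$ is metrizable, the lower semi-continuous envelope of a convex function is again convex (pick approximating sequences at the two endpoints and apply the convexity inequality along their convex combination), so $I$ is a proper, convex, lower semi-continuous function on $\mathcal M$ with $I(\mu)=0$. By Fenchel--Moreau, $I=I^{**}$, where the identity just proved gives $I^{*}(\varphi)=\sup_\nu\{\nu(\varphi)-I(\nu)\}=P(\varphi)$. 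Hence $I(\nu)=\sup_{\varphi\in C(X)}\{\nu(\varphi)-P(\varphi)\}$, and at points $\nu\in\mathcal M_f$ where $I(\nu)<+\infty$ a Hahn--Banach separation argument furnishes a continuous affine minorant of $I$ tangent at $\nu$, producing a $\varphi\in C(X)$ that realizes the supremum as a maximum.

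The only non-routine point, which I expect to be the main obstacle, is verifying that the l.s.c.\ regularization used in the definition of $I$ preserves the convexity inherited from the affinity of $F$; once this short sequential argument is settled, the remaining steps are direct applications of standard results already referenced in the paper.
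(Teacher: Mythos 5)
Your approach is the same as the paper's, which proves this corollary by a one-line citation of Varadhan's integral lemma and the Fenchel--Moreau duality in Dembo--Zeitouni; you have correctly isolated the one hypothesis that genuinely needs checking, namely convexity of $I$, and verified it by observing that $F=h-\lambda$ is affine on the convex set $\mathcal M_f$ (entropy is affine, $\lambda$ is linear), that the extension of $-F$ by $+\infty$ off the closed convex set $\mathcal M_f$ is convex, and that the lower semi-continuous envelope of a convex function remains convex. The rest is a routine application of Varadhan (with the moment condition automatic since $X$ is compact and $\varphi$ bounded) and of $I=I^{**}$.

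The only soft spot is your closing Hahn--Banach claim that the supremum in the second identity is attained whenever $I(\nu)<\infty$: a proper convex l.s.c.\ function on an infinite-dimensional space need not be subdifferentiable at every point of its effective domain (Bishop--Phelps guarantees only a dense set of such points), and $\mathcal M_f$ has empty interior in $C(X)^*$, so the separation argument can produce a vertical supporting hyperplane. Strictly speaking the second identity should therefore be a supremum. Since the paper also writes ``max'' there with no further justification beyond the same citation, this imprecision is inherited from the source and is not a gap specific to your argument; the first identity's max, by contrast, is correctly justified by upper semi-continuity on the compact $\mathcal M$.
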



For a broad class of nonuniformly hyperbolic systems
including the quadratic maps
we treat here, {\it towers} (or {\it inducing schemes}) have been heavily used to draw their interesting properties (see e.g. 
\cite{BalVia96,BruKel98,You98,You99}).
A proof of the theorem also relies on the construction of induced Markov maps and associated towers. There is a great deal of freedom and flexibility in the construction of towers, and hence the issue is to construct a ``nice tower" that captures
relevant information of the underlying system.
This issue has been addressed in the literature, and it is sometimes referred to as the {\it liftability problem}. A less emphasized issue deals with the construction of 
{\it a family of towers for a given single system}, so that they altogether 
provide relevant information. This type of approach can be found, for instance, in \cite{BruTod09,IomTod11,PrzRiv11}, and appears to be successful when it is difficult to obtain 
necessary information just by considering a single tower. 

For our quadratic maps, towers have already been constructed (see e.g. \cite{BalVia96,BLS03,You92,You98}), for which the decay rate of the
tail of return times is exponential.
We emphasize that exponential tails do not necessarily imply the full
large deviation principle, primarily because probabilities of rare
events included in the tails are unaccounted for. For instance, for
certain Markov processes it is well known
\cite{BJV91,DonVar75,DonVar76} that exponential tails of return
times are in general not sufficient to ensure a full large
deviation principle. They only imply a local large deviation
result, which is similar to the results in
\cite{MN,ReyYou08}. A full large deviation principle for stationary processes
has been established under very strong mixing conditions \cite{Bry90,BryDem96}, which cannot be expected for dynamical systems.

In \cite{Chu11},  sufficient conditions on the ``shape" of towers
were introduced to ensure a full large deviation principle
for Lebesgue measure. However, for our quadratic
maps it is difficult to construct such ``ideal towers", apart from very
special cases (e.g. Misiurewicz maps). Therefore, we abandon
working with a single tower and instead construct various induced
Markov maps and associated towers. We use them together to
obtain an upper exponential bound, on the probability that time
averages of continuous functions stay away from their spatial
averages. We establish the large deviation principle by 
comparing this upper bound with a lower exponential one,
which is obtained directly from \cite{Chu11,Now93,You92}.
The
upper and lower large
deviation bounds were obtained in a very
general setting in \cite{You90}, and for nonuniformly expanding
maps in \cite{AraPac06,Var12}. These bounds are not comparable and hence insufficient to conclude
the large deviation principle.

A proof of the theorem is briefly outlined as follows.
 Given $d\geq1$, functions $\varphi_1,\ldots,\varphi_d$
on $X$ and $b_1,\ldots,b_d\in\mathbb R$, define
$$\overline{R}\left(\varphi_1,\ldots,\varphi_d;b_1,\ldots,b_d\right)=
\varlimsup_{n\to\infty}\frac{1}{n} \log \mu\left\{
\frac{1}{n}S_n\varphi_j\geq b_j,\ \ j=1,\ldots, d\right\},$$ and
$$\underline{R}\left(\varphi_1,\ldots,\varphi_d;b_1,\ldots,b_d\right)=
\varliminf_{n\to\infty}\frac{1}{n}\log \mu\left\{
\frac{1}{n}S_n\varphi_j> b_j,\ \ j=1,\ldots, d\right\}.$$
All our effort is dedicated to
proving the following proposition.
\begin{prop}\label{upper}
Let $f=f_a$ satisfy {\rm (A1)-(A4)}. Let $d\geq1$ and let
$\varphi_1,\ldots,\varphi_d$ be a collection of Lipschitz
continuous functions on $X$, and let $b_1,\ldots,b_d\in\mathbb R$.
For any $\varepsilon>0$ there exists $n_0\in\mathbb N$ such that
if $n\geq n_0$ then there exists 
$\sigma\in\mathcal M_f$ such that:
\begin{equation}
\label{upper1}
\frac{1}{n} \log \mu\left\{ \frac{1}{n}S_n\varphi_j\geq b_j,\
j=1,\ldots,d\right\}\leq
(1-\varepsilon^{1/5})F(\sigma)+2\varepsilon^{1/5};\end{equation}
\begin{equation}\label{upper5}
\sigma(\varphi_j) \geq b_j-\varepsilon^{1/2},\ \ \  j=1,\ldots,d.
\end{equation}
\end{prop}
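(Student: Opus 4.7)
The plan is to bound the measure of the constraint set by using a family of induced Markov schemes built from the slow recurrence condition (A3), and to extract the measure $\sigma$ from empirical distributions along orbits that nearly realize the constraints.

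First, I would construct a family of induced Markov maps $F_k\colon\Delta_k\to\Delta_k$ at scales $k\to\infty$ (where $k$ controls the diameter of the base), with induced return times $R_k$ whose tails $\mu\{R_k>n\}$ decay stretched-exponentially, obtained from (A2)--(A3) by the now-standard Benedicks--Carleson binding/free-period analysis. Each such induced system gives rise to a Young tower $\hat f_k$ with projection $\pi_k$ to which the acip $\mu$ lifts. Given $\varepsilon>0$, I would fix the scale $k=k(\varepsilon)$ small enough that (i) each Lipschitz $\varphi_j$ oscillates at most $\varepsilon^{1/2}$ on a Markov rectangle of $F_k$ (and on its inverse branches for the induced dynamics), and (ii) the tail term $\mu\{R_k>\varepsilon^{1/5}n\}$ is negligible compared with $e^{-n\varepsilon^{1/5}}$.

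Second, I would discard from the event $E_n=\{x\colon\frac{1}{n}S_n\varphi_j(x)\ge b_j,\ j=1,\dots,d\}$ the portion of orbits whose visits to $\Delta_k$ take an unusually long induced time (handled by the tail estimate above), and cover the remainder by a union of $n$-cylinders of $F_k$ concatenated with a short prefix and suffix of total length at most $\varepsilon^{1/5}n$. Bounded distortion for $F_k$ then gives
\[
\mu(C)\le\exp\Bigl(-\textstyle\sum\log|DF_k|\,+\,O(\varepsilon^{1/5}n)\Bigr)
\]
on each such cylinder $C$, where the sum of logarithmic derivatives along the induced orbit is, by Birkhoff, close to $n\lambda(\nu_C)$ for the empirical measure $\nu_C$ of any representative orbit in $C$. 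Summing over cylinders with a fixed empirical type $\nu$ (Sanov-style grouping) and using the fact that the number of such cylinders is at most $\exp(n(h(\nu)+\varepsilon^{1/5}))$, one obtains
\[
\mu(E_n)\le \exp\bigl(n\sup_{\nu}[h(\nu)-\lambda(\nu)]+O(n\varepsilon^{1/5})\bigr),
\]
where the supremum is over $f$-invariant measures $\nu$ whose empirical type is compatible with the constraint. Choosing $\sigma$ to be a measure realizing (up to $\varepsilon^{1/5}$) this supremum — concretely, the projection under $\pi_k$ of the $\hat f_k$-invariant average of the normalized empirical distribution of an orbit in a near-maximal cylinder — and using the oscillation bound (i) gives the side condition $\sigma(\varphi_j)\ge b_j-\varepsilon^{1/2}$ of (\ref{upper5}), while the combinatorial estimate gives (\ref{upper1}) with the prefactor $(1-\varepsilon^{1/5})$ arising because a fraction $\varepsilon^{1/5}$ of the orbit length is absorbed into the short prefix/suffix portions not controlled by the induced dynamics.

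The principal obstacle is the third step above: orbits that visit a neighborhood of the critical point many times are exactly those for which the local Lyapunov exponent is almost zero, so $F(\nu)$ is delicate. A single tower cannot simultaneously capture these different orbit types with uniform distortion, which is the reason for introducing a \emph{family} of induced schemes and performing a scale-dependent covering. The main technical work is to show that the covering can be done with errors controlled uniformly in the parameter $\varepsilon$, and that the entropy--expansion trade-off obtained on each piece can be recombined into a single bound involving an $f$-invariant measure $\sigma$ — rather than, say, a convex combination of candidate measures. Once this is accomplished, (\ref{upper1}) and (\ref{upper5}) follow by tracking the accumulated errors $\varepsilon^{1/2}$ (from Lipschitz oscillation) and $\varepsilon^{1/5}$ (from distortion, combinatorial counting, and tail truncation) and the proposition is proved.
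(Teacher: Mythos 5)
Your outline captures several genuine ingredients of the argument (a tower at scale depending on $\varepsilon$, tail truncation, the reduction to a finite number of cylinders, and the bookkeeping of $\varepsilon^{1/5}$ and $\varepsilon^{1/2}$ errors), but the central step is not actually carried out, and the substitute you propose does not work as stated.

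The proposition requires producing an $f$-\emph{invariant} measure $\sigma\in\mathcal M_f$. You suggest taking $\sigma$ to be ``the projection under $\pi_k$ of the $\hat f_k$-invariant average of the normalized empirical distribution of an orbit in a near-maximal cylinder.'' The empirical distribution of a finite orbit segment is not invariant, and averaging it over finitely many iterates of $\hat f_k$ does not make it invariant; passing to a weak limit as $n\to\infty$ would, but then one must control the entropy and the Lyapunov exponent of the limit, which is exactly the difficulty one set out to avoid (and $\nu\mapsto h(\nu)$ is not lower semi-continuous while $\nu\mapsto\lambda(\nu)$ need not be upper semi-continuous here). A similar gap afflicts the Sanov-style cylinder count: for the induced map the alphabet is countably infinite and the return-time levels unbounded, and the bound ``at most $\exp(n(h(\nu)+\varepsilon^{1/5}))$ cylinders of empirical type $\nu$'' is a nontrivial Katok-type estimate that you neither prove nor reduce to something proved; in this setting it is not a free consequence of bounded distortion.

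The paper circumvents both difficulties by constructing a genuine horseshoe. Each $A\in\mathcal B_n'$ is completed, using Lemma~\ref{Mar}, into a slightly larger piece $A'$ (at the cost of $\leq\varepsilon^{1/3}n$ extra iterates) that is mapped diffeomorphically onto the base $\Omega_\infty^\pm$; then (A4) supplies a short bridge $f^{m_1}$ closing the loop, giving intervals $L_1,\dots,L_q$ and an integer $m\approx n$ such that $f^m$ maps each $L_i$ diffeomorphically onto a common interval. The measure $\sigma$ is then the $f$-average of an equilibrium state of $g=f^m|_H$ for the potential $-\log|Dg|$: being an equilibrium state on a uniformly hyperbolic Cantor set, it is automatically $g$-invariant, and the variational principle replaces the cylinder-counting estimate, giving $h_g(\nu_\Phi)-\nu_\Phi(\Phi)\geq\log\sum_i|L_i|+\log\tau\geq\log\hat\mu(\mathcal B_n')-2\varepsilon^{1/5}n$ directly. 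Lemma~\ref{Mar} (approximation by points that fall down quickly to the tower base) is precisely what makes this horseshoe completion possible, and it is the key structural fact your proposal does not identify.
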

It then follows that
\begin{equation}\label{upper-1}
\overline{R}\left(\varphi_1,\ldots,\varphi_d;b_1,\ldots,b_d\right)
\leq\lim_{\varepsilon\to0}\sup\left\{F(\nu)\colon\nu\in\mathcal
M_f,\ \nu(\varphi_j)\geq b_j-\varepsilon^{1/2},\ \ j=1,\ldots, d
\right\},\end{equation} 
where we let $\sup\emptyset=-\infty$ by convention. 
Meanwhile, by a result of 
\cite{Now93,You92},
the density
of $\mu$ is uniformly bounded away from zero on 
$[f^20,f0]$. Hence, the lower bound obtained in
\cite{Chu11} for Lebesgue translates into a lower
bound for $\mu$, namely
\begin{equation}\label{lower-1}\underline{R}\left(\varphi_1,\ldots,\varphi_d;b_1,\ldots,b_d\right)
\geq\sup\left\{F(\nu)\colon\nu\in\mathcal M,\ \nu(\varphi_j)>
b_j,\ \ j=1,\ldots, d \right\}.\end{equation} Observe that
the weak topology on $\mathcal M$ has a countable base 
generated by open sets of the form $ \left\{\nu\in\mathcal M
\colon\nu(\varphi_j)>b_j,\ j=1,\ldots,d \right\},$ where $d\geq1$,
each $\varphi_j$ is Lipschitz continuous and $b_j\in\mathbb R$.
Hence, (\ref{upper-1}) (\ref{lower-1}) imply the theorem.

Our strategy for the proof of the proposition is to construct a family of 
towers and use them to construct various horseshoes carrying invariant measures with the properties as in the statement. At this point, we make
important use of the sub-exponential slow recurrence condition (A3).

The rest of this paper consists of two sections. In Sect. 2 we
develop preliminary estimates and constructions. We modify the
classical binding argument and the return
time estimate \cite{BC85,BC91}, so that we can
treat an arbitrarily small $\varepsilon$. In Sect. 3 we prove the
proposition. A crucial estimate is Lemma \ref{Mar}, which roughly states
that \emph{any partition element of the tower is approximated by 
points which quickly return to the base of the tower}.
To equip our tower with this property, we construct 
an induced map on a Cantor set, which consists of points slow recurrent 
to the critical point. 
This construction is inspired by that of Benedicks \& Young for 
H\'enon-like attractors \cite{BenYou00}.

To maintain the brevity of this paper we refrain from
generalizations. Our arguments and results may be generalized
to $C^2$ Collet-Eckmann unimodal maps
with non-flat critical point, for which the recurrence of the
critical orbit is sub-exponential. It is known \cite{AM05} that almost every
stochastic quadratic map satisfies these two conditions.

\section{Preliminary estimates and constructions}
In this section, we develop preliminary estimates needed
for the proof of the proposition.
We develop a binding argument for recovering expansion
and prove a return time estimate.
Original ideas for these can be found in \cite{BC85,BC91}.
We modify them to treat an arbitrarily small $\varepsilon>0$.
We suppose that $\varepsilon$ is given, and then
choose sufficiently large integer $N$.

We use the following notations: $c_0= f0$ and
$c_n=f^nc_0$ for $n\geq1$; $|\cdot|$ is the Lebesgue measure on $X$;
for a set $A\subset X$, $d(0,A)=\inf\{|x|\colon x\in
A\}$; given a partition $\mathcal P$ of $A\subset X$
and $B\subset A$,
$\mathcal P|B=\{\omega\cap B\colon \omega\in\mathcal P\}$.

\subsection{Bounded distortion}\label{bddi}
For $n\geq1$, let
\begin{equation}\label{Theta}
D_n=\frac{1}{10}\cdot\left[\sum_{i=0}^{n-1} d_i^{-1}\right]^{-1}, \
\ \ \ \text{where} \ \ \ \ \ d_i=\frac{|c_i|}{|Df^i(c_0)|}.
\end{equation}

\begin{lemma}\label{dist}
For all $x,y\in I=[1-D_n,1]$, $$\frac{Df^n(x)}{Df^n(y)}\leq2\
\ \text{and} \ \
\left|\frac{Df^n(x)}{Df^n(y)}-1\right|\leq\frac{|x-y|}{D_n}.$$
\end{lemma}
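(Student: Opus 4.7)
The plan is to use the chain rule together with a simultaneous induction on the iterate $j$. Since $Df(z)=-2az$, the ratio $Df(f^ix)/Df(f^iy)$ equals $f^ix/f^iy$, so telescoping gives
$$\log\frac{Df^n(x)}{Df^n(y)}=\sum_{i=0}^{n-1}\log\frac{f^ix}{f^iy}.$$
Once $f^ix$ and $f^iy$ are known to share a sign, each term is bounded by $|f^ix-f^iy|/\min(|f^ix|,|f^iy|)$, so the target estimate is
$$\left|\log\frac{Df^n(x)}{Df^n(y)}\right|\leq \frac{2|x-y|}{5D_n}.$$
This yields both conclusions of the lemma: since $|x-y|\leq D_n$, the right side is at most $2/5<\log 2$, and the elementary inequality $|e^t-1|\leq 2|t|$ for $|t|\leq 2/5$ produces the quantitative bound $|Df^n(x)/Df^n(y)-1|\leq 4|x-y|/(5D_n)\leq |x-y|/D_n$.

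To produce the target estimate I would run a joint induction on $j=0,1,\ldots,n$ over two statements: (i) $Df^j$ has distortion at most $2$ on $I$, and (ii) for every $z\in I$, $|f^jz-c_j|\leq |c_j|/2$ (in particular $f^jz$ has the sign of $c_j$ and $|f^jz|\geq |c_j|/2$). The base $j=0$ is immediate from $d_0=1$ and $D_n\leq 1/10$.

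For the inductive step to $j_*$, assuming (i) and (ii) for $j<j_*$: at each $i<j_*$ the inductive distortion gives $|f^ix-f^iy|\leq 2|Df^i(c_0)|\cdot|x-y|$ by integrating $Df^i$ over $[x,y]$, and shadowing bounds the denominator by $|c_i|/2$. Summing yields
$$\left|\log\frac{Df^{j_*}(x)}{Df^{j_*}(y)}\right|\leq 4|x-y|\sum_{i=0}^{j_*-1}d_i^{-1}\leq 4D_n\cdot\frac{1}{10D_n}=\frac{2}{5},$$
which establishes (i) at $j_*$ (with $e^{2/5}<2$). Property (ii) at $j_*\leq n-1$ then follows from the mean value theorem combined with (i) just proved: $|f^{j_*}z-c_{j_*}|\leq 2|Df^{j_*}(c_0)|\cdot D_n$, and since $d_{j_*}^{-1}$ appears in the defining sum we have $D_n\leq d_{j_*}/10$, which makes this at most $|c_{j_*}|/5$.

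The main obstacle is the apparent circularity of the induction: the distortion bound at $j_*$ is used to prove shadowing at $j_*$, while shadowing at earlier steps is used to prove distortion at $j_*$. It unwinds cleanly because the distortion estimate at $j_*$ only invokes shadowing at strictly earlier steps. The factor of $1/10$ in the definition of $D_n$ is what drives everything, since it simultaneously controls the running sum $D_n\sum_{i<j}d_i^{-1}\leq 1/10$ at every intermediate stage and forces $D_n\leq d_j/10$ for every $j<n$, supplying exactly the slack needed at both halves of each inductive step.
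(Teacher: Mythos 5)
Your proof is correct and follows essentially the same strategy as the paper: a joint induction on the iterate $j$, simultaneously controlling distortion and shadowing, with the definition of $D_n$ supplying exactly the slack $D_n\sum_{i<n}d_i^{-1}=1/10$ and $D_n\leq d_j/10$ that closes both halves of the inductive step. The only cosmetic difference is that the paper packages the induction hypothesis as a per-step ratio bound (its inequality (\ref{disteq})) which then telescopes, whereas you store the cumulative distortion directly; the content and constants are interchangeable.
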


\begin{proof}
 The first inequality would hold
if for every $0\leq j \leq n-1$ we have
\begin{equation}\label{disteq}
0\notin f^jI,\ \ \  \frac{|f^jI|}{ d(0,f^jI)}\leq \log 2 \cdot
d_j^{-1} \left[\sum_{i=0}^{n-1}d_i^{-1}\right]^{-1}.
\end{equation}
Indeed, if this is the case, then for $x,y \in I$,
\begin{align*}
\log\frac{Df^n(x)}{Df^n(y)}&\leq\sum_{j=0}^{n-1} \log
\frac{Df(f^{j}x)}{Df(f^{j}y)}
\leq\sum_{j=0}^{n-1}\frac{|f^jI|}{d(0,f^jI)}\leq\log 2.
\end{align*}
The second inequality follows from $|Df(x)|=2a|x|$ and $|D^2f(x)|=2a$.

 It is immediate to
check (\ref{disteq}) for $j=0$. The rest of the proof is
 by induction on $j$. Let $k>0$ and assume
(\ref{disteq}) for every $0\leq j < k$. Summing (\ref{disteq}) over
all $0\leq j<k$ implies $|Df^k(x)|\leq2|Df^k(y)|$ for all $x,y\in I$. Hence
\begin{equation}\label{derivative6}
|f^{k}I|\leq 2 |Df^{k}(c_0)| D_n = 2|f^kc_0|d_{k}^{-1}D_n\leq
(1/5)|c_k|,
\end{equation}
and thus $0\notin f^k I$ holds. For the second half of
(\ref{disteq}) we have
\begin{align*}
\frac{|f^kI|}{d(0,f^kI)}&\leq  \frac{2|Df^k(c_0)| D_n}{
d(0,f^kI)}=\frac{2d_k^{-1}D_n \cdot |
c_k|}{d(0,f^kI)}\leq\frac{3}{10}d_{k}^{-1} \left[\sum_{i=0}^{n-1}
d_i^{-1}\right]^{-1}.
\end{align*}
For the last inequality we have used $\frac{|c_k|}{d(0,f^kI)}\leq
3/2$ which follows from (\ref{derivative6}).

 For $0\leq
i<n$ we have $\frac{|c_i|}{d(0,f^i[x,y])}\leq 3/2$ and
$|f^i[x,y]|\leq
2|Df^i(c_0)||x-y|d_id_i^{-1}= 2|x-y||c_i| d_i^{-1},$ and thus
$\frac{|f^i[x,y]|}{d(0,f^i[x,y])}\leq 3|x-y|
d_i^{-1}.$ Therefore
\begin{align*}
\log\frac{Df^n(x)}{Df^n(y)}&\leq\sum_{i=0}^{n-1}
\frac{|f^i[x,y]|}{d(0,f^i[x,y])}\leq \frac{3}{10}\frac{|x-y|}{D_n}\leq\frac{3}{10}.
\end{align*}
The second inequality of the lemma follows from
the fact that $e^z\leq 1+2z$ for $0\leq z\leq 3/10$.
\end{proof}

\subsection{Recovering expansion}\label{recovery}
For $p\geq1$, let
$\delta_p=\sqrt{e^{-\varepsilon p}D_{p}}$. 
 Here, $\varepsilon>0$ is the small constant in the
statement of the proposition. 
Let $\hat\delta=\delta_{10}$.
The proof of the next lemma is a slight modification of that of \cite[Lem. 1]{BC85},
 and hence it is omitted; the next lemma ensures an exponential growth of derivatives
outside of $(-\hat\delta,\hat\delta)$.
\begin{lemma}\label{exp}
If $f=f_a$, $x\in X$, $n\geq1$
are such that $|f^ix|\geq\hat\delta$ for every $0\leq i\leq n-1$, then
$|Df^{n}(x)| \geq \hat\delta e^{\lambda n }$. Moreover, if
$|f^nx|<\hat\delta$ then $|Df^{n}(x)| \geq e^{\lambda n
}$.\end{lemma}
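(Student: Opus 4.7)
My plan is to follow the template of \cite{BC85}, Lem.\ 1 with the present threshold $\hat\delta=\delta_{10}$. The strategy is a bounded--distortion--plus--binding argument: translate the derivative estimate $|Df^n(x)|$ into an image--to--pre-image length ratio via Lemma \ref{dist}, lower-bound the image length using the assumption that the orbit of $x$ avoids the critical neighborhood, and use (A2) to propagate expansion at rate $\lambda$ through a comparison with the critical orbit.

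Concretely, let $\omega\ni x$ be the maximal open interval such that $f^j\omega\subset X\setminus(-\hat\delta/2,\hat\delta/2)$ for every $0\leq j\leq n-1$; after checking that $|\omega|\leq D_n$, Lemma \ref{dist} yields $|Df^n(x)|\asymp|f^n\omega|/|\omega|$. By maximality, at some $j_0\leq n-1$ an endpoint of $f^{j_0}\omega$ lies on $\{\pm\hat\delta/2\}$, and since $|f^{j_0}x|\geq\hat\delta$ this forces $|f^{j_0}\omega|\geq\hat\delta/2$. An inductive application of the lemma (in $n$) up to time $j_0$ gives $|\omega|\lesssim e^{-\lambda j_0}$, while binding the endpoint of $f^{j_0}\omega$ near $\pm\hat\delta/2$ to the critical orbit and invoking (A2) gives $|f^n\omega|\gtrsim\hat\delta\, e^{\lambda(n-j_0)}$. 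Dividing yields $|Df^n(x)|\gtrsim\hat\delta\, e^{\lambda n}$.

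The moreover clause comes from a sharper lower bound on $|f^n\omega|$. When $|f^nx|<\hat\delta$, the image $f^n\omega$ straddles the critical region: because it contains both $f^nx\in(-\hat\delta,\hat\delta)$ and the image under $f$ of an endpoint of $f^{n-1}\omega$ located at $\pm\hat\delta/2$, and since $f(\pm\hat\delta/2)=1-a\hat\delta^2/4$ is within $O(\hat\delta^2)$ of $1$, we obtain $|f^n\omega|\geq 1-O(\hat\delta)\gtrsim 1$ rather than merely $\hat\delta$. Dividing by the same upper bound on $|\omega|$ absorbs the leading $\hat\delta$ factor and yields $|Df^n(x)|\geq e^{\lambda n}$.

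The main obstacle I foresee is the careful bookkeeping of constants needed to extract exactly the rate $\lambda=\tfrac{9}{10}\log 2$. The bounded-distortion constant (a factor of $2$ in Lemma \ref{dist}) and the losses incurred when chaining bindings over multiple approaches to the critical neighborhood must be absorbed into the $10\%$ slack encoded in the inequality $\tfrac{9}{10}<1$ relative to the maximal rate $\log 2$. This is where the smallness of $\varepsilon$ is crucial: it ensures that $\hat\delta=\sqrt{e^{-10\varepsilon}D_{10}}$ is small enough that these corrections are uniformly dominated by the available slack, so the proof reduces, up to constants, to a line-by-line transcription of \cite{BC85}, Lem.\ 1.
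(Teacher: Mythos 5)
The paper omits the proof of Lemma~\ref{exp}, citing it as ``a slight modification of \cite[Lem.\ 1]{BC85}''; so there is no in-paper argument to compare against. Your proposal attempts a self-contained proof via bounded distortion on a maximal interval $\omega$, but it has several genuine gaps.

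First, Lemma~\ref{dist} is stated only for the interval $I=[1-D_n,1]$ adjacent to the critical value $c_0$, and its proof (the inequalities~(\ref{disteq}),~(\ref{derivative6})) makes essential use of that location to control $d(0,f^jI)$ via $|c_j|$. It does not give ``$|Df^n(x)|\asymp|f^n\omega|/|\omega|$'' for an arbitrary interval $\omega$ containing $x$. To control distortion on $\omega$ you would need a separate Koebe-type estimate, which requires $\sum_j|f^j\omega|/d(0,f^j\omega)$ to be bounded; since $d(0,f^j\omega)\geq\hat\delta/2$ only gives a $\hat\delta^{-1}$-factor, one is then thrown back onto bounding $\sum_j|f^j\omega|$ --- which needs exactly the expansion being proved. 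So the step is circular as written.

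Second, and more fundamental, your sketch does not account for where the specific rate $\lambda=\frac{9}{10}\log2$ comes from. Chaining a binding argument with (A2) through Lemma~\ref{dist}, as you describe, is essentially the mechanism of Lemma~\ref{exp2}, and there the unavoidable losses only produce the rate $\lambda/3$ (see Lemma~\ref{reclem1}(a) and the chain in~(\ref{sequence})). Lemma~\ref{exp} claims the full rate $\lambda$, and this requires the specific structure of $f_a$ for $a$ near $2$ (the tent-map conjugacy at $a=2$ and a perturbation), which is the actual content of \cite[Lem.\ 1]{BC85}. Calling this ``careful bookkeeping'' and claiming the argument ``reduces, up to constants, to a line-by-line transcription of \cite{BC85}, Lem.\ 1'' is misleading: the route you describe is different from theirs, and it does not recover their rate.

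Third, your derivation of the ``moreover'' clause implicitly assumes that the boundary time $j_0$ at which an endpoint of $f^{j_0}\omega$ hits $\pm\hat\delta/2$ equals $n-1$, so that $f^n\omega$ straddles $(-\hat\delta,\hat\delta)$ and reaches to within $O(\hat\delta^2)$ of $1$. In general $j_0$ can be much smaller than $n-1$, and after the bound period the orbit of that endpoint need not be near $1$ at time $n$; the claim $|f^n\omega|\gtrsim1$ then does not follow.

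In short, the overall shape (maximal interval, binding at the boundary, induction in $n$) is reasonable and in the spirit of the other expansion lemmas in the paper, but as presented the argument misapplies Lemma~\ref{dist}, has an unresolved circularity in the distortion control, treats the source of the rate $\lambda$ as a triviality when it is the crux, and overreaches in the ``moreover'' step.
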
 
 To deal with the loss of expansion due
to returns to $(-\hat\delta,\hat\delta)$, we
mimic the binding argument of Benedicks $\&$ Carleson
\cite{BC85,BC91}: subdivide the interval into pieces,
and deal with them independently.
Key ingredients are the notion of binding and an associated
expansion estimate. We develop them in a slightly different way
from \cite{BC85,BC91} for our purpose.


\begin{lemma}\label{reclem1}
If $p>10$ and $\delta_p\leq|x|<\delta_{p-1}$ then:
\begin{itemize}
\item[(a)]
$|Df^{p}(x)|\geq e^{\frac{\lambda}{3}p}$;
\item[(b)] $p\leq\log
|x|^{-\frac{2}{\lambda}}$.
\end{itemize}\end{lemma}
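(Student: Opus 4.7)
The plan is to dispatch (b) from (A2) alone, and then prove (a) by combining a standard binding argument (using Lemma~\ref{dist}) with the slow-recurrence condition (A3).

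For (b), I would start from $|x|<\delta_{p-1}=\sqrt{e^{-\varepsilon(p-1)}D_{p-1}}$ and bound $D_{p-1}$ from above. Using $|c_i|\leq 1$ and (A2), one has $d_i^{-1}\geq e^{\lambda i}$, so summing the geometric series gives $\sum_{i=0}^{p-2}d_i^{-1}\geq e^{\lambda(p-1)}/(2(e^\lambda-1))$ and hence $D_{p-1}\leq (e^\lambda-1)e^{-\lambda(p-1)}/5$. Plugging into the definition of $\delta_{p-1}^2$, and using $\lambda=\tfrac{9}{10}\log 2$ so that the absolute constant $\sqrt{(e^\lambda-1)/5}\cdot e^{\lambda/2}<1$, one concludes $\delta_{p-1}<e^{-\lambda p/2}$ for $p>10$; since $|x|<\delta_{p-1}$, this rearranges to $p<\log|x|^{-2/\lambda}$.

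For (a), I would write $|Df^p(x)|=2a|x|\cdot|Df^{p-1}(fx)|$ and estimate each factor. The first factor is $2a|x|\geq 2a\delta_p=2a e^{-\varepsilon p/2}\sqrt{D_p}$. For the second, observe that $|fx-c_0|=ax^2\leq a\delta_{p-1}^2=ae^{-\varepsilon(p-1)}D_{p-1}\leq 2D_{p-1}$, and apply Lemma~\ref{dist} (directly when the interval $[fx,c_0]$ fits inside $[1-D_{p-1},1]$, and otherwise via a mild extension to slightly larger intervals at the cost of a worse absolute constant) to obtain $|Df^{p-1}(fx)|\geq C_0\,|Df^{p-1}(c_0)|\geq C_0\,e^{\lambda(p-1)}$ by (A2). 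These combine to give $|Df^p(x)|\geq C_1\,\delta_p\,e^{\lambda(p-1)}$.

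The heart of the proof is then a sharp enough lower bound on $\delta_p$, equivalently an upper bound on $\sum_{i=0}^{p-1}d_i^{-1}$. Here I would invoke Lemma~\ref{exp} applied at the intermediate critical-orbit points $c_i$: as long as the subsequent iterates remain outside $(-\hat\delta,\hat\delta)$, which (A3) guarantees up to a sub-exponential penalty $e^{\alpha\sqrt p}$, one has $|Df^{p-1-i}(c_i)|\geq\hat\delta\,e^{\lambda(p-1-i)}$, and hence $|Df^i(c_0)|\leq\hat\delta^{-1}|Df^{p-1}(c_0)|\,e^{-\lambda(p-1-i)}$. Combined with $|c_i|^{-1}\leq e^{\alpha\sqrt i}$ from (A3), this yields $\sum d_i^{-1}\leq C\,e^{\alpha\sqrt p}\,|Df^{p-1}(c_0)|$. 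Substituting back and using (A2) once more,
\[|Df^p(x)|\geq C'\,e^{\lambda(p-1)/2\,-\,\varepsilon p/2\,-\,\alpha\sqrt p/2},\]
which exceeds $e^{\lambda p/3}$ once $p(\lambda/6-\varepsilon/2)\geq\lambda/2+\alpha\sqrt p/2+O(1)$; this holds for $p>10$ provided $\varepsilon$ is small (the regime the paper is set in, after fixing $\varepsilon$ and choosing $N$ large).

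The principal obstacle is the accounting of the intermediate derivatives $|Df^i(c_0)|$ for $i<p-1$: only (A2) supplies a lower bound, and the trivial upper bound $(2a)^i$ is far too lossy to deliver the target rate $\lambda/3$. The slow-recurrence condition (A3) is precisely what permits the sharper comparison $|Df^i(c_0)|\lesssim |Df^{p-1}(c_0)|\,e^{-\lambda(p-1-i)}\cdot e^{\alpha\sqrt i}$, with only a sub-exponential loss that is absorbed by the slack $\lambda/6=\lambda/2-\lambda/3$. Packaging this comparison cleanly, and checking that the implicit constants fit into the $p>10$ threshold for all admissible $\varepsilon$, is the delicate point.
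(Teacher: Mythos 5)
Your proof of part (b) is correct and essentially the paper's argument: the paper keeps only the single term $d_{p-2}^{-1}$ in the sum defining $D_{p-1}$ instead of the geometric series, but both routes give $|x|^{2}\le D_{p-1}\lesssim e^{-\lambda(p-2)}$ and hence $|x|\le e^{-\lambda p/2}$, which rearranges to the stated bound on $p$.

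For part (a) your overall shape — bounded distortion gives $|Df^{p}(x)|\gtrsim \delta_p\,|Df^{p-1}(c_0)|$, then lower-bound $\delta_p$ — is the same as the paper's, but your route to the lower bound on $\delta_p$ has a genuine gap, and it is not the paper's route. You propose to control $\sum_{i=0}^{p-1}d_i^{-1}$ by lower-bounding $|Df^{p-1-i}(c_i)|$ via Lemma~\ref{exp}, claiming (A3) keeps the orbit out of $(-\hat\delta,\hat\delta)$ "up to a sub-exponential penalty." But (A3) does not prevent $c_n$ from re-entering $(-\hat\delta,\hat\delta)$; it only forbids $|c_n|<e^{-\alpha\sqrt n}$, and $e^{-\alpha\sqrt n}\ll\hat\delta$ for $n$ large. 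So Lemma~\ref{exp} alone cannot supply $|Df^{p-1-i}(c_i)|\gtrsim e^{\lambda(p-1-i)}$; one needs the full binding argument, and the uniform estimate you want is exactly Lemma~\ref{reclem2}, whose proof in the paper invokes Lemma~\ref{reclem1}(a) through \eqref{sequence}. As written, your argument is therefore circular unless you run a genuine simultaneous induction on $p$, which you do not set up. There is also a quantitative concern: the $\hat\delta$-prefactor in Lemma~\ref{exp} enters your constant $C'$ as $\hat\delta^{-1/2}$, and your final inequality $C'e^{\lambda(p-1)/2-\varepsilon p/2-\alpha\sqrt p/2}\ge e^{\lambda p/3}$ then fails for $p$ close to $10$, which is exactly the range the lemma must cover (the bound periods $p_k$ that feed into \eqref{sequence} can be as small as $11$). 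The paper instead derives $|Df^{p-1}(c_0)|\delta_p\ge |Df^{p-1}(c_0)|^{2/5}$ directly from (A2) and $p>10$, with no appeal to derivative estimates along the critical orbit downstream of this lemma, and then the elementary arithmetic $\tfrac{2}{5}(p-1)\ge\tfrac{p}{3}$ for $p\ge 6$ fixes the exponent $\lambda/3$. Your write-up should either reproduce that self-contained estimate on $\delta_p$ or set up and justify the induction you are implicitly relying on.
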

\begin{proof}
 We have
\begin{align*} |Df^{p}(x)| &=
|Df^{p-1}(fx)|  |Df(x)|\geq
|Df^{p-1}(c_0)||x|\geq |Df^{p-1}(c_0)|  \delta_p \\
&\geq |Df^{p-1}(c_0)|^{\frac{2}{5}}\geq e^{\frac{\lambda}{3}p},
\end{align*}
where the first inequality follows from the bounded distortion in Lemma \ref{dist}.
For the last two inequalities we have used (A2) and $p>10$.
We also have \[\label{p} |x|^2\leq D_{p-1}\leq
(1/10)d_{p-2}\leq (1/10)|Df^{p-2}(c_0)|^{-1} \leq
(1/10)e^{-\lambda(p-2)}\leq e^{-\lambda p}.
\]
This yields
the upper estimate of $p$.
\end{proof}

In the following two lemmas, for $x\in X$ we consider a sequence
$$0\leq n_1(x)<n_1(x)+p_1(x)\leq n_2(x)<n_2(x)+p_2(x)\leq\cdots$$
of integers that is defined as follows:
$n_1=
\min\{n\geq 0\colon |f^nx|<\hat\delta\}$. Given $n_k$, define $p_k$, $n_{k+1}$ by 
$\delta_{p_k}\leq |f^{n_k}x|<\delta_{p_k-1}$ and
 $n_{k+1}=
\{n\geq n_k+p_k\colon
|f^nx|<\hat\delta\}$.
Lemma \ref{exp}
and Lemma
\ref{reclem1}(a) yield
\begin{equation}\label{sequence}|Df^{n_{k+1}-n_k-p_k}(f^{n_k+p_k}x)|\geq
e^{\lambda(n_{k+1}-n_k-p_k)}\ \ \text{and} \ \
|Df^{p_k}(f^{n_k}x)|\geq e^{\frac{\lambda}{3} p_k}.\end{equation}

\begin{lemma}\label{reclem2}
For all $0\leq
i<j,$
$|Df^{j-i}(c_i)|\geq e^{-\alpha\sqrt{j}}.$
\end{lemma}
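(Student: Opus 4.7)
The plan is to adapt the return-time decomposition $(n_k,p_k)$ from just before the lemma to the orbit of $c_i$, partitioning $[i,j)$ into alternating free and bound subintervals. By construction at most one subinterval is ``partial'': either at the start (when $|c_i|<\hat\delta$ and $j$ lies inside the initial bound period), or at the end (when $j$ lies inside a bound period that started strictly after $i$); in all other intermediate cases the subintervals are complete.

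On each complete free piece of length $L$, Lemma \ref{exp} provides expansion $\ge e^{\lambda L}$; on each complete bound piece of depth $p$, the second half of (\ref{sequence}), equivalently Lemma \ref{reclem1}(a), yields $\ge e^{\lambda p/3}$. For the possible partial bound piece I would use the binding argument underlying Lemma \ref{reclem1}: at a close return time $m$ with $\delta_p \le |c_m| < \delta_{p-1}$, the bound-period shadowing makes $|Df^{s-1}(c_{m+1})|$ comparable to $|Df^{s-1}(c_0)| \ge e^{\lambda(s-1)}$ (through (A2) and bounded distortion in the spirit of Lemma \ref{dist}), so that $|Df^s(c_m)| \ge C'|c_m|\,e^{\lambda(s-1)}$ for an absolute constant $C'>0$. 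At this point I would invoke (A3) with $m \le j-1$ to obtain $|c_m| \ge e^{-\alpha\sqrt{m+1}} \ge e^{-\alpha\sqrt{j}}$, so the partial piece contributes at least $C'\,e^{-\alpha\sqrt{j}}\,e^{\lambda(s-1)}$.

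Multiplying the contributions from every subinterval gives $|Df^{j-i}(c_i)| \ge C\,e^{-\alpha\sqrt{j}}$ for some absolute $C>0$, with the $e^{-\alpha\sqrt{j}}$ carried by the lone partial piece and every other factor exceeding $1$. The main obstacle is bookkeeping: one must ensure that the implicit constant $C$ does not worsen the exponent, so that the bound with the prescribed $\alpha=1/100$ goes through as stated. This is arranged by using that $a$ is sufficiently close to $2$ (so factors $2a$ are favorable), that the distortion constants from the binding argument are absolute, and that whenever $j-i\ge 2$ at least one complete expansion factor absorbs the remaining slack; the case $j-i=1$ is handled directly from (A3) by $|Df(c_i)| = 2a|c_i| \ge 2a\,e^{-\alpha\sqrt{i+1}} \ge e^{-\alpha\sqrt{j}}$.
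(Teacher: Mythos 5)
Your decomposition and your treatment of the terminal bound segment follow the paper's route, but the case analysis has a genuine hole. You assert that the only possible partial piece is a bound piece, and that ``every other factor exceeds $1$.'' This misses the case where time $j$ falls inside a \emph{free} segment: either the orbit $c_i,\dots,c_{j-1}$ never enters $(-\hat\delta,\hat\delta)$, or the last bound period ends before $j$ and no further close return occurs before time $j$. There Lemma~\ref{exp} applies only in its weaker form, and one gets $|Df^{j-i}(c_i)|\ge\hat\delta\,e^{\frac{\lambda}{3}(j-i)}$ (resp.\ $\hat\delta\,e^{\lambda(j-i)}$ when there is no close return at all). The prefactor $\hat\delta<1$ is emphatically not a factor exceeding $1$, and for moderate $j$ and $j-i$ it is far below the target $e^{-\alpha\sqrt j}$, which is close to $1$; the ``favorable $2a$'' factors and absolute distortion constants you invoke cannot absorb $\hat\delta$, a fixed small quantity coming from the preliminary constructions.

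The paper closes this hole with two choices that you leave out. It fixes an integer $M$ with $\hat\delta\,e^{\alpha\sqrt M}\ge1$ and takes $a_0$ so near $2$ that $|Df(c_i)|\ge 3.5$ for all $0\le i<M$, exploiting that for $a=2$ the critical orbit is $1,-1,-1,\dots$. This does two jobs at once. For $j\le M$ one has $|Df^{j-i}(c_i)|\ge 3.5^{\,j-i}\ge1$ outright; and since $|c_i|\ge 3.5/(2a)\gg\hat\delta$ for $i<M$, any close return to $(-\hat\delta,\hat\delta)$ forces $j>M$, whence $\hat\delta\,e^{\alpha\sqrt j}\ge\hat\delta\,e^{\alpha\sqrt M}\ge1$ and the chain $\hat\delta\,e^{\frac{\lambda}{3}(j-i)}\ge\hat\delta\,e^{\alpha\sqrt{j-i}}\ge\hat\delta\,e^{\alpha(\sqrt j-\sqrt i)}\ge e^{-\alpha\sqrt i}\ge e^{-\alpha\sqrt j}$ finishes both the no-close-return and the terminal-free-segment cases. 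Your bookkeeping concern about the terminal bound piece, by contrast, is not really an obstacle: factoring $|Df^{j-i}(c_i)|=|Df^{n_k-i}(c_i)|\cdot 2a|c_{n_k}|\cdot|Df^{j-n_k-1}(c_{n_k+1})|$ as the paper does and noting $2a\cdot\tfrac12\cdot e^{-\lambda}=a\,e^{-\lambda}\ge1$ (since $\lambda=\tfrac{9}{10}\log2$ and $a$ is near $2$) leaves no spare constant to worry about.
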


\begin{proof}
Fix an integer $M$ such that
$\hat\delta e^{\alpha\sqrt{M}}\geq1$. Fix $a_0$ sufficiently near
$2$ such that $|Df(c_i)|\geq 3.5$ for every $0\leq i<M$, and the conclusion
of Lemma \ref{exp} holds for all $a\in[a_0,2]$.
We first consider the case where $|c_n|\geq\hat\delta$ for every $i\leq n\leq j-1$. If
$j\leq M$, then the choice of $a_0$ ensures $|Df^{j-i}(c_i)|\geq
(3.5)^{j-i}$, which is stronger than what is asserted. If $j> M$,
then by Lemma \ref{exp} and $\sqrt{j-i}\geq\sqrt{j}-\sqrt{i}$,
$$|Df^{j-i}(c_i)|\geq\hat\delta e^{\lambda(j-i)}\geq\hat\delta e^{\alpha\sqrt{j-i}}\geq\hat\delta e^{\alpha\sqrt{j}-\alpha\sqrt{i}}\geq\hat\delta e^{\alpha\sqrt{M}-\alpha\sqrt{i}}\geq e^{-\alpha\sqrt{i}}.$$

In the case where
$|c_n|<\hat\delta$ for some $i\leq n\leq j-1$,
consider the sequence $\{n_k,p_k\}_{k\geq1}$
for the orbit of $c_i$. 
If $n_k+p_k\leq j\leq n_{k+1}$ for some $k$, then
\eqref{sequence} yields
$|Df^{j-i}(c_i)|\geq\hat\delta e^{\frac{\lambda}{3}(j-i)}$,
which is $\geq e^{-\alpha\sqrt{i}}$ as proved in the first case.
If $n_k+1\leq j \leq n_k+p_k$ for some $k$, then
we have
\begin{align*}
|Df^{j-i}(c_i)|&=|Df^{n_k-i}(c_i)|\cdot|Df(c_{n_k})|\cdot|Df^{j-n_k-1}(c_{n_k+1})| \\
&\geq
e^{\frac{\lambda}{3}(n_k-i)}\cdot 2a|c_{n_k}|
\cdot 2^{-1} e^{\lambda(j-n_k)}
\geq e^{\frac{\lambda}{3}(j-i)- \alpha\sqrt{n_k}}\geq e^{-\alpha\sqrt{j}}.
\qedhere
\end{align*} 
\end{proof}
Let $N$ be a large integer, and set $\delta=\delta_N\ll\hat\delta$.
The next lemma on the growth of derivatives outside 
of $(-\delta,\delta)$  will be used to construct induced maps with arbitrarily small scale.
\begin{lemma}\label{exp2}
The following holds for all sufficiently large $N$:
if $x\in X$, $n\geq1$ are such that $|f^ix|\geq\delta$ for every $0\leq i\leq n-1$,
then $|Df^{n}(x)|\geq \delta
e^{\frac{\lambda}{3} n}$. Moreover, if $|f^nx|<\delta$ then
$|Df^{n}(x)|\geq e^{\frac{\lambda}{3} n}$.
\end{lemma}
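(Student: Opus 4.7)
The plan is to extend Lemma \ref{exp}, which gives exponential expansion outside $(-\hat\delta,\hat\delta)$, to the smaller critical neighborhood $(-\delta,\delta)$ by interleaving its estimate with the binding estimate of Lemma \ref{reclem1}(a). Given $x$ and $n$ as in the hypothesis, I would use the sequence $\{n_k,p_k\}_{k\geq 1}$ of returns to $(-\hat\delta,\hat\delta)$ and their associated binding periods, defined just before Lemma \ref{reclem2}. The crucial observation that makes the decomposition terminate cleanly is that the hypothesis $|f^{n_k}x|\geq \delta=\delta_N$ combined with the defining relation $\delta_{p_k}\leq |f^{n_k}x|$ forces $p_k\leq N$, because $p\mapsto\delta_p$ is decreasing.

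In the generic case where $n$ falls in a free segment, i.e.\ $n_K+p_K\leq n$ with no further return before $n$, the orbit segment $[0,n-1]$ splits into alternating free blocks $[n_k+p_k,n_{k+1})$, on which the second assertion of Lemma \ref{exp} applies (no $\hat\delta$ factor at a dip), and binding blocks $[n_k,n_k+p_k)$, on which Lemma \ref{reclem1}(a) gives growth rate $\lambda/3$. The final free tail $[n_K+p_K,n)$ is handled by the first assertion of Lemma \ref{exp}, producing a single factor $\hat\delta$. Multiplying via \eqref{sequence} and writing $P=\sum_k p_k\leq n$ for the total binding length yields
\[
|Df^n(x)|\geq \hat\delta\cdot e^{\lambda(n-P)+\lambda P/3}\geq \hat\delta\cdot e^{\lambda n/3}\geq \delta\cdot e^{\lambda n/3}.
\]
For the ``moreover'' clause, $|f^n x|<\delta<\hat\delta$ means that the last free block terminates in a dip, and the second part of Lemma \ref{exp} removes the $\hat\delta$ factor, giving $|Df^n(x)|\geq e^{\lambda n/3}$.

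The main obstacle is the edge case $n_K\leq n<n_K+p_K$, when the last binding is incomplete and Lemma \ref{reclem1}(a) does not directly apply. Here I would split $|Df^n(x)|=|Df^{n_K}(x)|\cdot|Df^{n-n_K}(f^{n_K}x)|$. The first factor is controlled by the clean decomposition up to $n_K$, which is itself a return, placing us in the ``moreover'' situation and yielding $|Df^{n_K}(x)|\geq e^{\lambda n_K/3}$. For the second factor I would invoke bounded distortion (Lemma \ref{dist}) to shadow the orbit of $f^{n_K+1}x$ by that of $c_0$ for $n-n_K-1$ steps, so that
\[
|Df^{n-n_K}(f^{n_K}x)|\geq \tfrac{1}{2}|Df(f^{n_K}x)|\cdot|Df^{n-n_K-1}(c_0)|\geq a\delta\cdot e^{\lambda(n-n_K-1)}
\]
by (A2) and $|f^{n_K}x|\geq\delta$. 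This is exactly where the prefactor $\delta$ in the conclusion appears. Combining the two factors and noting $\lambda n_K/3+\lambda(n-n_K)\geq\lambda n/3$ gives $|Df^n(x)|\geq \delta\cdot e^{\lambda n/3}$, with any absolute constant absorbed by taking $N$ sufficiently large. The ``moreover'' variant $|f^n x|<\delta$ is analogous, since in that case the tail can be incorporated into the next binding and the $\delta$ factor survives on the left of the inequality rather than the right.
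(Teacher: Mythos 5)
Your decomposition for the first estimate and for the ``moreover'' clause in the generic case ($n_K+p_K\leq n$) is essentially the paper's argument, and your edge-case treatment ($n_K\leq n<n_K+p_K$) correctly reproduces the paper's shadowing step $|Df^{n-n_K}(f^{n_K}x)|\geq\tfrac12|Df(f^{n_K}x)||Df^{n-n_K-1}(c_0)|\geq a\delta e^{\lambda(n-n_K-1)}$ to produce the $\delta$ prefactor.

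However, there is a genuine gap in the last sentence, where you claim the ``moreover'' variant of the edge case is ``analogous'' and that ``the $\delta$ factor survives on the left of the inequality.'' If $n_K<n<n_K+p_K$ and you simply redo the edge-case calculation, you still get $|Df^n(x)|\geq e^{\lambda n_K/3}\cdot a\delta e^{\lambda(n-n_K-1)}$, which is only $\delta e^{\lambda n/3}$ --- the factor $\delta$ does not disappear, and for $n-n_K$ small (say $n=n_K+1$) there is no slack to absorb it. The actual point, which the paper proves and which your sketch never supplies, is that the hypothesis $|f^nx|<\delta$ is \emph{incompatible} with being strictly inside a bound period: by the bounded distortion (Lemma~\ref{dist}) together with the definition of $D_p$, one has $|f^ix-c_{i-n_K-1}|\leq(1/5)|c_{i-n_K-1}|$ during the bound period, hence $|f^ix|\geq(4/5)|c_{i-n_K-1}|\geq(4/5)e^{-\alpha\sqrt{p_K}}$ by (A3); then Lemma~\ref{reclem1}(b) bounds $p_K$ in terms of $\log|f^{n_K}x|^{-1}$, and since $\sqrt{\cdot}$ grows slower than the identity, this quantity strictly exceeds $|f^{n_K}x|\geq\delta$. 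So $|f^ix|>\delta$ throughout $n_K<i\leq n_K+p_K$, forcing $n\geq n_K+p_K$ whenever $|f^nx|<\delta$ (or $n=n_K$, in which case $|Df^n(x)|=|Df^{n_K}(x)|\geq e^{\lambda n_K/3}$ already). Without this shadowing-plus-(A3) argument, your proof of the ``moreover'' clause does not close. Incidentally, the observation $p_k\leq N$ that you announce as ``the crucial observation'' is correct but plays no role in either your argument or the paper's, and in particular does not help eliminate the $\delta$ factor in the problematic case.
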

\begin{proof}
For the orbit of $x$ consider the sequence $\{n_k,p_k\}_{k\geq1}$
and let $s$ be such that $n_s\leq n<n_{s+1}$.
\eqref{sequence} yields $|Df^{n_s}(x)|\geq e^{\frac{\lambda}{3}
n_s}$. If $n_s+p_s> n$, then Lemma \ref{dist} yields
$|Df^{n-n_s}(f^{n_s}x)|\geq(1/2) |Df(f^{n_s}x)||Df^{n-n_s-1}(c_0)|\geq a\delta
e^{\lambda(n-n_s-1)}\geq \delta e^{\frac{\lambda}{3}(n-n_s)}.$ If
$n_s+p_s\leq n$, then Lemma \ref{exp} yields
$|Df^{n-n_s}(f^{n_s}x)|\geq \hat\delta 
e^{\frac{\lambda}{3}(n-n_s)}.$ Hence
the first estimate of Lemma \ref{exp2} holds.

Lemma \ref{dist} and the definition \eqref{Theta}
yield
$|f^ix-c_{i-n_s-1}|\leq(1/5)|c_{i-n_s-1}|$ 
 for every $n_s\leq  i\leq n_s+p_s$, and thus
$|f^ix|\geq(4/5)|c_{i-n_s-1}|\geq(4/5)e^{-\alpha\sqrt{p_s}}\geq(4/5)e^{-\alpha
\sqrt{\frac{2}{\lambda}\log|f^{n_s}x|}}>|f^{n_s}x|\geq\delta.$
We have used (A3) for the second inequality and Lemma \ref{reclem1}(b) for the third.
The fourth inequality holds because
$|f^{n_s}x|<\hat\delta\ll1$.
Hence,
if $|f^nx|<\delta$
then $n_s+p_s\leq n$, and so the factor $\hat\delta$ above
can be dropped by
Lemma \ref{exp}.
\end{proof}


\subsection{Inducing to small scales}\label{slow}
For each $p> N$,
divide the interval $[\delta_p,\delta_{p-1})$ into $\left[ e^{3\varepsilon p}\right]$-number of subintervals of
equal length and denote them by $I_{p,j}$ $(j=1,2,\ldots,\left[ e^{3\varepsilon p}\right])$,
from the right to the left.
Let $I_{p,-j}=-I_{p,j}$, which is the mirror image of $I_{p,j}$ with respect to $0$.
\begin{lemma}\label{holder}
If $N$ is sufficiently large, then for every $I_{p,j}$
the following holds:
\begin{itemize}
\item[(a)] $|f^pI_{p,j}|\geq e^{-5\varepsilon p}$;
\item[(b)] $|I_{p,j}|\leq d(0,I_{p,j})^{1+\frac{\varepsilon}{3}}$;
\item[(c)] for all $x,y\in I_{p,j}$,
$\log\frac{Df^p(x)}{Df^p(y)}\leq |f^px-f^py|^{\varepsilon^2}.$
\end{itemize}
\end{lemma}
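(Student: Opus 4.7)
The plan is to establish (a)--(c) by systematic application of the bounded distortion Lemma \ref{dist} on $[1-D_{p-1},1]$ together with quantitative control over $D_p$ and $\delta_p$ derived from (A2), (A3), and Lemma \ref{reclem2}. Two auxiliary estimates will drive all three parts. First, comparing each $d_i^{-1}$ with $d_{p-1}^{-1}$ via Lemma \ref{reclem2} and (A3) gives $S_p:=\sum_{i=0}^{p-1}d_i^{-1}\leq p\,e^{2\alpha\sqrt{p-1}}d_{p-1}^{-1}$, hence
\[
D_p\,|Df^{p-1}(c_0)|\geq \frac{e^{-3\alpha\sqrt{p-1}}}{10p}.
\]
Second, $D_{p-1}/D_p=1+d_{p-1}^{-1}/S_{p-1}\leq 1+2a|c_{p-2}|^2/|c_{p-1}|\leq 1+2ae^{\alpha\sqrt{p-1}}$, so $\delta_{p-1}/\delta_p\leq e^{C\sqrt p}$ for some constant $C$.

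For (a), I would factor $f^p=f^{p-1}\circ f$. The inclusion $f(I_{p,j})\subset[1-D_{p-1},1]$ (which holds once $ae^{-\varepsilon(p-1)}<1$) combined with Lemma \ref{dist} gives $|f^pI_{p,j}|\geq\tfrac12|Df^{p-1}(c_0)|\,|fI_{p,j}|$, and the mean value theorem yields $|fI_{p,j}|\geq 2a\delta_p|I_{p,j}|$. Since $D_{p-1}\geq D_p$ one has $\delta_{p-1}\geq e^{\varepsilon/2}\delta_p$, hence $|I_{p,j}|\geq(\varepsilon/2)\delta_p e^{-3\varepsilon p}$. Assembling with the first auxiliary estimate produces $|f^pI_{p,j}|\geq (a\varepsilon/20p)\,e^{-4\varepsilon p - 3\alpha\sqrt{p-1}}$, which exceeds $e^{-5\varepsilon p}$ for $p\geq N$ large.

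For (b), from $|I_{p,j}|\leq\delta_{p-1}/e^{3\varepsilon p}$ and $d(0,I_{p,j})\geq\delta_p$, it suffices to verify $\delta_{p-1}/\delta_p\leq e^{3\varepsilon p}\delta_p^{\varepsilon/3}$. The left side is at most $e^{C\sqrt p}$; for the right, combining the first auxiliary estimate with the trivial bound $|Df^{p-1}(c_0)|\leq (2a)^{p-1}$ yields $\delta_p\geq c\,e^{-(\log(2a)+\varepsilon)p/2-O(\sqrt p)}$, so $\delta_p^{\varepsilon/3}\geq e^{-\varepsilon p\log(2a)/6-o(\varepsilon p)}$, making the right side $\geq e^{2\varepsilon p}$ for $p$ large since $\log(2a)/6<3$.

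For (c), decompose $Df^p=Df\cdot(Df^{p-1}\circ f)$. The first factor contributes $\log(Df(x)/Df(y))\leq|x-y|/d(0,I_{p,j})$; Lemma \ref{dist} bounds the second by $2a\delta_{p-1}|x-y|/D_{p-1}$, which is absorbed by the first since $\delta_{p-1}/D_{p-1}=1/(\delta_{p-1}e^{\varepsilon(p-1)})\ll 1/d(0,I_{p,j})$ for $p$ large. Setting $L:=|f^px-f^py|$ and using $|Df^p(\xi)|\geq a\,d(0,I_{p,j})|Df^{p-1}(c_0)|$ to convert yields $\log(Df^p(x)/Df^p(y))\leq KL$ with $K=2/(a\,d(0,I_{p,j})^2|Df^{p-1}(c_0)|)$. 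The H\"older inequality $KL\leq L^{\varepsilon^2}$ reduces to $KL_{\max}^{1-\varepsilon^2}\leq 1$, where $L_{\max}=|f^pI_{p,j}|\leq 4a\delta_{p-1}|Df^{p-1}(c_0)||I_{p,j}|$; after substituting both auxiliary estimates together with $|I_{p,j}|\leq\delta_{p-1}/e^{3\varepsilon p}$, the resulting expression has exponent of order $-3\varepsilon p(1-\varepsilon^2)+O(\sqrt p)+O(\varepsilon^3 p)$, which tends to $-\infty$. The hard part will be precisely this last step: because $\varepsilon^2$ is tiny, the trivial bound $L_{\max}\leq 2$ is hopelessly weak, and one must use the sharper bound $L_{\max}\leq 4a\delta_{p-1}|Df^{p-1}(c_0)||I_{p,j}|$; verifying that the partition size $[e^{3\varepsilon p}]$ has been calibrated precisely so that this balance tilts favorably is the crux of the argument.
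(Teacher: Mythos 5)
Your proposal is correct. Parts (a) and (b) follow essentially the same route as the paper: for (a), both factor $f^p=f^{p-1}\circ f$, apply Lemma \ref{dist} on $fI_{p,j}\subset[1-D_{p-1},1]$, bound $|I_{p,j}|$ from below by $(\delta_{p-1}-\delta_p)e^{-3\varepsilon p}$ (via $\delta_{p-1}\geq e^{\varepsilon/2}\delta_p$), and close with the estimate $D_p\,|Df^{p-1}(c_0)|\geq e^{-O(\alpha\sqrt p)}/(10p)$ coming from (A3) and Lemma \ref{reclem2}; for (b) both reduce to controlling $\delta_{p-1}/\delta_p$ by a sub\-exponential factor, though you make explicit the lower bound on $\delta_p$ that the paper only uses implicitly (in the form $e^{-\varepsilon p}\leq 10^{-\varepsilon p/3}\leq\delta_p^{\varepsilon/3}$).

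For (c) your organization genuinely differs: the paper estimates $|Df^p(x)-Df^p(y)|$ as a sum $I+II$, applies (b) to trade each power of $|x-y|$ for a $d(0,I_{p,j})$-factor and a residual $|x-y|^{\varepsilon/(3+\varepsilon)}$, and converts to $|f^px-f^py|$ only at the final step; you instead convert immediately, writing $\log\frac{Df^p(x)}{Df^p(y)}\leq KL$ with $L=|f^px-f^py|$ and $K=O\bigl(d(0,I_{p,j})^{-2}|Df^{p-1}(c_0)|^{-1}\bigr)$, and reduce the H\"older bound to the single inequality $KL_{\max}^{1-\varepsilon^2}\leq1$ with $L_{\max}\leq 4a\delta_{p-1}|Df^{p-1}(c_0)|\,|I_{p,j}|$. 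This bypasses (b) entirely and lays bare why the calibration $[e^{3\varepsilon p}]$ works: the dominant term in the exponent of $KL_{\max}^{1-\varepsilon^2}$ is $-3\varepsilon p(1-\varepsilon^2)$, while the corrections from $D_{p-1}/D_p$ and $(D_{p-1}|Df^{p-1}(c_0)|)^{-\varepsilon^2}$ are $O(\sqrt p)+O(\varepsilon^3 p)$. Your version is somewhat more transparent; the paper's is shorter but relies on an intermediate inequality (the step $\frac{|fx-fy|}{D_{p-1}}\leq\frac{|x-y|^2}{D_{p-1}}$) that as written is hard to justify and appears to contain a slip, so your more careful route has the additional merit of avoiding it.
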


\begin{proof}
We have
\begin{align*}
|fI_{p,j}|
\geq e^{-3\varepsilon p}\left|\delta_{p-1}-\delta_p\right| \delta_p 
\geq e^{-4\varepsilon p}\left(e^{\frac{\varepsilon}{2}}-1\right)D_p,
\end{align*}
 and thus
\begin{align*}
|f^pI_{p,j}|\geq(1/2)|Df^{p-1}(c_0)| |fI_{p,j}|&\geq
e^{-4\varepsilon p}\left(e^{\frac{\varepsilon}{2}}-1\right) |Df^{p-1}(c_0)|D_p.\end{align*}
Using (A3) and Lemma \ref{reclem2} to estimate
the second factor we have
\begin{equation*}
|Df^{p-1}(c_0)|^{-1}D_{p}^{-1} =\sum_{j=0}^{p-1}
|c_j|^{-1}\frac{|Df^{j}(c_0)|}{|Df^{p-1}(c_0)|}\leq
pe^{2\alpha\sqrt{p}}\leq e^{3\alpha\sqrt{p}}.
\end{equation*}
Taking reciprocals and plugging the result into the above inequality,
\begin{align*}
|f^pI_{p,j}|\geq(e^{\frac{\varepsilon}{2}}-1)e^{-4\varepsilon p-3\alpha\sqrt{p}}\geq
e^{-5\varepsilon p }.\end{align*}
Hence (a) holds.

Using (A3)
we have
\begin{equation}\label{size}
\frac{\delta_{p-1}^2}{\delta_{p}^2}
\leq e^{\varepsilon}\left(1+\frac{d_{p-1}}{d_{p}}\right)=
e^{\varepsilon}\left(1+\frac{|c_{p-1}|}{|Df(c_{p-1})||c_{p}|}\right)\leq
3e^{\alpha\sqrt{p}}.\end{equation}
Hence
 $|I_p|\leq \delta_{p-1}\leq
\delta_{p}\sqrt{3e^{\alpha\sqrt{p}}},$ and thus $|I_{p,j}|\leq e^{-3\varepsilon
p}|I_p|\leq e^{-\varepsilon p}\delta_{p}$. Since $e^{-\varepsilon p}\leq
10^{-\frac{\varepsilon p }{3}}\leq\delta_p^{\frac{\varepsilon}{3}}$ we have
$|I_{p,j}|\leq e^{-\varepsilon p}\delta_{p} \leq \delta_p^{1+\frac{\varepsilon}{3}}\leq
d(0,I_{p,j})^{1+\frac{\varepsilon}{3}},$ and (b) holds.

We have $|Df^p(x)-Df^p(y)|\leq I+I\!I$, where
\begin{align*}
I=|Df^{p-1}(fx)||Df(x)-Df(y)|,\ \
I\!I=   |Df(y)||Df^{p-1}(fy)|\left|\frac
{Df^{p-1}(fx)}{Df^{p-1}(fy)}-1\right|.
\end{align*}
By (b),
\begin{equation}\label{I1}
I\leq  8|Df^{p-1}(c_0)||x-y|\leq 8|Df^{p-1}(c_0)|
d(0,I_{p,j})|x-y|^{\frac{\varepsilon}{3+\varepsilon}} .
\end{equation}
The second inequality of Lemma \ref{dist} gives
\[\left|\frac{Df^{p-1}(fx)}{Df^{p-1}(fy)}-1\right|\leq
\frac{|x-y|^2}{D_{p-1}}\leq\frac{|x-y|^2}{d(0,I_{p,j})^2}
\leq|x-y|^{\frac{2\varepsilon}{3+\varepsilon}}.\] 
Using this and 
$|Df(y)|\leq 2\cdot d(0,I_{p,j})$ which follows from (b) we get
\begin{equation}\label{I2}
I\!I\leq
4d(0,I_{p,j})|Df^{p-1}(c_0)||x-y|^{\frac{2\varepsilon}{3+\varepsilon}}.\end{equation}
Combining (\ref{I1}) (\ref{I2}) with $|Df^p(y)|\geq
|Df^{p-1}(c_0)|d(0,I_{p,j})$
yields
$$\left|\frac{Df^p(x)}{Df^p(y)}-1\right|\leq 8
|x-y|^{\frac{\varepsilon}{3+\varepsilon}}\leq |f^px-f^py|^{\frac{\varepsilon}{3+\varepsilon}}\leq|f^px-f^py|^{\varepsilon^2},$$ which implies (c). The last inequality is because
$|f^px-f^py|<1$ and $\frac{\varepsilon}{3+\varepsilon}>\varepsilon^2$.
\end{proof}

\subsection{Combinatorics of partitions}\label{combinatorics}
Let $\Lambda^+=I_{N,1}$ (the right extremal $I_{p,j}$-interval), 
$\Lambda^-=-\Lambda^+$ and $\Lambda=\Lambda^-\cup\Lambda^+.$
By induction on the number of iterations we construct a ``decreasing" sequence
$\{\tilde{\mathcal P}_n\}_{n=0}^\infty$ of partitions of
$\Lambda$ into intervals, and introduce the notion of bound/free states.
Start with $\tilde{\mathcal P}_0=\{\Lambda^+,\Lambda^-\}$. We refer to $\Lambda^\pm$ and $f^N\Lambda^\pm$
as {\it free} and  to $f^i\Lambda^\pm$ $(1\leq i\leq N-1)$ as {\it bound}.
Call $p_0(\Lambda^\pm)=N$ a {\it bound period of $\Lambda^\pm$ at time $0$}.

Set $\tilde{\mathcal P}_0=\tilde{\mathcal P}_1=\cdots=\tilde{\mathcal P}_{N-1}$,
and let $n\geq N$.
The $f^n$-images of elements of $\tilde{\mathcal
P}_{n-1}$ are in two phases: either \emph{bound} or \emph{free}.
If $\omega\in\tilde{\mathcal P}_{n-1}$, $f^n\omega$ is free and $d(0,f^n\omega)<\delta$,
then $\tilde{\mathcal P}_n$ subdivides $\omega$. For each 
resulting element 
$\omega'\in\tilde{\mathcal P}_n|\omega$ with $d(0,f^n\omega')<\delta$
an integer $p_n(\omega')$ is attached; this integer is 
called a \emph{bound period of $\omega'$ at time $n$}. We say $n$ is a {\it free return time} of $\omega'$.

Given $\omega\in\tilde{\mathcal P}_{n-1},$ $\tilde{\mathcal
P}_{n}|\omega$ is defined as follows. 
If
$f^n\omega$ is free and
contains at least two $I_{p,j}$-intervals, then let $\tilde{\mathcal P}_n$
subdivide $\omega$ according to the $(p,j)$-locations of its
$f^n$-image. 
In all other cases, let $\tilde{\mathcal
P}_{n}|\omega=\{\omega\}$.
Partition points are inserted only to ensure that
the $f^n$-images of $\tilde{\mathcal P}_n$-elements intersecting $(-\delta,\delta)$ contain
exactly one $I_{p,j}$.
$f^n$-images out of $(-\delta,\delta)$ are treated as follows.
Let $\omega'\subset\omega$ be such that $f^n\omega'$ is a component of $f^n\omega\setminus
(-\delta,\delta)$. We let $\omega'\in\tilde{\mathcal P}_n$ if $|f^n\omega'|\geq| \Lambda^+|$.
Otherwise, we glue $\omega'$ to the adjacent element whose $f^n$-image
contains $\Lambda^\pm$.

The bound periods at time $n$ of the elements of
$\tilde{\mathcal P}_n|\omega$
 are determined by the $p$-locations of their $f^n$-images. 
 Namely, if $\tilde{\mathcal P}_n$ subdivides $\omega$,  
 $\omega'\in\tilde{\mathcal P}_n|\omega$ and $p_n(\omega')$ makes sense, then
 $p_n(\omega')=p$ where $p$ is such that $f^n\omega'\supset I_{p,j}$ holds for some $j$.
 If $\tilde{\mathcal P}_n|\omega=\{\omega\}$ and $p_n(\omega)$ makes sense, then
 $p_n(\omega)=\min\{p\colon I_p\cap f^n\omega\neq\emptyset\}$.

Let $\omega'\in\tilde{\mathcal P}_n$.
We say $f^{n+1}\omega'$ is {\it bound} if
there exists $k\leq n$ such that $\omega'\in\tilde{\mathcal P}_k$,
$p_k(\omega')$ makes sense and satisfies
$n+1<k+p_k(\omega')$. Otherwise, we say $f^n\omega'$ is {\it
free}.

We need a couple of lemmas on the elements of the partitions.

\begin{lemma}\label{izo}
There exist $C_\varepsilon>1$, $C_\delta>1$ such that
if $\omega\in\tilde{\mathcal P}_{n-1}$ and $f^n\omega$ is free,
then the following holds for all $x,y\in\omega$:
\begin{itemize} 
\item[(a)] 
$\frac{Df^n(x)}{Df^n(y)}<C_\delta;$
\item[(b)] Moreover, if $f^n[x,y]\subset(-\delta,\delta)$, then 
$\frac{Df^n(x)}{Df^n(y)}\leq C_\varepsilon$.
\end{itemize}
\end{lemma}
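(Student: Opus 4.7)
The approach is the standard Benedicks–Carleson bounded-distortion estimate. Starting from the telescoped bound
\[
\log \frac{Df^n(x)}{Df^n(y)} \;\leq\; \sum_{j=0}^{n-1} \frac{|f^j[x,y]|}{d(0, f^j[x,y])},
\]
which uses $|D^2f(z)|/|Df(z)| \leq 1/|z|$, I would decompose $\{0,\ldots,n-1\}$ into bound periods and free segments organized by the free return times $\nu_1 < \nu_2 < \cdots < \nu_s < n$ of $\omega$, with bound periods $p_i = p_{\nu_i}(\omega)$; the initial segment $[0, N)$ serves as the bound period of $\Lambda^\pm$, since $\Lambda^\pm \subset I_{N, \pm 1}$ by definition.

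On each bound period $[\nu_i + 1, \nu_i + p_i)$, the construction guarantees $f^{\nu_i}\omega \subset I_{p_i, j_i}$, so Lemma \ref{dist} gives $|f^{\nu_i+j}\omega| \leq 2|Df^{j-1}(c_0)||f^{\nu_i+1}\omega|$ and $d(0, f^{\nu_i+j}\omega) \geq (4/5)|c_{j-1}|$ for $1 \leq j < p_i$. Summing these contributions and invoking the definition (\ref{Theta}) of $D_{p_i}$ together with Lemma \ref{holder}(b) bounds the bound-period contribution by a constant multiple of $|f^{\nu_i}\omega|/d(0, f^{\nu_i}\omega) \leq \delta_{p_i - 1}^{\varepsilon/3}$. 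On each free segment $[\nu_i + p_i, \nu_{i+1})$, Lemma \ref{exp2} gives expansion at rate $e^{\lambda/3}$ and distance to $0$ at least $\delta$; pulling back geometrically from the next return, the contribution is bounded by $C\delta^{-2}|f^{\nu_{i+1}}\omega|$. Summing over all returns, the total is controlled because the expansion between returns forces $\sum_i \delta_{p_i-1}^{\varepsilon/3}$ to be dominated by the absolutely convergent $\sum_{p > N} \delta_p^{\varepsilon/3}$. The $\delta$-dependence enters only through the initial and terminal free segments, yielding (a) with constant $C_\delta$.

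For (b), the hypothesis $f^n[x, y] \subset (-\delta, \delta)$ removes the $\delta^{-1}$ penalty on the terminal free segment $[\nu_s + p_s, n)$: replacing $|f^j\omega|$ by $|f^j[x, y]|$ and pulling back via Lemma \ref{exp2} from $|f^n[x, y]| \leq 2\delta$ gives $|f^j[x, y]|/d(0, f^j[x, y]) \leq 2 e^{-(\lambda/3)(n - j)}$, a geometric series summing to a constant independent of $\delta$. All remaining contributions depend only on $\varepsilon$ (through $N$ and the summability of $\sum_p \delta_p^{\varepsilon/3}$), yielding $C_\varepsilon$.

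\textbf{Main obstacle.} The delicate point is the aggregation of per-return debts $\delta_{p_i - 1}^{\varepsilon/3}$ into a uniform bound independent of the number of free returns $s$, which may grow with $n$. One must use the geometric separation of return times imposed by expansion and bound periods to show that $\sum_i \delta_{p_i-1}^{\varepsilon/3}$ is controlled by $\sum_{p>N}\delta_p^{\varepsilon/3}$ — essentially by reindexing the sum by level $p$ and bounding the number of $i$ at each level. Equally, the free-segment pull-backs must be chained across consecutive returns with care, to avoid a multiplicative accumulation of $\delta^{-1}$ factors.
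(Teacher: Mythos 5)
Your decomposition into bound and free segments organized by free return times is the right framework, and the per-return bound you derive for the bound-period contribution (roughly $\delta_{p_i-1}^{\varepsilon/3}$) is correct. But the aggregation step is where the argument as written breaks down, and the flag you raise at the end as the ``main obstacle'' is in fact a genuine gap, not a detail: the bound $\delta_{p_i-1}^{\varepsilon/3}$ records only the \emph{depth} $p_i$ of the $i$-th return, not the actual length $|f^{\nu_i}\omega|$. The orbit can return to the same depth $p$ arbitrarily many times as $n$ grows, so ``reindexing by level $p$ and bounding the number of $i$ at each level'' cannot work --- the count is unbounded. What is bounded at each level $p$ is the \emph{sum of lengths} $\sum_j |f^{\nu_{i_j}}\omega|$, because consecutive returns to the same depth are separated by at least one full bound period of length $p$ and hence by expansion at rate $e^{\lambda p/3}$, forcing $|f^{\nu_{i_j}}\omega|$ to decay geometrically in $j$ (this is exactly Sublemma \ref{bdd} later in the paper). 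Your static bound has already thrown that size information away, so the salvage requires reverting to the estimate $\lesssim |f^{\nu_i}\omega|\,\delta_{p_i-1}^{-1}e^{-\varepsilon p_i}$ and carrying the factor $|f^{\nu_i}\omega|$ through. The same accumulation issue affects your free-segment term $\delta^{-2}|f^{\nu_{i+1}}\omega|$, since you pull back only to the next return and then sum over returns; and for part (b) your plan only removes the $\delta$-dependence from the terminal segment, not from the intermediate ones.

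The paper sidesteps all of this by pulling back from the final time $n$ rather than from the next return: it bounds each bound-period distortion $\log\bigl(Df^{p_j}(f^{n_j}x)/Df^{p_j}(f^{n_j}y)\bigr)$ by $|f^{n_j+p_j}[x,y]|^{\varepsilon^2}$ via Lemma \ref{holder}(c), then chains $|f^{n_j+p_j}[x,y]|\leq\delta^{-1}e^{-\frac{\lambda}{3}(n-n_j-p_j)}|f^n[x,y]|$ by expansion. Because the times $n_j+p_j$ are distinct integers below $n$, the resulting sum $\sum_j e^{-\varepsilon^2\frac{\lambda}{3}(n-n_j-p_j)}$ is dominated by a single geometric series $\sum_{k\geq1}e^{-\varepsilon^2\lambda k/3}$, so multiplicities never enter at all. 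The free segments are handled identically, with $\sum_i e^{-\frac{\lambda}{3}(n-i)}$. This gives (a) with $C_\delta=\exp(\delta^{-3})$. For (b), both the $\delta^{-1}$ coming from the pull-back (Lemma \ref{exp2}, dropped when $f^n[x,y]\subset(-\delta,\delta)$) and the remaining $\delta^{-1}$ cancel against $|f^n[x,y]|<2\delta$, giving a $\delta$-independent $C_\varepsilon=\exp\bigl(10\sum_{k\geq1}e^{-\varepsilon^2\lambda k/3}\bigr)$. The lesson is that anchoring all pull-backs at the terminal time $n$ converts the combinatorial accumulation problem into a plain geometric series.
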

\begin{proof}
Let $n_1<\cdots<n_s<n$ denote all the free return times in the
first $n$-iterates of $\omega$, with $p_1,\ldots,p_s$ the
corresponding bound periods defined as above. 
We decompose the time interval $[n_j+p_j,n]$ into
bound and free segments, and then apply Lemma \ref{reclem1} to
each bound segment and Lemma \ref{exp2} to each free segment.
This yields $|f^{n_j+p_j}[x,y]|\leq
\delta^{-1}e^{-\frac{\lambda}{3}(n-n_j-p_j)}|f^n[x,y]|.$
If $f^n[x,y]\subset(-\delta,\delta)$, then $\delta$ can be dropped by
 the last assertion of Lemma \ref{exp2}.

For each bound segment, using this estimate and Lemma \ref{holder}(c) we get
$$
\log\frac{Df^{p_j}(f^{n_j}x)} {Df^{p_j}(f^{n_j}y)}\leq
|f^{n_j+p_j}[x,y]|^{\varepsilon^2}\leq
\delta^{-\varepsilon^2}e^{-\varepsilon^2\frac{\lambda}{3}
(n-n_j-p_j)}\cdot|f^n[x,y]|^{\varepsilon^2}.$$
Therefore
$$\sum_{i\in\cup_{j=1}^s(n_j,n_j+p_j)}\log
\frac{Df(f^{i}x)}{Df(f^{i}y)}\leq\delta^{-\varepsilon^2}\sum_{k=1}^\infty e^{-\varepsilon^2\frac{\lambda}{3}
k}\cdot|f^n[x,y]|^{\varepsilon^2}.$$
 For free segments we have
\begin{align*}
\sum_{i\in[0,n)\setminus\cup_{j=1}^s(n_j,n_j+p_j)}\log
\frac{Df(f^{i}x)}{Df(f^{i}y)}
&\leq4\delta^{-1}\sum_{i\in[0,n)\setminus\cup_{j=1}^s(n_j,n_j+p_j)}
|f^i[x,y]| \\
&\leq4\delta^{-2}|f^n[x,y]|\sum_{i=0}^{n-1}
e^{-\frac{\lambda}{3}(n-i)}.\end{align*}
Set
$C_\delta=\exp\left({\delta^{-3}}\right)$.
Then (a) holds.
If
 $f^n[x,y]\subset (-\delta,\delta)$,  then
the multiplicative constants $\delta^{-\varepsilon^2}$ and $\delta^{-2}$
on the right-hand-sides can be replaced by $1$ and $\delta^{-1}$ respectively. 
Set $C_\varepsilon=
\exp\left(10\sum_{k=1}^\infty e^{-\varepsilon^2\frac{\lambda}{3}k}\right).$
Then (b) holds.
\end{proof}

We define inductively
a sequence $\mathcal F_1,\mathcal F_2,\ldots$ of partitions of a (full measure) subset of 
$\Lambda$ and a sequence $S_1,S_2,\ldots$ of {\it stopping time functions}
for which the following holds for every $k\geq1$:

\begin{itemize}
\item $\mathcal F_k\subset 
\bigcup_{n\geq N}\tilde{\mathcal P}_n$;

\item for each $\omega\in\mathcal F_k$, 
$f^{S_k(\omega)}\omega=\Lambda^+$ or $=\Lambda^-$, and
$f^{S_k(\omega)}$ maps a neighborhood
of $\omega$ 
diffeomorphically onto $3\Lambda^+$ or $3\Lambda^-$ (the intervals centered
at the midpoint of $\Lambda^\pm$ and three times its length). 
\end{itemize}

Start with $k=1$. Let $n\geq N$ and 
$\omega\in\tilde{\mathcal P}_{n-1}$. If $f^n\omega$ is free and
 $f^n\omega\supset3\Lambda^+$ or $\supset3\Lambda^-$, then
set $\omega'=\omega\cap f^{-n}\Lambda^{+}$ or 
$\omega'=\omega\cap f^{-n}\Lambda^{-}$, which is an element of $\tilde{\mathcal P}_n$.
Let
$\omega'\in\mathcal F_1$
and $S_1(\omega')=n$.
Subsequently we iterate the remaining parts $f^n\omega\setminus\Lambda^+$
or $f^n\omega\setminus\Lambda^-$ 
and repeat the same construction. By Lemma \ref{escape} below,
$\mathcal F_1,\mathcal F_2,\ldots$ are partitions of a full measure subset of $\Lambda$.

Given $\mathcal F_k,S_k,$ let $\omega\in\mathcal F_k$.
Without loss of generality we may assume $f^{S_k(\omega)}\omega=\Lambda^+$.
Define $\mathcal F_{k+1}|\omega$ to be the pull-back
of $\mathcal F_1|\Lambda^+$ under $f^{S_k(\omega)}|\omega$.
For $\omega'\in \mathcal F_{k+1}|\omega$ define
$S_{k+1}(\omega')=S_k(\omega)+S_1(f^{S_k(\omega)}\omega')$.

\subsection{Inducing to large scales}\label{return} 
Let $m>0$ and $\omega\in\tilde{\mathcal P}_{m-1}$ be such that $f^m\omega$ is free.
Multiple stopping times can occur in the first $m$-iterates of $\omega$.
Define
$$e(\omega)=\min\{k\geq1\colon\text{$\omega$
is not contained in an element of $\mathcal F_k$}\},$$
and
consider the conditional probability 
$$|\{S_{e(\omega)}\geq m+ n|\omega\}|=\frac{1}{|\omega|}\left|\bigcup\{\omega'\in \mathcal F_{e(\omega)}|\omega\colon
 S_{e(\omega)}(\omega')\geq m+ n\}\right|\in[0,1].$$

\begin{lemma}\label{escape}
There exist $n_0'>0$, $C>0$ and $\zeta\in (0,1)$  
such that 
if $m\geq  n_0'$, $\omega\in\tilde{\mathcal P}_{m-1}$
and $f^m\omega$ is free, then 
$|\{S_{e(\omega)}\geq m+ n|\omega\}|\leq C\zeta^n$ for every $n\geq \varepsilon^{1/2} m.$
\end{lemma}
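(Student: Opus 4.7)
The plan is to iterate the standard Benedicks--Carleson escape-and-binding dichotomy along the partitions $\tilde{\mathcal{P}}_k$, obtaining exponential decay in the number of ``escape rounds'' completed inside the window $[m, m+n]$, with the conditional Lebesgue measure controlled via the bounded distortion provided by Lemma \ref{izo}. An escape round starts at a free moment: either the free image grows past $3|\Lambda^\pm|$ (in which case a definite chunk of $\omega$ escapes into some element of $\mathcal{F}_{e(\omega)}$), or a free return at depth $p$ into $(-\delta,\delta)$ occurs, triggering subdivision according to the $(p,j)$-coordinate and a bound phase of length $p$.

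First I would quantify one round. Right after the bound phase, Lemma \ref{holder}(a) gives $|f^{k+p}\omega''|\geq e^{-5\varepsilon p}$ for each child $\omega''$, and Lemma \ref{exp2} then forces this free image to expand at rate $e^{\lambda/3}$ per step, so it crosses the scale $3|\Lambda^\pm|$ within $C_0 p$ additional iterates. Pulling this back with Lemma \ref{izo}(a), the conditional Lebesgue measure of the non-escaping part at the end of the round is at most $1-\theta$ for a uniform $\theta\in(0,1)$. Consequently the conditional measure of surviving $r$ escape rounds is bounded by $C(1-\theta)^r$.

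Second I would convert this into an estimate in $n$. The tail on the return-depth sequence is exponential: the depth-$p$ stratum has total Lebesgue measure of order $\delta_p\leq e^{-\varepsilon p/2}$, and Lemma \ref{izo}(b) transports this into a conditional-measure estimate at the preceding free moment. Each round of depth $p$ consumes time at most $(1+C_0\varepsilon)p$. A standard Chernoff/large-deviation argument applied to the sum $\sum_j p_j$ of successive depths then shows that, outside a set of conditional measure $O(e^{-\eta n})$, at least $\tau n$ escape rounds occur inside $[m,m+n]$, so the total conditional non-escape measure is bounded by $C\zeta^n$ with $\zeta=\max\{e^{-\eta},(1-\theta)^\tau\}$.

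The principal obstacle is the initial transient. Since $\omega\in\tilde{\mathcal{P}}_{m-1}$, its past history up to time $m$ encodes a possibly long sequence of $(p,j)$-labels reflected in the branching multiplicities of the partitions $\tilde{\mathcal{P}}_k$; at each free return the branching factor is $\lfloor e^{3\varepsilon p}\rfloor$ from Subsection \ref{slow}, and Lemma \ref{reclem1}(b) together with (A3) show that the accumulated combinatorial weight can be as large as $e^{c\varepsilon^{1/2}m}$. The hypothesis $n\geq\varepsilon^{1/2}m$ is calibrated precisely so that $\zeta^n$ dominates this historical factor, giving the uniform bound stated in the lemma.
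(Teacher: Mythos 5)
Your overall instinct—decompose into per-round escape events, control the time consumed by bound phases, and use bounded distortion to turn the picture into a conditional-measure estimate—is in the right territory, but the central quantitative claim, that \emph{``the conditional Lebesgue measure of the non-escaping part at the end of the round is at most $1-\theta$ for a uniform $\theta$,''} is not justified and is in fact false in general. The escape mechanism in the construction of $\mathcal F_1$ (Section \ref{combinatorics}) is that a stopping time occurs when $f^n\omega\supset 3\Lambda^\pm$, not merely when $|f^n\omega|\geq 3|\Lambda^+|$; a free image can become large without containing the tiny interval $3\Lambda^\pm$, and when it finally does contain it the escaping piece has relative size $\approx|\Lambda^\pm|/|f^n\omega|$, which can be arbitrarily small. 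The paper never asserts a per-round escape fraction; instead it proves the lemma by two essentially different mechanisms applied to two disjoint subsets of $\mathcal G=\{\omega'\in\mathcal F_{e(\omega)}|\omega: S_{e(\omega)}(\omega')\geq m+n\}$: (i) for $\mathcal G'$, the elements that have a free return into $(-\delta,\delta)$ in $[m,m+n)$, it uses the itinerary decomposition and the competition between derivative growth $e^{\lambda\sum p_i/3}$ and the combinatorial multiplicity $e^{\varepsilon n+4\varepsilon\sum p_i}$, with Sublemma \ref{subl} ($n_{i+1}-n_i\leq 2p_i$) guaranteeing $\sum p_i\geq n/4$ when $n_1\leq m+n/2$; (ii) for $\mathcal G''$, the elements with no such return, it builds the escaping-component tree $\mathcal E_t$ and shows $|E_{t+1}|\leq(1-\hat\zeta)|E_t|$, combined with a derivative-growth bound for the short-itinerary part $\mathcal H$. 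Notice that the uniform constant $\hat\zeta$ in (ii) measures the fraction that \emph{fails to remain an escaping component} (i.e.\ either falls into $(-\delta,\delta)$ or stops)---a different and actually verifiable statement---not the fraction that acquires a stopping time.

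Your proposal also does not touch the $\mathcal G''$ case. For elements with no returns to $(-\delta,\delta)$ in the window there are no depths $p_j$, so a Chernoff bound on $\sum p_j$ says nothing; yet these are precisely the elements whose images may wander for a long time outside $(-\delta,\delta)$ without reaching a stopping time, and the paper has to control them separately (the $\mathcal H,\mathcal H'$ split and estimates \eqref{ret3}, \eqref{ret4}). Finally, your reading of the hypothesis $n\geq\varepsilon^{1/2}m$ as dominating a historical branching factor is not what drives the proof: since one conditions on a fixed $\omega\in\tilde{\mathcal P}_{m-1}$, the past multiplicity is irrelevant. What $n\geq\varepsilon^{1/2}m$ actually buys is (a) control of the factor $1/|f^m\omega|\leq\delta^{-1}e^{5\varepsilon m}$ appearing after dividing by $|\omega|$ (one needs this dominated by $\zeta^n$), (b) the bound $r<\varepsilon^{1/2}m\leq n$ on the first subdivision time in the $\mathcal G''$ analysis, and (c) the guarantee that $n$ is large when $m\geq n_0'$ so the $\delta$- and $C_\delta$-dependent prefactors in \eqref{ret1}--\eqref{ret2} can be absorbed.
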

\begin{proof} 
Let $\mathcal G=\{\omega'\in \mathcal F_{e(\omega)}|\omega\colon
 S_{e(\omega)}(\omega')\geq m+ n\}$.
Let 
$\mathcal G'$ denote the set of all 
$\omega'\in\mathcal G$ for which there exists $m\leq k<m+n$ such that 
$d(0,f^k\omega_k')
<\delta$ holds for the element $\omega_k'\in\tilde{\mathcal P}_k$ containing $\omega'$. Let 
$\mathcal G''=\mathcal G\setminus\mathcal G'$.

 Each $\omega'\in\mathcal G'$ 
has an
\emph{itinerary} $(n_1,p_1,j_1),
\ldots,(n_s,p_s,j_s)$ that is defined as follows:
$m\leq n_1<\cdots<n_s< m+n$ is a sequence of integers,
associated with a nested sequence $\omega\supset\omega_{n_1}\supset\cdots\supset
\omega_{n_s}\supset\omega'$ of intervals such that for each $i$, $\omega_{n_i}$ is
the element of $\tilde{\mathcal P}_{n_i}$ containing $\omega'$ that arises out of the subdivision at time $n_i$, with $d(0,f^{n_i} \omega_{n_i})<\delta$ and $(p_i,j_i)$ its
$(p,j)$-location. 
Let $n_{s+1}\geq m+n$ be such that $\tilde{\mathcal
P}_{n_{s+1}}$ partitions $\omega_{n_s}$.
For any $x\in\omega'$ we have
$|Df^{n_{s+1}}(x)|\geq \delta e^{\frac{\lambda}{3}\sum_{i=1}^sp_i}|Df^m(x)|$
and
$|Df^m(x)|\geq C_\delta^{-1}|f^m\omega|/|\omega|$, and thus
$|\omega'|\leq|\omega_{n_s}|\leq C_\delta\delta^{-1}e^{-\frac{\lambda}{3}\sum_{i=1}^sp_i}|\omega|/|f^m\omega|.$
Then
\begin{align*}\label{ret}\sum_{\omega'\in\mathcal G'}|\omega'|&=
\sum_s\sum_{P} \sum_{\stackrel{\{(n_i,p_i,j_i)\}_{i=1}^s}{\sum_{i=1}^sp_i=P}}|
\omega'|\\&\leq C_\delta\delta^{-1}\frac{|\omega|}{|f^m\omega|}\sum_s\sum_{P}e^{-\frac{\lambda}{3}P}
\#\left\{\{(n_i,p_i,j_i)\}_{i=1}^s\colon \sum_{i=1}^sp_i=P\right\}.\end{align*}
The integer $s$ in the summand ranges up to $[n/N]$.
Since $1\leq |j_i|\leq 
e^{3\varepsilon p_j}$,
the number of all sequences $\{(p_i,j_i)\}_{i=1}^s$
with $\sum_{i=1}^sp_i=P$ is
$\leq 2^s\left(\begin{smallmatrix}
P+s\\s\end{smallmatrix}\right)e^{3\varepsilon P}.$
Since there are at most $\left(\begin{smallmatrix}
n\\s\end{smallmatrix}\right)$ number of ways of distributing
$n_1,\ldots,n_s$ in $[m,m+n)$, by the Stirling formula for factorials we get
\begin{equation}\label{counta}
\#\left\{\{(n_i,p_i,j_i)\}_{i=1}^s\colon \sum_{i=1}^sp_i=P\right\}\leq  \begin{pmatrix}
n\\s\end{pmatrix}2^s\begin{pmatrix}
P+s\\s\end{pmatrix}e^{3\varepsilon P}\leq e^{\varepsilon n}e^{4\varepsilon P}.\end{equation}

\begin{sublemma}\label{subl}
For every $1\leq
i\leq s$,
$n_{i+1}-n_i\leq 2p_i.$
\end{sublemma}
\begin{proof}
Lemma \ref{holder}(a) gives $|f^{n_i+p_i}\omega_i|\geq e^{-5\varepsilon p_i}$,
and thus $n_{i+1}-n_i-p_i\leq\frac{6\varepsilon p_i}{\lambda}-\log \delta$;
for otherwise, Lemma \ref{exp} would yield $|f^{n_{i+1}}\omega_i|>2$, which is 
a contradiction. A simple computation shows $-\log\delta<N\leq p_i$, and
thus $n_{i+1}-n_i\leq 2p_i$.
\end{proof}
It follows that 
$n<n_{s+1}\leq n_1+2\sum_{i=1}^{s}p_i.$
If $n_1\leq m+n/2$ then
$\sum_{i=1}^{s}p_i\geq n/4$, and therefore 
\begin{equation}\label{ret1}\sum_{\stackrel{\omega'\in\mathcal G'}{n_1\leq m+n/2}}|\omega'|
\leq C_\delta\delta^{-1}\frac{n}{N}\sum_{P\geq n/4}
e^{(8\varepsilon-\frac{\lambda}{3})P}\frac{|\omega|}{|f^m\omega|}\leq
e^{-\frac{\lambda}{13}n}\frac{|\omega|}{|f^m\omega|},\end{equation}
where the last inequality holds provided $m$ is sufficiently large
because $n\geq \varepsilon^{1/2} m$.

For those $\omega'\in\mathcal G'$ with $n_1>m+n/2$, 
a similar reasoning shows
$$|\omega'|\leq|\omega_{n_1}|\leq C_\delta\delta^{-1} e^{-\lambda(n_1-m)}
\frac{|\omega|}{|f^m\omega|}\leq  C_\delta\delta^{-1} e^{-\frac{\lambda n}{2}}
\frac{|\omega|}{|f^m\omega|}
\leq  e^{-\frac{\lambda n}{3}}
\frac{|\omega|}{|f^m\omega|},$$
and therefore
\begin{equation}\label{ret2}\sum_{\stackrel{\omega'\in\mathcal G'}{n_1> m+n/2}}|\omega'|
\leq C_\delta\delta^{-1}
\frac{n}{N}\sum_{P\leq n}
e^{4\varepsilon P+\varepsilon n-\frac{\lambda}{3}n}\frac{|\omega|}{|f^m\omega|}\leq
e^{-\frac{\lambda n}{4}}\frac{|\omega|}{|f^m\omega|},\end{equation} 
where the last inequality holds provided $m$ is sufficiently large 
because $n\geq \varepsilon^{1/2}m$.
\medskip

 We now treat elements of $\mathcal G''$.
 Let $r\geq 0$ denote the integer such that $\omega$ is subdivided at time $m+r$.
 Since $I_{pj}\supset f^k\omega$ holds for some $k<m$
 we have $|f^m\omega|\geq\delta e^{-5\varepsilon m}$.
If $r\geq \varepsilon^{1/2} m$, then 
$|f^{m+n}\omega|\geq
\delta e^{-5\varepsilon m}e^{\lambda\varepsilon^{1/2} m}>2=|X|,$
which is a contradiction. 
 Hence $r<\varepsilon^{1/2} m$, and thus $r<n$. 
 
Let $k\geq m$. Let us say that
$\tilde\omega\in\tilde{\mathcal P}_k|\omega$ is an {\it escaping component at time $k$}
if $\tilde\omega$ arises out of subdivision at time $k$ and satisfies
$d(0,f^k\tilde\omega)=\delta$.
 Let $\mathcal E_1$ denote the collection of escaping components at time $r$.
 If $\mathcal E_1=\emptyset$, then $\mathcal G''=\emptyset$.
 Hence we assume $\mathcal E_1\neq\emptyset$. 

Each $\omega'\in\mathcal G''$ 
has an
\emph{itinerary} $(k_1,\epsilon_1),
\ldots,(k_t,\epsilon_t)$ that is defined as follows:
$m\leq k_1<\cdots<k_t< m+n$ is a sequence of integers,
associated with a nested sequence $\omega\supset\omega_{k_1}\supset\cdots\supset
\omega_{k_t}\supset\omega'$ of intervals such that for each $i$, $\omega_{k_i}$ is
an escaping component at time $k_i$ and $\epsilon_i=+$ (resp. $\varepsilon_i=-$)
if $f^{k_i} \omega_{k_i}$ is at the right (resp. left) of the critical point.
Call $t$ the {\it length of the itinerary of} $\omega'$.
Using the previous estimates and the fact that $\omega_{k_t}$ is not subdivided up to time
$m+n-1$, we have 
$|\omega_{k_t}|\leq e^{-\lambda n/2}|\omega|/|f^m\omega|.$

Let $\mathcal H=\{\omega'\in\mathcal G''\colon\text{The length of the itinerary is $\leq \theta
n$}\}$.
The number of all itineraries of length $t$ is $\leq\left(\begin{smallmatrix}n\\t\end{smallmatrix}\right)$, and so by the Stirling formula one can choose
a small constant $\theta>0$ such that $\#\mathcal H\leq e^{\lambda n/100}$.
Then
\begin{equation}\label{ret3}\sum_{\omega'\in
\mathcal H}|\omega'|\leq\#\mathcal H e^{-\lambda n/2}\frac{|\omega|}{
|f^m\omega| }\leq
e^{-\frac{\lambda}{3}n}\frac{|\omega|}{|f^m\omega|}.\end{equation}

To treat elements in 
$\mathcal H'=\{\omega'\in\mathcal G''\colon\text{The length of the itinerary is $\geq \theta
n$}\}$, for each $t\geq1$
 define a collection $\mathcal E_t$ of escaping components (at variable times) inductively as follows: 
  each $\omega\in\mathcal E_t$ is an escaping component at 
some time, say $k=k(\omega)$.
Let $k'>k$ denote the time at which $\omega$ is subdivided. 
Then $\omega$ contains no or at most two escaping components at time $k'$. 
We let them in $\mathcal E_{t+1}$.
Let $E_t=\bigcup_{\omega\in\mathcal E_t}\omega$.
The bounded distortion in Lemma \ref{izo} implies that 
there exists $\hat\zeta\in(0,1)$ 
such that
for every $t\geq1$ and $\omega\in\mathcal E_t$,
 $|\omega\cap\Omega_{t+1}|\leq(1-\hat\zeta)|\omega|.$ Hence
 $|E_{t+1}|\leq(1-\hat\zeta)|E_t|,$ and thus
$|E_t|\leq(1-\hat\zeta)^{t}|\omega|$.
By definition, if the itinerary of $\omega'\in\mathcal H'$ is of length $t$,
then $\omega'$ is contained in an element of $\mathcal E_t$.
Hence
 \begin{equation}\label{ret4}\sum_{\omega'\in
\mathcal H'}|\omega'|\leq\sum_{\theta n\leq t\leq n}|E_t|\leq
\sum_{t\geq\theta n}(1-\hat\zeta)^t\leq \hat\zeta^{-1}(1-\hat\zeta)^{ \theta n}.\end{equation}
Set $C=1+\hat\zeta^{-1}$ and $\zeta=\max\{e^{-\frac{\lambda}{14}},(1-\hat\zeta)^{ \theta }\}$.
\eqref{ret1} \eqref{ret2} \eqref{ret3} \eqref{ret4} yield 
 $ |\{S_{e(\omega)}\geq m+ n|\omega\}|
\leq e^{-\frac{\lambda}{14}n}+\hat\zeta^{-1}(1-\hat\zeta)^{ \theta n}\leq C\zeta^n.$
 \end{proof}

\section{Proof of the proposition}
In this last section we prove the proposition. 
In Sect. \ref{positive} we construct a Cantor set $\Omega_\infty$ of positive Lebesgue measure. In Sect. \ref{induced} and Sect. \ref{tower} we construct an induced map $F\colon \Omega_\infty\circlearrowleft$ and then define an associated tower $\Delta$. In Sect. \ref{recurrence} we show that this tower has a distinctive property, and in Sect. \ref{horseshoe} use this property to construct a certain convenient horseshoe. In Sect. \ref{meas} we construct an invariant probability measure with the properties in the statement of the proposition.

\subsection{Construction of a positive measure set}\label{positive}
We construct a subset $\Omega_\infty$ of $\Lambda$ with positive
Lebesgue measure. Let $\Omega_{N-1}=\Lambda$.
For $n\geq N$
we inductively define
$$\Omega_n=\Omega_{n-1}\setminus \bigcup
\{\omega\in\tilde{\mathcal P}_n\colon d(0,f^{n}\omega)<\delta
e^{-\varepsilon n}\},$$
 and set
$\Omega_\infty=\bigcap_{n\geq N-1}\Omega_{n}$. Any component of
$\Omega_{n-1}\setminus\Omega_n$ is called a {\it gap of order
$n$}.

\begin{lemma}\label{positi}
For any $n\geq0$ and $\tilde\omega\in\tilde{\mathcal P}_n$ we have
$|\tilde\omega\cap\Omega_\infty|\geq(1/2)|\tilde\omega|.$
In particular,
$|\Omega_\infty|\geq (1/2)|\Lambda|.$
\end{lemma}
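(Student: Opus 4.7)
My plan is to control $|\tilde\omega\setminus\Omega_\infty|$ via the decomposition
\[
\tilde\omega\setminus\Omega_\infty=\bigsqcup_{k>n}\bigl(\tilde\omega\cap G_k\bigr),\qquad G_k:=\Omega_{k-1}\setminus\Omega_k,
\]
valid for $\tilde\omega\in\tilde{\mathcal P}_n$ with $\tilde\omega\subset\Omega_n$ (the only non-vacuous case; the ``In particular'' assertion is the instance $n=0$, $\tilde\omega=\Lambda^\pm$). The target is a per-order bound $|\tilde\omega\cap G_k|\leq C e^{-\beta k}|\tilde\omega|$ with $C,\beta>0$ independent of $k,n,\tilde\omega$; the geometric sum then gives $|\tilde\omega\setminus\Omega_\infty|/|\tilde\omega|\leq C e^{-\beta N}/(1-e^{-\beta})\leq 1/2$ provided $N$ (hence $\delta=\delta_N$) is chosen sufficiently large at the outset of Sect.~2.

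For the per-order bound I would work parent-by-parent. Each gap of order $k$ inside $\tilde\omega$ lies in some $\omega_{k-1}\in\tilde{\mathcal P}_{k-1}$ with $\omega_{k-1}\subset\tilde\omega\cap\Omega_{k-1}$ that is subdivided at time $k$, so $f^k\omega_{k-1}$ is free and contains at least two $I_{p,j}$-intervals. Lemma \ref{izo}(b) applied at this free return gives
\[
|\omega_{k-1}\cap G_k|\leq C_\varepsilon\,\frac{2\delta e^{-\varepsilon k}}{|f^k\omega_{k-1}|}\,|\omega_{k-1}|.
\]
When $f^k\omega_{k-1}\not\subset(-\delta e^{-\varepsilon k},\delta e^{-\varepsilon k})$, the right-hand side already supplies an exponential gain (the quotient equals the window-to-image length ratio, bounded by $2\delta e^{-\varepsilon k}/|f^k\omega_{k-1}|$ with $|f^k\omega_{k-1}|\geq\delta e^{-\varepsilon k}$), and a packing argument on the essentially disjoint free $f^k$-images of such parents collapses the sum $\sum|\omega_{k-1}|\leq|\tilde\omega|$ to give the desired proportional bound. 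When instead $f^k\omega_{k-1}\subset(-\delta e^{-\varepsilon k},\delta e^{-\varepsilon k})$, the whole $\omega_{k-1}$ is gap, and one must trace it back to its most recent free-return time $k'<k$, apply the recovered-expansion estimates of Lemmas \ref{reclem1} and \ref{exp2} on $[k',k]$, and sum over the combinatorial $(p,j)$-itineraries exactly as in the proof of Lemma \ref{escape}; the combinatorial factor $e^{3\varepsilon p}$ from Sect.~2.3 is amply dominated by the expansion factor $e^{\lambda p/3}$, producing exponential smallness.

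The main obstacle is precisely this second case: since the entire $\omega_{k-1}$ is lost, the naive distortion inequality yields no gain on its own, and one is forced to invoke the full binding/recovery apparatus of Sect.~2 together with a Stirling-type count of admissible $(p,j)$-itineraries on the subdivision tree leading from $\tilde\omega$ to $\omega_{k-1}$---essentially a localized version of the telescoping argument used in Lemma \ref{escape}. Once both cases are controlled and the per-order bound is in hand, summing in $k$ delivers $|\tilde\omega\cap\Omega_\infty|\geq(1/2)|\tilde\omega|$ uniformly in $n\geq0$.
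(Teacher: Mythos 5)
The decomposition $\tilde\omega\setminus\Omega_\infty=\bigsqcup_{k>n}(\tilde\omega\cap G_k)$ and the idea of working parent-by-parent at the free return at time $k$ are both in line with the paper, but your per-order estimate does not actually close. In your Case~1 you bound the deleted fraction of a parent $\omega_{k-1}$ by $C_\varepsilon\cdot 2\delta e^{-\varepsilon k}/|f^k\omega_{k-1}|$ and then claim $|f^k\omega_{k-1}|\geq\delta e^{-\varepsilon k}$. Even granting that lower bound (which does not follow from the case hypothesis $f^k\omega_{k-1}\not\subset(-\delta e^{-\varepsilon k},\delta e^{-\varepsilon k})$: the image could barely straddle the endpoint $\delta e^{-\varepsilon k}$ and have arbitrarily small length), this only yields a ratio $\leq 2C_\varepsilon$ --- a constant, not a quantity that decays exponentially in $k$. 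The geometric summability you assert is therefore unsupported. The ingredient you are missing is a \emph{much better} lower bound on the free image at time $k$, and this is exactly what the paper establishes in its sublemma: since $\omega\subset\Omega_{k-1}$, the slow-recurrence constraint $d(0,f^i\omega)\geq\delta e^{-\varepsilon i}$ together with Lemma~\ref{reclem1}(b) bounds the bound period of the last free return by $p\leq\frac{2}{\lambda}(-\log\delta+\varepsilon k)$, and then Lemma~\ref{holder}(a) gives $|f^{i+p}\omega|\geq e^{-5\varepsilon p}\geq\delta e^{-\frac{10\varepsilon^2}{\lambda}k}$, which propagates to $|f^k\omega\cap(-\delta,\delta)|\geq\delta e^{-\frac{10\varepsilon^2}{\lambda}k}$. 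Only with this does the deleted fraction become $\leq 3C_\varepsilon e^{-(\varepsilon-\frac{10\varepsilon^2}{\lambda})k}$, which is genuinely summable.

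This same sublemma also dissolves your ``main obstacle'': if $f^k\omega$ is free (which the paper checks first; a bound return stays at distance $\geq(9/10)|c_{k-j}|\geq(\delta/2)e^{-\varepsilon k}$ from $0$ and so triggers no deletion), then $|f^k\omega\cap(-\delta,\delta)|\geq\delta e^{-\frac{10\varepsilon^2}{\lambda}k}\gg 2\delta e^{-\varepsilon k}$, so $f^k\omega$ cannot lie entirely inside the deleted window and Case~2 is vacuous. The elaborate Stirling-type itinerary count you propose for Case~2 is therefore both unnecessary and, as sketched, not well-posed: the $f^k$-images of the various parents inside $\tilde\omega$ at a non-Markov time $k$ are not ``essentially disjoint,'' so the packing heuristic underlying both of your cases does not apply. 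The actual argument is purely local to each parent and hinges on the slow-recurrence bound on the preceding bound period, which your proposal never invokes.
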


\begin{proof}
Choose a point $x\in \tilde\omega\cap\Omega_\infty$.
Since
$|f^ix|\geq\delta e^{-\varepsilon i}$ for every
$N\leq i\leq n$, the element of $\tilde{\mathcal P}_n$ containing $x$, which is $\tilde\omega$, 
belongs to $\Omega_n$.
Let $k\geq n+1$ and $\omega\in\tilde{\mathcal P}_{k-1}|\tilde\omega$ be such that
$\omega\subset\Omega_{k-1}$, and suppose that some part of it is deleted at step $k$. 
We claim that $f^k\omega$ is free. Indeed, if this is false then for the last free return
time $j$ of $\omega$ before $k$ with bound period $p$ we have $k<j+p$. We also have
$|f^{j+1}\omega|\leq 2\delta_{p}^2,$ and thus $|f^{k}\omega|\leq
|Df^{k-j-1}(c_0)|D_{p}\leq(1/10)|c_{k-j}|.$ This yields
\begin{equation*}\label{inessential2}
d(0,f^k\omega)\geq(9/10)|c_{k-j}|
\geq(9/10)e^{-\alpha\sqrt{p}}
\geq(9/10)e^{-\alpha\sqrt{\frac{2}{\lambda}(-\log\delta+\varepsilon k)}}>
(\delta/2)e^{-\varepsilon k},\end{equation*}
which means that no part of $\omega$ is deleted at step $k$ and a contradiction arises.

\begin{sublemma}
$|f^k\omega\cap(-\delta,\delta)|\geq\delta e^{-\frac{10\varepsilon^2}{\lambda}k}.$
\end{sublemma}
\begin{proof}
 If $f^k\omega\subset(-\delta,\delta)$ then let $i<k$ be such that
$f^i\omega$ is free and contains some $I_{p,j}$. Since 
$p\leq\frac{2}{\lambda}(-\log\delta+\varepsilon k)$, Lemma \ref{holder}(a) yields
$|f^{i+p}\omega|\geq e^{-5\varepsilon p}\geq \delta^{\frac{10\varepsilon}{\lambda}}
e^{-\frac{10\varepsilon^2}{\lambda}k}$.
Since $f^k\omega$ is free we obtain 
$|f^k\omega|\geq|f^{i+p}\omega|\geq \delta
e^{-\frac{10\varepsilon^2}{\lambda}k}$.
If $f^k\omega$ is not contained in $(-\delta,\delta)$, 
then obviously
$|f^k\omega\cap(-\delta,\delta)|\geq \delta(1-e^{-\varepsilon k})>\delta e^{-\frac{10\varepsilon^2}{\lambda} k}.$
\end{proof}
The subinterval of
$f^k\omega$ to be deleted has length $\leq 3\delta e^{-\varepsilon k}.$
Taking distortions into consideration when pulling back to
$\omega$, we have
$$\frac{|\tilde\omega\cap(\Omega_{k-1}\setminus\Omega_k)|}{|\tilde\omega\cap\Omega_{k-1}|}
\leq 3C_\varepsilon e^{-\left(\varepsilon -\frac{10\varepsilon^2}{\lambda}\right)k}.$$
This yields
\[\frac{|\tilde\omega\cap\Omega_\infty|}{|\tilde\omega|}\geq\prod_{k=N}^\infty\left(1-
3C_\varepsilon e^{-\left(\varepsilon -
\frac{10\varepsilon^2}{\lambda}\right)k}\right)\geq\frac{1}{2}.\hfill{}\qedhere\]
\end{proof}

\subsection{Construction of an induced map on the Cantor set}
\label{induced}
Lebesgue almost every $x\in\Omega_\infty$ has an infinite number of
stopping times, which we denote by $S_1(x)<S_2(x)<\cdots$ with a slight abuse of
notation.
We define its subsequence
$R_1(x)<R_2(x)<\cdots$ and a return
time $R(x)$ to $\Omega_\infty$ as follows. Start with
$R_1(x)=S_1(x)$. Given $R_i(x)$, 
if $f^{R_i(x)}x\in\Omega_\infty$ then 
let $R(x)=R_i(x)$. If
$f^{R_i(x)}x\notin\Omega_\infty$, then let $g_i$ denote the order of the gap containing
$f^{R_i(x)}x$ and define $R_{i+1}(x)$ to be the smallest stopping time after $R_i(x)+g_i$.
Note that $R$ is not necessarily the first return time to $\Omega_\infty$.
The $R_i(x)$ $(i=1,2,\ldots)$ are called {\it regular return times of $x\in\Omega_\infty$}.
Let $\Omega_\infty^\pm=\Lambda^\pm\cap\Omega_\infty$.


\begin{lemma}\label{lattice}
There exists a countable partition $\mathcal Q$ of a full measure
subset of $\Omega_\infty$ such that the following holds for every
$\omega\in\mathcal Q$:
\begin{itemize}

\item[(a)] $R$
is constant on $\omega$ (denote this value by $R(\omega)$) and $f^{R(\omega)}$
maps $\omega$ bijectively onto $\Omega_\infty^\pm$. In addition,

\item[(b)] 
for all $x,y\in\omega$,
$\left|\frac{Df^{R(\omega)}(x)}{Df^{R(\omega)}(y)}-1\right|\leq 3|\Lambda^+|^{-1}
|f^{R(\omega)}[x,y]|$.


\end{itemize}
\end{lemma}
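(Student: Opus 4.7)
The plan is to build $\mathcal{Q}$ by an inductive tree procedure indexed by the regular return times $R_1 < R_2 < \cdots$ and intermediate gap visits. At the root, for each $\omega_1 \in \mathcal F_1$ with $\omega_1 \cap \Omega_\infty \neq \emptyset$, I would place $A(\omega_1) := \omega_1 \cap f^{-S_1(\omega_1)}(\Omega_\infty^\pm)$ into $\mathcal Q$ with $R = S_1(\omega_1)$; for each gap $G \subset \Lambda^\pm$ of order $g$, the piece $\omega_1 \cap f^{-S_1}(G)$ is refined further by the pulled-back $\mathcal F_j$'s until the smallest stopping time exceeding $S_1 + g$ is constant on each sub-piece, then the dichotomy (land in $\Omega_\infty^\pm$ vs.\ land in a new gap) is applied again. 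Iterating produces a countable tree whose leaves are the elements of $\mathcal Q$, each indexed by a finite history $(R_1, G_1, \ldots, R_{i^\ast-1}, G_{i^\ast-1}, R_{i^\ast})$ with the terminal condition $f^{R_{i^\ast}} \cdot \in \Omega_\infty^\pm$.

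To establish that $\mathcal Q$ covers a full measure subset of $\Omega_\infty$, I would combine two exponential estimates across the tree: the tail estimate of Lemma~\ref{escape} for stopping times, and the exponential decay $\lesssim e^{-(\varepsilon - 10\varepsilon^2/\lambda)g}$ of the relative measure of gaps of order $g$ (visible in the proof of Lemma~\ref{positi}). A Borel--Cantelli-type summation across tree levels then shows that the set of points $x \in \Omega_\infty$ never captured by any leaf is Lebesgue-null.

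For (a), each $\omega \in \mathcal Q$ is contained in a terminal $\omega_\star \in \mathcal F_{k_{i^\ast}}$, on which $f^R = f^{S_{k_{i^\ast}}}$ extends to a diffeomorphism from a neighborhood of $\omega_\star$ onto $3\Lambda^\pm$ by the defining property of Sect.~\ref{combinatorics}. Injectivity of $f^R|_\omega$ is immediate. For surjectivity onto $\Omega_\infty^\pm$, given $z \in \Omega_\infty^\pm$ set $x := (f^R|_{\omega_\star})^{-1}(z)$ and verify $x \in \omega$: the slow recurrence $|f^n x| \geq \delta e^{-\varepsilon n}$ holds for $N \leq n \leq R$ via $\omega_\star \subset \Omega_R$ (the nested partitions propagate the condition from any single point of $\omega_\star \cap \Omega_\infty$, as in Lemma~\ref{positi}), and for $n > R$ from $z \in \Omega_\infty^\pm$ (with the finitely many intermediate indices $R < n < R + N$ handled by the binding estimates of Sect.~\ref{recovery} for $N$ large); the gap-visit history is preserved by construction, since the refinement of $\omega_\star$ already encodes $f^{R_i}\omega_\star \subset G_i$ for each $i < i^\ast$.

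For (b), I invoke the Koebe distortion principle. The quadratic $f$ has negative Schwarzian derivative on each monotone branch, a property preserved by composition on intervals missing critical preimages; hence $f^R$ has negative Schwarzian on $\omega_\star$. Since $f^R$ is a diffeomorphism from $\omega_\star$ onto $3\Lambda^\pm$ with $\omega$ mapped into the central third $\Lambda^\pm$, the Koebe space ratio is $\tau = 1$ on both sides, and the Koebe principle yields exactly $\left|\frac{Df^R(x)}{Df^R(y)} - 1\right| \leq 3|\Lambda^+|^{-1}|f^R x - f^R y|$ for $x,y \in \omega$. The main technical obstacle I anticipate is the full-measure accounting in the tree: gaps of any order can be visited at any tree level, so combining the two exponential decay estimates requires a careful interleaving of Lemma~\ref{escape} with the gap-size bound, and in particular propagating the control through the recursive pulling-back of partitions.
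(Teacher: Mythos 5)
Your construction and both deductions are essentially the paper's proof: you build $\mathcal Q$ by the same tree indexed by the stopping-time partitions $\mathcal F_k$, handle gap visits by iterating to the next regular return time, check slow recurrence when pulling $\Omega_\infty^\pm$ back (this is precisely the content of the paper's Sublemmas~\ref{match} and~\ref{match2}), and obtain (b) from the Koebe Principle with $\tau=1$ using the diffeomorphic extension onto $3\Lambda^\pm$. One small conflation worth flagging: the slow-recurrence verification is not what makes $f^{R(\omega)}|\omega$ surjective onto $\Omega_\infty^\pm$ (that is automatic since $\omega$ is defined as a preimage of $\Omega_\infty^\pm$ under a diffeomorphism); it is what guarantees $\omega\subset\Omega_\infty$, which is needed so that $\mathcal Q$ is genuinely a partition of a subset of $\Omega_\infty$.
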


\begin{proof}
We construct $\mathcal Q$ by induction using the partitions
$\mathcal F_k$
and the stopping times $S_k$. To begin with,
for all $\omega\in \mathcal F_1$ intersecting $\Omega_\infty$ let
$(f^{S_1(\omega)}|\omega)^{-1}\Omega_\infty\in\mathcal Q.$ 
\begin{sublemma}\label{match}
If $\omega\in\mathcal F_1$ and $\omega\cap\Omega_\infty\neq\emptyset$, then
$(f^{S_1(\omega)}|\omega)^{-1}\Omega_\infty\subset
\Omega_\infty$.
\end{sublemma}
\begin{proof}
Let $x\in
(f^{S_1(\omega)}|\omega)^{-1}\Omega_\infty$.
Since $\omega\cap \Omega_\infty\neq\emptyset$,
$\omega\subset\Omega_{S_1(\omega)-1}$ holds, and thus
$x\in\Omega_{S_1(\omega)-1}$. Since
$f^{S_1(\omega)}x\in\Omega_\infty$, $x\in\Omega_{S_1(\omega)}$ holds.
Since  $f^{S_1(\omega)}x=y$ for some $y\in\Omega_\infty$,
for every $n>0$ we have
$|f^{n+S_1(\omega)}x|=|f^ny|\geq \delta e^{-\varepsilon n}>\delta
e^{-\varepsilon (n+S_1(\omega))}$, and thus $x\in\Omega_{n+S_1(\omega)}$.
This yields $x\in\Omega_\infty$. \end{proof}

For $\omega\in\mathcal F_1$ intersecting
$\Omega_\infty$, let $G$ be a gap of order $g$ with $G\cap
f^{S_1(\omega)}(\omega\cap\Omega_\infty)\neq\emptyset$. 
For any $k>1$ and
$\omega'\in\mathcal F_{k}|\omega$ such that: (i)
$\omega'\cap\Omega_\infty\neq\emptyset$;
(ii) $f^{S_1(\omega)}\omega'\subset G$; (iii) 
$S_k(\omega')=R_2(x)$ for $x\in\omega'\cap\Omega_\infty$,
let
$(f^{S_k(\omega')}|\omega')^{-1}\Omega_\infty\in\mathcal
Q$.

\begin{sublemma}\label{match2}
$(f^{S_k(\omega)}|\omega')^{-1}\Omega_\infty\subset\Omega_\infty$.
\end{sublemma}
\begin{proof}
Let $x\in
(f^{S_k(\omega)}|\omega')^{-1}\Omega_\infty.$
Since $\omega'\cap\Omega_\infty\neq\emptyset$, $x\in\Omega_{S_k(\omega)-1}$
holds. Since $f^{S_k(\omega)}x\in\Omega_\infty$, $x\in\Omega_{S_k(\omega)}$ holds.  
Reasoning as in the proof of Sublemma \ref{match}, for
every $n>0$ we have $|f^{n+S_k(\omega)}x|\geq \delta e^{-\varepsilon n}>\delta
e^{-\varepsilon(n+S_k(\omega))}$, and thus $x\in\Omega_{n+S_k(\omega)}$. This yields
$x\in\Omega_\infty$.
\end{proof}
In subsequent steps we treat points sent into gaps in the previous
steps. This completes the construction of $\mathcal Q$. 
(a) is a direct consequence of the construction.
Since $f^{R(\omega)}=3\Lambda^\pm$ and $f^{R(\omega)}|\omega$ is extended to a diffeomorphism
onto $3\Lambda^\pm$,
the Koebe Principle \cite[Chap. IV.1]{MelStr93} yields
(b).
\end{proof}
Define an induced map $F\colon\Omega_\infty\circlearrowleft$ by
$F|\omega=f^{R(\omega)}$ for $\omega\in\mathcal Q$. 
By Lemma
\ref{lattice} and \cite[Chap. V.2 Thm. 2.2]{MelStr93}, there exists
an $F$-invariant probability measure $\nu_0$ that is absolutely
continuous with respect to the Lebesgue measure ${\rm Leb}_{\Omega_\infty}$ on
$\Omega_\infty$, with the density
$d\nu_0/d{\rm Leb}_{\Omega_\infty}$ uniformly bounded away from
zero and infinity. 
The next measure estimate of the tail 
$\{R> n\} =\{x\in\Omega_\infty\colon R(x)> n\}$
implies that $\nu_0$ projects down to the acip $\mu$ for $f$.
\begin{lemma}\label{exptail}
For all large $n$, $|\{R> n\}| \leq e^{-\frac{\lambda}{10}n}$.
\end{lemma}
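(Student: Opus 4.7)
The plan is to decompose the regular return $R$ into its constituent pieces, estimate the tail of each, and combine via a geometric/Chernoff-type argument. For $x\in\Omega_\infty$ let $k=k(x)$ be the smallest index with $f^{R_k(x)}x\in\Omega_\infty$, and write
$$R(x)=S_1(x)+\sum_{i=1}^{k-1}\bigl(g_i(x)+\widetilde S_{i+1}(x)\bigr),$$
where $g_i$ is the order of the gap containing $f^{R_i}x$ and $\widetilde S_{i+1}=R_{i+1}-R_i-g_i$ is the waiting time between exiting the gap and the next stopping time.

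Three tail estimates form the core. First, for the initial stopping time, Lemma \ref{escape} applied with $m=N$ (for which $f^N\Lambda$ is free) gives $|\{S_1\geq N+n\}|\leq C\zeta^n|\Lambda|$ for $n\geq\varepsilon^{1/2}N$. Second, the argument inside the proof of Lemma \ref{positi} bounds a single gap by $|\Omega_{g-1}\setminus\Omega_g|\leq 3C_\varepsilon e^{-(\varepsilon-10\varepsilon^2/\lambda)g}|\Lambda|$; pulling this back under $f^{R_i}$ using the Koebe-type distortion of Lemma \ref{lattice}(b) yields an exponential conditional tail for $g_i$. Third, for $\widetilde S_{i+1}$, I would re-apply Lemma \ref{escape} to the element containing $f^{R_i+g_i+p}x$ just after the bound period $p$ following the gap-entry, whose image is free by the sublemma inside the proof of Lemma \ref{positi}; with $m=R_i+g_i+p$ the threshold condition $n\geq\varepsilon^{1/2}m$ is acceptable for $n$ large.

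Next I observe that at each regular return the conditional probability of landing in $\Omega_\infty$ is bounded below by a uniform positive constant: indeed $|\Omega_\infty|\geq(1/2)|\Lambda|$ by Lemma \ref{positi}, together with the bounded distortion of $f^{R_i}$ onto $\Lambda^\pm$. Hence $k$ has a geometric tail $|\{k>K\}|\leq cq^K|\Lambda|$ with $q<1$. Writing $\{R>n\}$ as a disjoint union over $K$, for each $K$ the event decomposes over itineraries $(S_1,g_1,\widetilde S_2,\ldots,g_{K-1},\widetilde S_K)$ whose entries sum to at least $n$. A Stirling-style count of such itineraries (in the spirit of \eqref{counta}) combined with the three tail estimates yields a bound of the form $C^Ke^{-\rho n}q^K|\Lambda|$; summing over $K$ and absorbing polynomial prefactors into the ``$n$ sufficiently large'' clause delivers the claimed estimate.

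The main obstacle is the quantitative matching of the target rate $\lambda/10$. The exponential rate coming out of a naive Chernoff combination is $\rho\geq\min\{-\log\zeta,\;\varepsilon(1-10\varepsilon/\lambda)/2\}$, which for small $\varepsilon$ is dominated by $\varepsilon/2$ and is slower than $\lambda/10$. Reaching the target therefore requires either allowing the threshold $n_0$ in ``for all large $n$'' to depend on $\varepsilon$ and absorbing all $\varepsilon$-dependent prefactors there, or a sharper estimate that exploits the strong expansion acquired during the $g_i$ iterations after a gap-entry in order to offset the slow gap-order tail. In any case, uniform bookkeeping of conditional distortions along the whole chain $R_1<R_2<\cdots<R_k$ is the technical heart of the argument.
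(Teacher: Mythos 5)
Your decomposition of $R$ into waiting times $S_1 + \sum_i (g_i + \widetilde S_{i+1})$ is not the paper's route, and the rate mismatch you flag at the end is a genuine gap, not a bookkeeping issue. The conditional tail of the gap order is $|\{g_i\geq g\}| \lesssim e^{-(\varepsilon-10\varepsilon^2/\lambda)g}$, so any Chernoff-type combination along your decomposition can only yield $|\{R>n\}|\lesssim e^{-\rho n}$ with $\rho$ of order $\varepsilon$. Since $\varepsilon$ is taken arbitrarily small relative to $\lambda$, this is strictly slower than the target rate $\lambda/10$, and no choice of $n_0$ can fix a slower exponential rate --- your first proposed remedy (absorb $\varepsilon$-dependent prefactors into ``$n$ large'') is therefore a non-starter; a prefactor can be absorbed, a deficient exponent cannot. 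Your second remedy (exploit the expansion acquired during the $g_i$ iterates) does point in the right direction but is left entirely unexecuted, and it is precisely where the real work lies.

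The paper avoids the gap-order tail altogether. It fixes an element $\omega$ of $\tilde{\mathcal P}_n$ meeting $\{R>n\}$ and, for the nested sequence $\omega_i\in\tilde{\mathcal P}_{R_i}$ containing $\omega$, proves the \emph{size} ratio bound $|\omega_{i+1}|/|\omega_i|\leq 3\,e^{-\frac{\lambda}{3}(R_{i+1}-R_i)}$ (Sublemma~\ref{d}) via the mean value theorem, the Koebe principle, and the $e^{\lambda/3}$-per-step derivative growth from Lemma~\ref{exp2} --- a bound that holds regardless of whether those iterates are spent in a gap. Combined with the entropy count $\#\mathcal Q(\omega_i,k_{i+1})\leq e^{6\varepsilon k_{i+1}}$ extracted from the proof of Lemma~\ref{escape}, this gives $|\mathcal Q(k_1,\ldots,k_j)|\leq e^{-\frac{\lambda}{4}\sum k_i}$ (Sublemma~\ref{sb1l}), so the cost per unit of $R$ is governed by $\lambda/3$, not by $\varepsilon$. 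There is also a second case (Sublemma~\ref{sbl2}) that you do not address at all: when the regular return times accumulated to $\sum k_i < n/2$, one instead uses that the element is uninterruptedly contracted between the last regular return and time $n$, yielding $|\omega|\leq e^{-\frac{\lambda}{4}n}$ directly. Summing the two cases over the combinatorial multiplicity $e^{\beta K}$ (with $\beta\to 0$ as $N\to\infty$, echoing \eqref{counta}) gives the claimed $e^{-\frac{\lambda}{10}n}$. In short: the paper estimates measures of nested partition elements by derivative growth and a small combinatorial count, never by conditioning on gap orders, and that structural choice is what makes the rate $\lambda/10$ (rather than $\varepsilon$) attainable.
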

\begin{proof}
For $0\leq k\leq n$, let
$$\mathcal P_k'=\{\omega\in\tilde{\mathcal P}_k\colon \omega\cap\{R> n\}\neq\emptyset\}.$$
Let $\omega\in\mathcal P_k'$.
By construction, all points in $\omega$ share the same sequence of regular return times up to time $k$, which we simply denote by $0=:R_0<R_1<R_2<\cdots\leq k.$
For $i\geq0$ let $\omega_i$
denote the element of $\tilde{\mathcal P}_{R_i}$ which contains
$\omega$.

\begin{sublemma}\label{d}
$R_{i+1}-R_{i}\geq N$ and $|\omega_{i+1}|/|\omega_i|\leq
3e^{-\frac{\lambda}{3} (R_{i+1}-R_{i})}.$
\end{sublemma}
\begin{proof}
The first inequality follows from
$p(f^{R_i}\omega_i)\geq N$ and that
$f^{R_{i+1}}\omega_i$ is free. 
The mean value theorem gives 
$|\Lambda^+|=|f^{R_i}\omega_{i}|=|Df^{R_i}(x)||\omega_{i}|$ 
for some $x\in\omega_i$,
and $|f^{R_i}\omega_{i+1}|=|Df^{R_i}(y)||\omega_{i+1}|$ 
for some $y\in\omega_{i+1}$.
The Koebe Principle implies $|Df^{R_i}(x)|\leq 3|Df^{R_i}(y)|,$ and thus
$|\omega_{i+1}|/|\omega_i|\leq
3|f^{R_{i}}\omega_{i+1}|/|\Lambda^+|.$
In addition
$
|\Lambda^+|=|f^{R_{i+1}-R_i}(f^{R_{i}}\omega_{i+1})|\geq  e^{\frac{\lambda}{3}(R_{i+1}-R_i)}|f^{R_{i}}\omega_{i+1}|,$ and thus
the second inequality holds.
\end{proof}

Let $1\leq j\leq [n/N]$.
For a $j$-string $(k_1,\ldots,k_j)$ of positive
integers, let
$$\mathcal Q(k_1,\ldots,k_j)=\left\{\omega\in\mathcal P_{\sum_{i=1}^jk_i}'
\colon  R_{i}(\omega)=k_1+k_2+\cdots+ k_{i}\ \text{for every}\
1\leq  i\leq j\right\},$$
and let $|\mathcal Q(k_1,\ldots,k_j)|=\sum_{\omega\in \mathcal Q(k_1,\ldots,k_j)}|\omega|$.

\begin{sublemma}\label{sb1l}
$|\mathcal Q(k_1,\ldots,k_j)| \leq e^{-\frac{\lambda}{4}\sum_{i=1}^jk_i}.$
\end{sublemma}
\begin{proof}
For each $1\leq i<j$ and
$\omega_{i}\in \mathcal Q(k_1,\ldots,k_{i})$, let $$\mathcal
Q(\omega_{i},k_{i+1})= \{\omega_{i+1}\in\mathcal
Q(k_1,\ldots,k_{i+1})\colon \omega_{i+1}\subset\omega_{i}\}.$$ 
Using the second inequality of Sublemma \ref{d} and 
$\#\mathcal Q(\omega_{i},k_{i+1})\leq e^{6\varepsilon k_{i+1}}$ which 
follows from the proof of Lemma \ref{escape} we get
\begin{align*}
|\mathcal Q(k_1,\ldots,k_{i+1})|
&=\sum_{\omega_{i}\in \mathcal
Q(k_1,\ldots,k_{i})} |\omega_{i}|\sum_{\omega_{i+1}\in\mathcal
Q(\omega_{i},k_{i+1}) } \frac{|\omega_{i+1}|}
{|\omega_{i}|} \\
&\leq  \sum_{\omega_{i}\in \mathcal
Q(k_1,\ldots,k_{i})} |\omega_{i}|e^{6\varepsilon k_{i+1}}\cdot 3
e^{-\frac{\lambda}{3} k_{i+1}} \\
&\leq e^{-\frac{\lambda}{4} k_{i+1}} \sum_{\omega_{i}\in \mathcal
Q(k_1,\ldots,k_{i})} |\omega_{i}|=e^{-\frac{\lambda}{4}k_{i+1}}|\mathcal
Q(k_1,\ldots,k_{i})|.\end{align*}
 Using this inductively, combining the result with
$|\mathcal Q(k_1)|\leq e^{-\frac{\lambda}{4}k_1}$ and then substituting
$i=j-1$ we obtain the desired inequality.
\end{proof}

\begin{sublemma}\label{sbl2}
If $\sum_{i=1}^jk_i<[n/2]$, then
$|\mathcal Q(k_1,\ldots,k_j)|\leq e^{-\frac{\lambda}{5}n}.$
\end{sublemma}
\begin{proof}
Let $\omega\in\mathcal Q(k_1,\ldots,k_j)$.
If $f^n\omega$ is bound, then let $k<n$ denote the free return with 
bound period $p$
with $k<n<k+p$. Since $\omega$ intersects $\Omega_\infty$,
$d(0,f^k\omega)\geq \delta e^{-\varepsilon k},$
and thus $k+p-n\leq \frac{3\varepsilon}{\lambda} n$. Then for all $x\in\omega$,
$|Df^n(x)|=|Df^{k+p}(x)|/|Df^{k+p-n}(f^{n}x)|\geq 4^{-(k+p-n)}e^{\frac{\lambda}{3}(k+p)}\geq 4^{-\frac{3\varepsilon}{\lambda} n}e^{\frac{\lambda}{3}n}\geq e^{\frac{\lambda}{4}n},$
and thus $|\omega|\leq e^{-\frac{\lambda}{4}n}$.
If $f^n\omega$ is free, then 
$|\omega|\leq e^{-\frac{\lambda}{4}n}$.
Hence
$$
|\mathcal Q(k_1,\ldots,k_j)|\leq e^{-\frac{\lambda}{4}n}\#\mathcal Q(k_1,\ldots,k_j).
$$
From the proof of Lemma \ref{escape} and the assumption $\sum_{i=1}^jk_i<[n/2]$
we have
$\#\mathcal Q(k_1,\ldots,k_j)\leq e^{5\varepsilon\sum_{i=1}^jk_i}
\leq e^{\frac{5\varepsilon}{2} n}$, and so the desired inequality follows.
\end{proof}
Returning to the proof of Lemma \ref{exptail},
let 
$\mathcal Q_{n}^j$ denote the collection of elements of
$\mathcal P_n'$ for which the number of regular
return times up to time $n$ is equal to $j$.
 Let $|\mathcal Q_n^j|=\sum_{\omega\in \mathcal Q_n^j}|\omega|$.
Sublemma \ref{sb1l} and Sublemma \ref{sbl2}
yield
\begin{align*}
|\mathcal Q_{n}^j|&=
\sum_{K=1}^{[n/2]-1}\sum_{k_1+\cdots+k_j=K}|\mathcal
Q(k_1,\ldots,k_j)|+
\sum_{K=[n/2]}^n\sum_{k_1+\cdots+k_j=K}|\mathcal
Q(k_1,\ldots,k_j)|\\
&\leq e^{-\frac{\lambda}{5}n}\sum_{K=1}^{[n/2]-1} \#\left\{ (k_1,\ldots,k_j)\colon \sum_{i=1}^j
k_i=K\right\} \\
&+ \sum_{K=[n/2]}^n e^{-
\frac{\lambda}{4}K}\#\left\{ (k_1,\ldots,k_j)\colon \sum_{i=1}^j
k_i=K\right\}\\
&\leq e^{-
\frac{\lambda}{5}n}\sum_{K=1}^{[n/2]-1}e^{\beta K}+ \sum_{K=[n/2]}^n e^{-
(\frac{\lambda}{4}-\beta)K},
\end{align*}
where $\beta\to0$ as $N\to\infty$.
Hence $|\mathcal Q_{n}^j|\leq e^{-\frac{\lambda}{9}n}$ and
$|\{R > n\}|\leq\sum_{j=1}^{[n/N]}|\mathcal Q_{n}^j|\leq e^{-\frac{\lambda}{10}n}$.
\end{proof}

\subsection{Reduction to lower floors of the tower}\label{tower}
Let
$$\Delta=\{(x,l)\colon x\in \Omega_\infty,\ \ l=0,1,\ldots,R(x)-1\},$$
which we call a {\it tower}, and define
$$\hat f(x,l)=\begin{cases}(x,l+1)\ \ \text{ if }l+1<R(x) &\\
(f^{R(x)} x,0)\ \text{ if }l+1=R(x).\end{cases}$$ The point
$(x,l)$ is considered to be climbing the tower in the first case and
falling down from the tower in the second case. Define a
projection $\pi\colon\Delta\to X$ by $\pi(x,l)=f^lx$.
 Let
 $\Delta_l=\{(x,l)\in\Delta\colon R(x)>l\}.$ Note that
$\Delta_0=\{(x,0)\colon x\in \Omega_\infty\}$. Let
$\tau_l\colon\{R>l\}\to\Delta_l$ denote the canonical
identification $\tau_l(x)=(x,l)$.
Fix a measurable structure on $\Delta$ such that $\pi$ is
measurable, and define a probability
measure $\hat \mu$ on $\Delta$ by $$\hat \mu=
\frac{1}{\nu_0(R)}\sum_{l=0}^\infty (\tau_l)_*\nu_0|\{R>l\},$$
where  $\nu_0(R)<\infty$ by Lemma \ref{exptail}.
Observe that $\pi_*\hat \mu=\mu$.

We reduce the desired upper estimate
in the proposition to an estimate on the lower floors of the tower.
For each $l\geq 0$, let $\mathcal P_l=\tilde{\mathcal P}_l|\{R>l\}$. 
Using $\tau_l$ we transplant the partition
$\mathcal P_l$ to $\Delta_l$ and also denote it by $\mathcal P_l$. Let 
 $\mathcal D=\bigcup_{l\geq0}\mathcal P_l$ denote
the resultant partition of $\Delta$. Let $\hat\varphi_j=
\varphi_j\circ\pi$. Let
$$\mathcal B_n=\left\{A\in\bigvee_{i=0}^{n-1} \hat f^{-i}\mathcal
D\colon \frac{1}{n}S_n\hat\varphi_j(x)\geq b_j\ \ j=1,\ldots,d
\text{ for some $x\in A$} \right\}.$$ 
 If $\mathcal
B_n=\emptyset$ there is nothing to prove, and hence we assume
$\mathcal B_n\neq\emptyset$.
We have
$$\frac{1}{n}\log \mu\left\{\frac{1}{n}S_n\varphi_j \geq b_j
\right\}=\frac{1}{n}\log\hat \mu\left\{\frac{1}{n}S_n\hat\varphi_j
\geq b_j\right\}\leq\frac{1}{n}\log\hat\mu(\mathcal B_n),$$ where $\hat\mu(\mathcal
B_n)=\sum_{A\in\mathcal B_n}\hat \mu(A)$. 
Define
$$\mathcal B_n'=\left\{A\in\mathcal B_n\colon A\subset
\bigcup_{0\leq l\leq 30n}\Delta_l\right\}\ \ \text{and} \ \ \mathcal
B_n''=\left\{A\in\mathcal B_n\colon
A\subset\bigcup_{l>30n}\Delta_l\right\}.$$ 
Let $p_1\colon\Delta\to\Lambda$ denote the projection to the first coordinate. 
The next lemma enables us to compare $\hat\mu$ and ${\rm Leb}_{\Omega_\infty}$.
\begin{lemma}\label{mhat}
There exist $0<C_1<C_2$ such that for any $l\geq0$ and any 
measurable set $A\subset\Delta_l$ we have
$C_1|p_1A|\leq\hat \mu(A)\leq C_2|p_1A|$.
\end{lemma}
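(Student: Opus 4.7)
The proof should be essentially a bookkeeping argument that unpacks the definition of $\hat\mu$ and invokes the density bounds on $\nu_0$ established immediately after Lemma \ref{lattice}. Let me outline the steps.

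First, I would fix $l \geq 0$ and a measurable $A \subset \Delta_l$, and simply unwind the definition of $\hat\mu$. Because the sets $\Delta_0, \Delta_1, \ldots$ partition $\Delta$ and $A$ is contained in the single floor $\Delta_l$, only the $l$-th term in the series $\hat\mu = \frac{1}{\nu_0(R)} \sum_{l\geq 0}(\tau_l)_* \nu_0|\{R > l\}$ contributes. Since $\tau_l(x) = (x,l)$ is a bijection $\{R > l\} \to \Delta_l$ with inverse $p_1|_{\Delta_l}$, I get the identity
\[
\hat\mu(A) = \frac{1}{\nu_0(R)}\, \nu_0(p_1 A),
\]
where $p_1 A \subset \{R > l\} \subset \Omega_\infty$.

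Second, I would invoke the distortion-based construction of $\nu_0$: Lemma \ref{lattice}(b) gives bounded distortion for the induced map $F$, so the classical Folklore Theorem (cited here as \cite[Chap. V.2 Thm. 2.2]{MelStr93}) yields constants $0 < c_1 < c_2$ such that $c_1 \leq d\nu_0/d\mathrm{Leb}_{\Omega_\infty} \leq c_2$ almost everywhere on $\Omega_\infty$. Consequently,
\[
c_1 |p_1 A| \leq \nu_0(p_1 A) \leq c_2 |p_1 A|.
\]

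Finally, since $R$ is integrable by Lemma \ref{exptail}, $\nu_0(R)$ is a finite positive constant, and setting $C_1 = c_1/\nu_0(R)$ and $C_2 = c_2/\nu_0(R)$ finishes the proof. There is no genuine obstacle here: the only non-trivial inputs are the density bounds (already in hand from the previous subsection) and integrability of $R$ (from Lemma \ref{exptail}); the rest is a direct computation using the definition of $\hat\mu$.
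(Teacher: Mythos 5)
Your proof is correct and takes essentially the same approach as the paper: unwind the definition of $\hat\mu$ to obtain $\hat\mu(A)=\nu_0(p_1A)/\nu_0(R)$, then apply the two-sided density bounds on $d\nu_0/d\mathrm{Leb}_{\Omega_\infty}$ together with the finiteness of $\nu_0(R)$.
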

\begin{proof}
For any measurable $A\subset\Delta_l$ we have
 $\hat\mu(A)=\nu_0(p_1A)/ \nu_0(R)$, and 
the density of $\nu_0$ is uniformly bounded away from zero and infinity.
Hence the claim holds.
\end{proof}

By Lemma
\ref{exptail} and Lemma \ref{mhat},
\begin{equation*}\hat\mu(\mathcal B_n'')\leq
C_2\sum_{l>30n} |\{R>l\}| < 4
^{-n},\end{equation*} 
and thus for any $\nu\in\mathcal M_f$,
\begin{equation*}
\varlimsup_{n\to\infty}\frac{1}{n}\log\hat\mu(\mathcal B_n'')\leq-\log4\leq F(\nu).\end{equation*}
For the proof of the proposition
it suffices to show that for all large $n$ such that  $\mathcal B_n'\neq\emptyset$
there exists $\sigma\in\mathcal M_f$ satisfying \eqref{upper5} such that
\begin{equation}\label{upper3}
\frac{1}{n}\log\hat\mu(\mathcal B_n')\leq(1-\varepsilon^{1/5})F(\sigma)+2\varepsilon^{1/5}.
\end{equation}
For the rest of this paper we
assume $\mathcal B_n'\neq\emptyset$.

\subsection{Approximation by points quickly falling down from the tower}\label{recurrence}
For $n\geq N$ and
$\omega\in\mathcal P_n$, let $\tilde\omega$ denote the element of
$\tilde{\mathcal P}_n$ containing $\omega$, namely
$\omega=\tilde\omega\cap\{R>n\}$. 
Points in $\omega$ may climb the tower for a very long
period of time.
The next lemma indicates that
a positive definite fraction of points in $\tilde\omega$
quickly fall down to the ground floor $\Delta_0$.

\begin{lemma}\label{Mar}
There exists $n_0''\geq n_0'$ such that
for every $n\geq n_0''$  and
$\omega\in\mathcal P_n$ there exist  
 $\omega'\subset\tilde\omega\cap\Omega_\infty$ and
 $r=r(\omega')\in[n,(1+\varepsilon^{1/3})n]$ 
such that:

\begin{itemize}

\item[(a)] $|\omega'|\geq
e^{-\varepsilon^{1/3} n}|\tilde\omega|$;

\item[(b)] $f^r$ maps $\omega'$ bijectively onto $\Omega_\infty^\pm$.

\end{itemize}
\end{lemma}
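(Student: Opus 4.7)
The plan is to apply Lemma~\ref{escape} to $\tilde\omega$, show that most of $\tilde\omega$ reaches a stopping time of the $\mathcal F_k$-construction within the short window $[n,(1+\varepsilon^{1/3})n]$, bound the number of resulting partition pieces by a sub-$\varepsilon^{1/3}$-exponential using the itinerary count from that proof, pigeonhole to extract a single piece of measure $\geq e^{-\varepsilon^{1/3}n}|\tilde\omega|$, and finally pull $\Omega_\infty^\pm$ back through the stopping map with the help of Lemma~\ref{positi} and Lemma~\ref{izo}.

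First I would arrange to be at a free-state time near $n$. If $f^n\tilde\omega$ is free, set $m:=n$; otherwise $\tilde\omega$ lies in a bound period following some free return at $k<n$ with bound period $p$. Since $\omega=\tilde\omega\cap\{R>n\}\ne\emptyset$ forces $\tilde\omega\cap\Omega_\infty\ne\emptyset$, the opening of the proof of Lemma~\ref{positi} gives $\tilde\omega\subset\Omega_n\subset\Omega_k$, hence $d(0,f^k\tilde\omega)\geq\delta e^{-\varepsilon k}$, and Lemma~\ref{reclem1}(b) then forces $p<\varepsilon^{2/3}n$ for $\varepsilon$ small. Take $m:=k+p\in[n,n+\varepsilon^{2/3}n]$; $f^m\tilde\omega$ is free, and $\tilde\omega\in\tilde{\mathcal P}_{m-1}$ because no subdivision occurs during a bound period. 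Setting $n':=\lfloor(1+\varepsilon^{1/3})n-m\rfloor\geq\varepsilon^{1/3}n/2\geq\varepsilon^{1/2}m$, Lemma~\ref{escape} gives
\[
|\{x\in\tilde\omega:S_{e(\tilde\omega)}(x)>m+n'\}|\leq C\zeta^{n'}|\tilde\omega|\leq |\tilde\omega|/2
\]
for large $n$.

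Let $\mathcal R$ be the collection of elements $\omega''\in\mathcal F_{e(\tilde\omega)}|\tilde\omega$ with $S_{e(\tilde\omega)}(\omega'')\in[n,m+n']$; granting the iteration addressed below, $\mathcal R$ covers at least $|\tilde\omega|/2$. The itinerary count~\eqref{counta}, applied over the window of length at most $\varepsilon^{1/3}n$, gives $\#\mathcal R\leq e^{\eta n}$ with $\eta\ll\varepsilon^{1/3}$. Pigeonhole produces $\omega''\in\mathcal R$ with $|\omega''|\geq(1/2)e^{-\eta n}|\tilde\omega|$. Set $r:=S_{e(\tilde\omega)}(\omega'')\in[n,(1+\varepsilon^{1/3})n]$ and $\omega':=(f^r|_{\omega''})^{-1}(\Omega_\infty^\pm)$; since $f^r$ extends to a diffeomorphism of a neighborhood of $\omega''$ onto $3\Lambda^\pm$, it maps $\omega'$ bijectively onto $\Omega_\infty^\pm$, and Lemma~\ref{positi} together with Lemma~\ref{izo}(a) yields $|\omega'|\geq(2C_\delta)^{-1}|\omega''|\geq e^{-\varepsilon^{1/3}n}|\tilde\omega|$ for $n$ large. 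That $\omega'\subset\tilde\omega\cap\Omega_\infty$ then follows exactly as in Sublemmas~\ref{match} and~\ref{match2}: $\tilde\omega\subset\Omega_n$ and the stopping-time construction keep $\omega'\subset\Omega_k$ for every $k\leq r$, while $f^rx\in\Omega_\infty^\pm$ combined with the slow-recurrence estimate $|f^{k+r}x|\geq\delta e^{-\varepsilon k}>\delta e^{-\varepsilon(k+r)}$ handles times $k>r$.

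The expected main obstacle is the sub-case where the parent stopping time $S_{e(\tilde\omega)-1}$ is itself less than $n$, so $S_{e(\tilde\omega)}$ may still dip below $n$ on some pieces. One then has to re-apply Lemma~\ref{escape} on each offending sub-element (which is itself an $\tilde{\mathcal P}$-element right after its next free return) in order to advance the first stopping time above $n$ into the target window. Each iteration costs an extra factor $C\zeta^{n'}$ and contributes a further $e^{C\varepsilon\cdot\varepsilon^{1/3}n}$ to the counting bound, so only finitely many iterations are effective and the cumulative $\eta$ stays well below $\varepsilon^{1/3}$; carrying this iteration through and verifying that all sub-elements remain in suitable $\tilde{\mathcal P}_{m-1}$-position for Lemma~\ref{escape} to keep applying is where the bulk of the technical work will sit.
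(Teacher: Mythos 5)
Your proof follows the same route as the paper's: locate a free time $m$ close to $n$ (the paper's $\hat n$), apply Lemma~\ref{escape} to see that most of $\tilde\omega$ reaches a stopping time of $\mathcal F_{e(\tilde\omega)}$ within a short window above $m$, count the pieces, pigeonhole one of measure $\geq e^{-\varepsilon^{1/3}n}|\tilde\omega|$, and pull back $\Omega_\infty^\pm$ using bounded distortion (Koebe/Lemma~\ref{izo}) and Lemma~\ref{positi}. This is precisely what the paper does in Sublemma~\ref{suclaim} and the lines that follow, so the strategy is essentially identical.

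Two remarks. First, the ``expected main obstacle'' on which you place the bulk of the technical work --- iterating Lemma~\ref{escape} because some pieces of $\mathcal F_{e(\tilde\omega)}|\tilde\omega$ might have $S_{e(\tilde\omega)}<n$ --- is not an obstacle at all. By the definition of $e(\tilde\omega)$, $\tilde\omega$ is \emph{not} contained in a single element of $\mathcal F_{e(\tilde\omega)}$. If some $\omega''\in\mathcal F_{e(\tilde\omega)}$ meeting $\tilde\omega$ had $S_{e(\tilde\omega)}(\omega'')=s<n$, then $\omega''\in\tilde{\mathcal P}_s$, and since $\tilde{\mathcal P}_n$ refines $\tilde{\mathcal P}_s$ and $\tilde\omega\in\tilde{\mathcal P}_n$, disjointness of $\tilde{\mathcal P}_s$-elements forces $\tilde\omega\subset\omega''$, contradicting the minimality defining $e(\tilde\omega)$. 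So every element of $\mathcal F_{e(\tilde\omega)}|\tilde\omega$ already has stopping time $\geq n$, and no iteration is needed; your final paragraph can be deleted.

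Second, there is a small genuine gap. You pigeonhole over $\mathcal R=\{\omega'':S_{e(\tilde\omega)}(\omega'')\in[n,m+n']\}$, but the step showing $\omega'\subset\Omega_\infty$ via Sublemmas~\ref{match} and~\ref{match2} uses that $\omega''\cap\Omega_\infty\neq\emptyset$ (that is what gives $\omega''\subset\Omega_{r-1}$ to begin the argument), and the element you extract by pigeonhole need not meet $\Omega_\infty$. The fix is exactly what the paper does in Sublemma~\ref{suclaim}: combine Lemma~\ref{positi}, giving $|\tilde\omega\setminus\Omega_\infty|\leq(1/2)|\tilde\omega|$, with the Lemma~\ref{escape} estimate, so that the set of points in $\tilde\omega\cap\Omega_\infty$ lying in pieces with $S_{e(\tilde\omega)}<m+n'$ still has measure $\geq(1/3)|\tilde\omega|$; then pigeonhole over the pieces meeting $\Omega_\infty$. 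With that modification and the deletion above, your argument coincides with the paper's.
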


\begin{proof}
Let $\hat n=\min\{i\geq n\colon f^i\tilde\omega\text{ is free}\}$.

\begin{sublemma}\label{suclaim}
$\left|\left\{ S_{e(\tilde\omega)}< \hat n+\varepsilon^{1/2} n\right\}\cap\tilde\omega\cap \Omega_\infty\right|\geq
(1/3)|\tilde\omega|.$
\end{sublemma}
\begin{proof}
If the reverse inequality were true, then using
$|\tilde\omega\setminus \Omega_\infty|\leq(1/2)|\tilde\omega|$ which follows from Lemma \ref{positi} we would get
$\left|\left\{S_{e(\tilde\omega)}< \hat n+\varepsilon^{1/2} n|\tilde\omega\right\}\right|<1/2+1/3$.
This yields
a contradiction to $\left|\left\{S_{e(\tilde\omega)}\geq \hat n+\varepsilon^{1/2} n|\tilde\omega\right\}\right|\leq C\zeta^n$ obtained from Lemma \ref{escape} provided
$n$ is sufficiently large.
\end{proof}

By Sublemma \ref{suclaim} and $\#\left\{ \omega'\in
\mathcal F_{e(\tilde\omega)}|\tilde\omega\colon  S_{e(\tilde\omega)}(\omega')< \hat n+\varepsilon^{1/2} n\right\}
\leq \sum_{i=0}^{[\varepsilon^{1/2}n]}2^{i+1}\leq 3^{\varepsilon n}, $ 
one can choose an integer $r\in[\hat n,\hat n+\varepsilon^{1/2} n]\subset[n,(1+\varepsilon^{1/3})n]$ and
$\omega_0\in\mathcal F_{e(\tilde\omega)}|\tilde\omega$ intersecting $\Omega_\infty$ such that $S_{e(\tilde\omega)}(\omega_0)=r$ and
$|\omega_0|
\geq(1/3)3^{-\varepsilon^{1/2} n}|\tilde\omega|.$
Define $\omega'\subset\omega_0$ by
 $\omega'=(f^{r}|\omega_0)^{-1}\Omega_\infty^\pm$
if $f^{r}\omega_0=\Lambda^\pm$, respectively.
Since $\omega_0\cap\Omega_\infty\neq\emptyset$,
$\omega_0\subset\Omega_r$. This and $f^{r}\omega_0=\Lambda^\pm$ together 
imply $\omega_0\subset\Omega_\infty$.
The bounded distortion 
in Lemma \ref{izo} and Lemma \ref{positi} yield
$|\omega'|\geq(C_\varepsilon^{-1}/2)|\omega_0|\geq e^{-\varepsilon^{1/3}n}|\tilde\omega|.$
Hence (a) holds. (b) is obvious from the construction.
\end{proof}

\subsection{Construction of a horseshoe}\label{horseshoe}
 Let $L_1,\ldots,L_q$ be a collection of pairwise disjoint closed intervals in $[f^20,f0]$ and $m$ a positive integer. We say $\{L_i\}_{i=1}^q$ \emph{generates a horseshoe for} $f^m$ if $f^m$ maps each $L_i$ ($1\leq i\leq q$) diffeomorphically onto the same interval containing $\bigcup_{i=1}^qL_i$ in its interior.

\begin{lemma}\label{horse}
For all large $n$ there exist a
 collection $L_1,\ldots,L_q$ of closed intervals and 
an integer $m\in[(1-\varepsilon^{1/5})n,(1+\varepsilon^{1/5}) n]$ 
 such that:

\begin{itemize}
\item[(a)] $\{L_i\}_{i=1}^q$ generates a horseshoe for $f^m$;

\item[(b)] $\sum_{i=1}^q |L_i|\geq 
e^{-\varepsilon^{1/5} n}\hat\mu(\mathcal B_n')$;

\item[(c)] for all 
$x\in \bigcap_{j=0}^\infty(f^m)^{-j}\left(\bigcup_{i=1}^q L_i\right)$, 
$(1/m)S_m\varphi_j(x)\geq
b_j-\varepsilon^{1/2}$, $j=1,\ldots,d$.
\end{itemize}
\end{lemma}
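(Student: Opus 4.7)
The plan is to distill from $\mathcal{B}_n'$, via a cascade of pigeonholes, a sub-family whose members share a common initial floor $l_0$, a common side of $\Omega_\infty$, a common Lemma \ref{Mar}-return time $r$, and a common target side; then invoke Lemma \ref{Mar} to assign to each surviving $A$ a controlled escape interval, and finally assemble these intervals into the horseshoe. First I would truncate $\mathcal{B}_n'$ to the cells sitting at floors $l \leq \varepsilon^{1/5}n/2$: by Lemma \ref{exptail} and Lemma \ref{mhat}, the complementary $\hat\mu$-mass decays like $e^{-c\varepsilon^{1/5}n}$ and is either absorbed into (b) when $\hat\mu(\mathcal{B}_n')$ is itself this small or discarded at negligible cost otherwise. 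Two pigeonholes---first on the initial $(l_0,\pm)$, then on the common $(r,\pm)$ extracted from Lemma \ref{Mar}---lose only polynomial factors in $n$, swallowed by $e^{-\varepsilon^{1/5}n}$. Each surviving $A$ has $p_1(A)$ trapped in some cell $\tilde\omega_A$ of a partition at least as fine as $\tilde{\mathcal{P}}_n$, since the $\hat f$-itinerary of length $n$ from floor $l_0$ constrains $p_1(A)$ through the visited $\tilde{\mathcal{P}}$-cells (with the regular-return identifications built into the tower).

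For each distinct $\tilde\omega$ surviving the pigeonhole I would extract from the proof of Lemma \ref{Mar} the ambient interval $\omega^\ast_{\tilde\omega}$ (the $\omega_0$ used there) satisfying $f^r(\omega^\ast_{\tilde\omega})=\Lambda^+$, rather than merely its slow-recurrent subset $\omega'$; the inclusion $\omega' \subset \omega^\ast$ preserves the Lemma \ref{Mar}(a) estimate $|\omega^\ast_{\tilde\omega}|\geq e^{-\varepsilon^{1/3}n}|\tilde\omega|$. Set $L_{\tilde\omega}:=f^{l_0}(\omega^\ast_{\tilde\omega})$ and $m:=l_0+r$, so $f^m(L_{\tilde\omega})=f^{l_0}(\Lambda^+)=:J$ and $\bigcup L_{\tilde\omega}\subset J$. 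The range constraint $m\in[(1-\varepsilon^{1/5})n,(1+\varepsilon^{1/5})n]$ is met because $l_0\leq\varepsilon^{1/5}n/2$ and the proof of Lemma \ref{Mar} actually delivers $r\leq(1+\varepsilon^{1/2})n$ (via the $\varepsilon^{1/2}n$ bound in Sublemma \ref{suclaim}). The size estimate (b) follows from $|L_{\tilde\omega}|\geq C^{-1}|Df^{l_0}|\cdot e^{-\varepsilon^{1/3}n}|\tilde\omega|$ by Koebe distortion, summed over the (assumed-disjoint) surviving cells and compared to $\hat\mu$ via Lemma \ref{mhat}.

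For (c), given $x\in L_{\tilde\omega_0}$ inside the horseshoe, I would write $x=f^{l_0}(x_0)$ with $x_0 \in\omega^\ast_{\tilde\omega_0}\subset\tilde\omega_{A_0}$ and pick a witness $y^\ast\in p_1(A_0)\subset\tilde\omega_{A_0}$ realizing $A_0$'s defining Birkhoff inequality. The sum transforms as $\sum_{i=0}^{m-1}\varphi_j(f^i(x))=\sum_{i=l_0}^{l_0+m-1}\varphi_j(f^i(x_0))$, overlapping the witness range $[l_0,l_0+n-1]$. Three error sources control the difference: (i) on the common range, $|\varphi_j(f^i(x_0))-\varphi_j(f^i(y^\ast))|\leq\text{Lip}(\varphi_j)|f^i(\tilde\omega_{A_0})|$, whose sum telescopes into $O(|\Lambda^+|)$ by the distortion estimates of Lemma \ref{izo} and is tiny once $N$ is chosen so that $|\Lambda^+|\leq\varepsilon^{1/2}$; (ii) the trailing piece of length $m-n\leq\varepsilon^{1/2}n$ contributes $\leq\varepsilon^{1/2}\|\varphi_j\|_\infty$; (iii) the $1/n\leftrightarrow 1/m$ reweighting costs at most another $\varepsilon^{1/2}\|\varphi_j\|_\infty$. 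Combining yields $(1/m)S_m\varphi_j(x)\geq b_j-\varepsilon^{1/2}$ once constants are absorbed into the small parameter.

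The main obstacle is the disjointness of the $L_{\tilde\omega}$'s inside $J$: the $\tilde\omega$'s are disjoint in $\Lambda^+$, but $f^{l_0}|\Lambda^+$ need not be injective because critical-orbit excursions of ancestor cells can fold their images together. I would address this by recalling that distinct $\tilde{\mathcal{P}}_{l_0}$-cells were separated at some earlier free return $k\leq l_0$ where their $f^k$-images fall into distinct $I_{p,j}$-components, and then the bound/free analysis of Sect.\ \ref{combinatorics} combined with the monotonicity of $f$ on each side of the critical point preserves their separation under the remaining $f^{l_0-k}$-iterates; whatever residual overlap remains can be trimmed by a Vitali-style extraction at the cost of a constant factor, safely absorbed by $e^{-\varepsilon^{1/5}n}$.
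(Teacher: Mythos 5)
Your approach is genuinely different from the paper's, and it has a gap that I do not think your proposed repair closes.

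The paper does not pigeonhole on the initial floor and then push forward; instead, for each $A$ it defines a modified return time $t_A$ (built from $n-k_A$, the Lemma \ref{Mar} return $r(\omega_A')$, and the first free time $\ell_A$ after $l_A$), pigeonholes on $t_A=m_0$, pushes the resulting cells to $K_i=f^{\ell_{A_i}}\hat K_i\subset(f^20,f0)$, and then \emph{uses topological mixing (A4)} to manufacture $m_1$ and intervals $I^\pm\subset\Lambda^\pm$ with $f^{m_1}(I^\pm)\supset\bigcup K_i$, finally defining $L_i=(f^{m_0}|K_i)^{-1}I^\pm$. The $L_i$ are pullbacks through distinct monotone branches of $f^{m_0}$ dictated by the tower combinatorics, and this is what delivers disjointness. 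Your proposal avoids (A4) entirely (note that Lemma \ref{horse} is the only place (A4) enters the whole paper), and instead tries to close the loop with $J=f^{l_0}(\Lambda^+)$ and $L_{\tilde\omega}=f^{l_0}(\omega^\ast)$. The issue you yourself flag as ``the main obstacle'' is indeed fatal as written: distinct $\tilde{\mathcal P}_{l_0}$-cells in $\Lambda^+$ can have $f^{l_0}$-images that genuinely overlap, because the separating event between two cells is typically a passage to the \emph{two sides} of the critical point at some free return $k<l_0$, after which $f$ \emph{folds} their images together --- monotonicity of $f$ on each side is precisely what does not preserve separation across the fold. The number of such overlaps is not bounded by a constant, so a Vitali-type extraction can cost an unbounded (indeed exponential-in-the-number-of-returns) factor, which is not controlled against $e^{-\varepsilon^{1/5}n}$. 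For the same reason, $J=f^{l_0}(\Lambda^+)$ need not even be an interval, so the horseshoe target set is ill-defined, and the interior-containment requirement in the definition of a horseshoe is not addressed.

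Two smaller points. First, the bookkeeping on $m$ is off: if $L_{\tilde\omega}=f^{l_0}(\omega^\ast)$ and $f^r(\omega^\ast)=\Lambda^+$, then $f^m(L_{\tilde\omega})=f^{m+l_0}(\omega^\ast)$, so you want $m=r$ (not $m=l_0+r$) to land back on $f^{l_0}(\Lambda^+)$; the $m=l_0+r$ choice is the natural one for the paper's architecture, where the re-ascent by $l_0$ floors is incorporated through $t_A$ and $\ell_A$, not present in your direct-image construction. Second, your truncation to $l\leq\varepsilon^{1/5}n/2$ is an unnecessary extra step: the paper already restricts to $l\leq 30n$ in forming $\mathcal B_n'$ and absorbs the contribution of all floors through the $t_A$-pigeonhole (costing only $O(n)$) together with Sublemma \ref{estimate}; pigeonholing further on $l_0$ forces you to worry about how many $\tilde{\mathcal P}_{l_0}$-cells there are, which is what ultimately makes the disjointness unmanageable. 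The paper's use of free times $\ell_A$ (so that $K_i$ sits in $(f^20,f0)$ away from the critical point) and (A4) to splice the $K_i$ back to $I^\pm\subset\Lambda^\pm$ is precisely the device that sidesteps the folding problem your version runs into.
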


\begin{proof}
Let $A\in\mathcal B_n'$ and $l_A\geq 0$ be such that 
$A\subset\Delta_{l_A}$. In the first $n$-iterates under $\hat f$, the set $A$ continues
climbing the tower, or else falls down from the tower several times.
Hence, there exists an integer $k_A\in[0,n-1]\cup\{n+l_A\}$ such that $\hat
f^nA\in\mathcal D|\Delta_{k_A}$. Thus $p_1(\hat f^nA)$ is an element of $\mathcal
P_{k_A}$, which we denote by $\omega_A$. 
If $k_A\geq n_0''$, then
take $\omega'_A\subset
\tilde\omega_A\in\tilde{\mathcal P}_{k_A}$ for which the
conclusions of Lemma \ref{Mar} hold.
Define an interval $\tilde A$ containing $p_1A$ so that:
$\tilde A=\tilde \omega_A$ if $k_A=n+l_A$;
 $f^{n-k_A+l_A}\tilde A=\tilde\omega_A$ if $n_0''\leq k_A\leq n-1$;
$f^{n-k_A+l_A}\tilde A=\Lambda^\pm$ if $k_A<n_0''$.
Set
$$\ell_A=\min\{j\geq l_A\colon \text{$f^j\tilde A$ is
free}\},$$
and define
\begin{equation*}
t_A=\begin{cases}
n-k_A+r(\omega'_A)-\ell_A+l_A&\ \text{if}\ k_A\geq n_0'';\\
n-k_A-\ell_A+l_A&\ \text{if}\ k_A<n_0''.\end{cases}\end{equation*}

\begin{sublemma}\label{tA}
For any $A\in\mathcal B_n',$
$\left(1-\varepsilon^{1/2}\right)n\leq t_A\leq
\left(1+\varepsilon^{1/4}\right)n.$
\end{sublemma}
\begin{proof}
Lemma \ref{Mar}
gives $k_A\leq
r(\omega'_A)\leq\left(1+\varepsilon^{1/3}\right)k_A.$
By construction and Lemma \ref{reclem1}(b),
$l_A\leq \ell_A\leq\left(1+
6\varepsilon\right)l_A.$ Hence, if $k_A\geq n_0''$ then
$$t_A\leq n-k_A+r(\omega_A')\leq n+\varepsilon^{1/3}k_A\leq (1+31\varepsilon^{1/3})n<(1+\varepsilon^{1/4})n,$$
where the third inequality follows from $k_A\leq n+l_A$ and $l_A\leq 30n$.
On the other hand,
$$t_A\geq n-6\varepsilon l_A\geq\left(1-180\varepsilon\right)n\geq(1-\varepsilon^{1/2})n.$$ 
If $k_A<n_0''$, then clearly
$t_A\leq n$, and
$$t_A\geq n-k -6\varepsilon l_A\geq (1-180\varepsilon)n-n_0''
\geq(1-\varepsilon^{1/2})n,$$
where  the last inequality holds provided $n$ is sufficiently large. 
\end{proof}

By construction, for any $A\in\mathcal B_n'$ one can choose a set $A'\subset\Delta_{l_A}$ 
so that: $p_1(\hat f^{n}A')=\omega_A'$
if $k_A=n+l_A$;
$p_1(\hat f^{n-k_A}A')=\omega_A'$
if $n_0''\leq k_A\leq n-1$;
$A\subset A'$
and $p_1(\hat f^{n-k_A}A')=\Omega_\infty^\pm$
if $k_A< n_0''$.


\begin{sublemma}\label{estimate}
For any $A\in\mathcal B_n'$,
$\hat\mu(A')\geq e^{-\varepsilon^{1/4}n}\hat\mu(A).$
\end{sublemma}
\begin{proof}
Lemma \ref{mhat} 
gives
$$\frac{\hat \mu(A')}{\hat \mu(A)}\geq C_1C_2^{-1}\frac{|p_1A'|}{|p_1A|}.$$
As for the fraction of the right-hand-side, if $k_A= n+l_A$, then
using $\omega_A\subset\tilde\omega_A$ and 
Lemma \ref{Mar} 
we have
$$\frac{|p_1A'|}{|p_1A|}= \frac{|\omega_A'|}{|\omega_A|}
\geq \frac{|\omega_A'|}{|\tilde\omega_A|}\geq e^{-\varepsilon^{1/3}(n+l_A)}\geq
e^{-31\varepsilon^{1/3}n}.
$$
If $n_0''\leq k_A\leq n-1$, then additionally using
the bounded distortion we have
$$\frac{|p_1A'|}{|p_1A|}
\geq C_\varepsilon^{-1}\frac{|p_1(\hat f^{n-k}A')|}{|p_1(\hat f^{n-k}A)|}=
C_\varepsilon^{-1} \frac{|\omega_A'|}{|\omega_A|}
\geq C_\varepsilon^{-1} \frac{|\omega_A'|}{|\tilde\omega_A|}\geq
 C_\varepsilon^{-1} e^{-31\varepsilon^{1/3}n}.
$$
If $k_A<n_0''$, then 
$\frac{|p_1A'|}{|p_1A|}\geq 1$ because $A\subset A'$.
Consequently the desired inequality holds provided $n$ is sufficiently large.
\end{proof}

Returning to the proof of Lemma \ref{horse}, choose 
$m_0\in[(1-\varepsilon^{1/2})n,
(1+\varepsilon^{1/4})n]$ such that
\begin{equation}\label{horse1}
\sum_{A\in\mathcal B_n':t_A=m_0}\hat \mu(A')\geq
\frac{1}{2\varepsilon^{1/4} n}\sum_{A\in\mathcal B_n'}\hat \mu
(A').\end{equation} 
Set $q=\#\{A\in\mathcal B_n'\colon
t_A=m_0\}$ and $\{A_i\}_{i=1}^q=\{A\in\mathcal B_n'\colon
t_A=m_0\}$. For each $i\in[1,q]$ let
$\hat K_i$ denote the smallest closed
interval containing $p_1A_i'$, and define
$K_i=f^{\ell_{A_i}}\hat K_i$.
Then $K_i\subset (f^20,f0)$, and 
$f^{m_0}K_i=\Lambda^\pm$ because $f^{t_A}(f^{\ell_A}A')=\Omega_\infty^\pm$
for any $A\in\mathcal B_n'$. 
By (A4) it is possible to choose $m_1>0$ and
two closed intervals $I^\pm\subset\Lambda^\pm$ such that $f^{m_1}$ maps $I^\pm$ diffeomorphically onto the same interval containing $K_1,\ldots,K_q$. 
Define
$L_i=(f^{m_0}|K_i)^{-1}I^\pm$ if
$f^{m_0}K_i=\Lambda^\pm$, respectively.
By construction, $L_1,\ldots,L_q$ are pairwise disjoint. Then
$m=m_0+m_1\in[(1-\varepsilon^{1/5})n,(1+\varepsilon^{1/5})n]$ holds for sufficiently large $n$,
and $\{L_i\}_{i=1}^q$ generates a horseshoe for $f^{m}$.
In addition,
by Lemma \ref{mhat}, \eqref{horse1} and Sublemma \ref{estimate},
\begin{equation}\label{upper31}\sum_{A\in\mathcal B_n':t_A=m_0}\hat \mu(A')=
\sum_{i=1}^q\hat \mu(A_i')\geq
\frac{1}{2\varepsilon^{1/4} n}e^{-\varepsilon^{1/4} n}\hat\mu(\mathcal B_n')
\geq e^{-\varepsilon^{1/5}n}\hat\mu(\mathcal B_n').
\end{equation}
Since both
$f^{\ell_{A_i}}\hat K_i$ and $\hat K_i$ are free,
$|Df^{\ell_{A_i}}|\geq\delta$ on $\hat K_i$. Hence
$|K_i|=|f^{\ell_{A_i}}\hat K_i|\geq \delta |\hat K_i|\geq\delta |p_1A_i'|\geq C_2^{-1}\delta\cdot\hat \mu(A_i').$ Using this and the bounded distortion we get 
$|L_i|\geq C_\varepsilon^{-1}\frac{|I^+|}{|\Lambda^+|}|K_i|\geq C_\varepsilon^{-1}\frac{|I^+|}{|\Lambda^+|}C_2^{-1}\delta\cdot\hat \mu(A_i').$
Plugging this estimate into the left-hand-side of 
\eqref{upper31} yields Lemma \ref{horse}(b).
\medskip

For the proof of Lemma \ref{horse}(c) it suffices to show
$S_m\hat\varphi_j(x)\geq (b_j-\varepsilon^{1/2})m$ for all
$x\in \bigcup_{i=1}^q (\hat f)^{l_i-l_{A_i}}A_i$.
Pick $x_i\in
A_i\in\mathcal B_n$ such that $S_n\hat\varphi(x_i)\geq b_jn$ holds for
$j=1,\ldots,d$. We have
\[
|S_m\hat\varphi_j(\hat
f^{l_i-l_{A_i}}x_i)-S_n\hat\varphi_j(x_i)|\leq
(2(l_i-l_{A_i})+|m-n|)\|\varphi_j\|,\]
 where
$\|\varphi_j\|=\sup|\varphi_j|$. 
Since $|l_i-l_{A_i}|\leq \frac{3\varepsilon}{\lambda} l_{A_i}\leq \varepsilon^{2/3} n$ and
$|m-n|\leq \varepsilon^{2/3} n$ we have
$$S_m\hat\varphi_j(\hat
f^{l_i-l_{A_i}}x_i)\geq S_n\hat\varphi_j(x_i)-
(2(l_i-l_{A_i})+|m-n|)\|\varphi_j\|\geq b_jn-2\varepsilon^{2/3}n.$$ Hence, for each $i=1,\ldots,q$ and
$j=1,\ldots,d$,
\begin{equation}\label{con1}S_m\hat\varphi_j(\hat f^{l_i-l_{A_i}}x_i)\geq
\left(b_j-\varepsilon^{1/2}/2\right)m.\end{equation} 
\begin{sublemma}\label{bdd}
For any $n\geq1$ and
$\omega\in\tilde{\mathcal P}_{n-1}$,
$\sum_{i=0}^{n-1}|f^i\omega|\leq 10\delta^{-1}.$
\end{sublemma}

\begin{proof}
Let $n_1<\cdots<n_s<n$ denote all the free returns in the first
$n-1$-iterates of $\omega$, with $p_1,\ldots,p_{s}$ the
corresponding bound periods. 
Let $1\leq i\leq s$. For each $j\in[n_i+1,n_i+p_i-1]$, choose 
$\theta_j\in f^{n_i}\omega$ such that $|f^j\omega|=|f^{n_i}\omega|\cdot
|Df^{j-n_i}(\theta_j)|.$ Then $|f^{j-n_i}\theta_j-f^{j-n_i}0|\leq
e^{-\varepsilon(p_i-1)},$ and by the bounded distortion during the bound
period,
$$|Df^{j-n_i-1}(f\theta_j)|
\leq 2\cdot\frac{|f^{j-n_i}\theta_j-f^{j-n_i}0|}{|f\theta_j-f0|}
\leq \delta_{p_i}^{-2} e^{-\varepsilon (p_i-1)}
\leq 3 \delta_{p_i-1}^{-2} e^{-\varepsilon(p_i-1)}e^{\alpha\sqrt{p_i}}.$$ 
For the last
inequality we have used (\ref{size}). We also have
$|Df(\theta_j)|\leq 4\delta_{p_i-1}.$ Plugging these two derivative
estimates into the equality and summing the result over all $j$
gives
$$\sum_{j=n_i+1}^{n_i+p_i-1} |f^j\omega|
\leq |f^{n_i}\omega| \delta_{p_i-1}^{-1}  e^{-\frac{\varepsilon}{2}(p_i-1)} .$$
Summing this over all $i$ gives
\begin{align*}
\sum_{i=1}^s\sum_{j=n_i+1}^{n_i+p_i-1}|f^j\omega|
\leq\sum_{i=1}^s|f^{n_i}\omega| 
\delta_{p_i-1}^{-1} e^{-\frac{\varepsilon}{2}(p_i-1)} 
\leq\sum_{p\geq N}
\delta_{p-1}^{-1}e^{-\frac{\varepsilon}{2}(p-1)}
\sum_{i\colon p_i=p}|f^{n_i}\omega|.\end{align*} 
Let $n_{i_j}$, $j=1,\ldots,t$ denote
the subsequence of returns with the same bound period equal to $p$. By Lemma
\ref{reclem1} and Lemma \ref{exp2},  for all $\theta\in
f^{n_{i_j}}\omega$ we have $|Df^{n_{i_t}-n_{i_j}}(\theta)|\geq
e^{\frac{\lambda p}{3}(t-j)}$, and thus
$|f^{n_{i_j}}\omega|\leq e^{-\frac{\lambda p}{3}(t-j)}
|f^{n_{i_t}}\omega|$. We also have $|f^{n_{i_t}}\omega|\leq 2\delta_{p-1}$, 
and therefore
\[\sum_{i\colon
p_i=p}|f^{n_i}\omega|=\sum_{j=1}^t|f^{n_{i_j}}\omega|\leq
\sum_{j=1}^t e^{-\frac{\lambda p}{3}(t-j)}|f^{n_{i_t}}\omega| \leq
2|f^{n_{i_t}}\omega|\leq 4\delta_{p-1}.\]
Substituting
this estimate into the previous inequality gives
\begin{equation*}\label{sum2}\sum_{j\in\cup_{i=1}^s(n_i,n_i+p_i)}|f^j\omega|
\leq \sum_{p\geq N}e^{-\frac{\varepsilon}{2}(p-1)}.\end{equation*}
We use part of the estimates in the proof of Lemma \ref{izo}
to get
\begin{equation*}\label{sum1}
\sum_{j\in[0,n)\setminus\cup_{i=1}^s(n_i,n_i+p_i)}|f^j\omega|\leq 5\delta^{-1}|f^{n}\omega|\leq10\delta^{-1}.
\end{equation*}
These two inequalities yield the desired one.\end{proof}
By Sublemma \ref{bdd}, for any $x\in \hat
f^{l_i-l_{A_i}}A_i$ we have
$|S_n\hat\varphi_j(\hat f^{l_i-l_{A_i}}x_i)-S_n\hat\varphi_j(x)|\leq
{\rm Lip}(\varphi_j)\cdot 10\delta^{-1}$,
where ${\rm Lip}(\varphi_j)$ denotes the Lipschitz constant of $\varphi_j$.
Hence we have
\begin{equation}\label{con2}
|S_m\hat\varphi_j(\hat
f^{l_i-l_{A_i}}x_i)-S_m\hat\varphi_j(x)|\leq
{\rm Lip}(\varphi_j)\cdot 10\delta^{-1}+2\|\varphi_j\|(m-n)\leq
(\varepsilon^{1/2}/2)m.\end{equation}
From (\ref{con1}), (\ref{con2}) we obtain $S_{m}\hat\varphi_j(x)\geq
(b_j-\varepsilon^{1/2})m$. 
\end{proof}

\subsection{Construction of a measure on the horseshoe}\label{meas}
We construct a measure for which (\ref{upper5})
(\ref{upper3}) hold under the assumption that $\mathcal B_n'\neq\emptyset$.  
Let $L_1,\ldots,L_q$ be a collection of pairwise disjoint closed intervals
and $m$ a positive integer for which 
the conclusions of Lemma \ref{horse} 
hold.
Set $H=\bigcap_{j=0}^\infty(f^m)^{-j}\left(\bigcup_{i=1}^q L_i\right)$
and define $g=f^m|H$. 
The Koebe Principle implies that there exist constants $c>0$ and $\kappa>1$
such that for any $x\in H$ and every $n\geq0$, $|Dg^{n}(x)|\geq c\kappa^n$.
This implies that $g\colon
H\circlearrowleft$ is H\"older conjugate to
the one-sided full shift on $q$-symbols.
Define a continuous function $\Phi\colon H\to\mathbb R$ by
$\Phi(x)=\log |Dg(x)|.$
Pick an equilibrium state of $g$ for the potential $-\Phi$ and denote it by $\nu_\Phi$.
Namely, $\nu_\Phi$ is a $g$-invariant probability measure and satisfies
$$h_g(\nu_\Phi)-\nu_{\Phi}(\Phi)=\sup\left\{h_g(\nu)-\nu(\Phi)
\colon\text{$\nu$ is $g$-invariant}\right\}.$$ Here, $h_g(\nu)$
denotes the entropy of $(g,\nu)$.
Let $\sigma=(1/m)\sum_{i=0}^{m-1}(f^i)_*\nu_{\Phi}$, which is
$f$-invariant and ergodic. From Lemma \ref{horse}(c) it
follows that $S_m\varphi_j\geq (b_j-\varepsilon^{1/2})m$
$\nu_{\Phi}$-a.e. Hence $\sigma(\varphi_j)
=(1/m)\nu_{\Phi}(S_{m}\varphi_j)\geq b_j-\varepsilon^{1/2}$, and
(\ref{upper5}) holds.

For $k>0$ and a $(k+1)$-string
$(a_0,\ldots,a_k)$ of integers in $[1,q]$, let $$L_{a_0\cdots
a_{k}}=L_{a_0}\cap g^{-1}L_{a_1}\cap\cdots \cap g^{-k}L_{a_k}.$$
By the Koebe Principle, there exists $\tau\in(0,1)$ such that 
$|L_{a_0\cdots a_{k-1}a_{k}}|/|L_{a_0\cdots
a_{k-1}}| \geq \tau |L_{a_{k}}|.$ Hence
\begin{align*}
\sum_{(a_0,\ldots, a_k)} |L_{a_0\cdots a_k}|&=
\sum_{(a_0,\ldots, a_{k-1})}|L_{a_0\cdots
a_{k-1}}|\sum_{a_k}\frac{|L_{a_0\cdots a_{k-1}a_k}|}
{|L_{a_0\cdots a_{k-1}}|}\\
&\geq \tau\sum_{j=1}^q|L_j|\sum_{(a_0,\ldots, a_{k-1})}|L_{a_0\cdots
a_{k-1}}|\geq \left(\tau\sum_{j=1}^q|L_j|\right)^{k+1}.\end{align*}
This yields
\begin{equation}\label{lem1}\varliminf_{k\to\infty}\frac{1}{k}
\log\sum_{(a_0,\ldots, a_{k})}|L_{a_0\cdots
a_{k}}|\geq\log\sum_{j=1}^q|L_j|+\log \tau .\end{equation}

Let 
$\nu_{a_0\cdots a_k}$ denote the
atomic probability measure equally distributed on the periodic orbit of $g$ of period $k+1$ in $L_{a_0\cdots a_k}$.
Define a $g$-invariant probability measure $\nu_k$ by
$$\nu_k=\rho_k\sum_{(a_0,\ldots, a_k)}
|L_{a_0\cdots a_k}|\cdot\nu_{a_0\cdots a_k},$$ where $\rho_k$ is the
normalizing constant. Pick an accumulation point of the sequence
$\{\nu_k\}_k$ and denote it by $\nu_\infty$. Taking a subsequence
if necessary we may assume this convergence takes place for the
entire sequence. By the relation
 $\nu_k(L_{a_0\cdots a_k})=\rho_k|L_{a_0\cdots a_k}|$
and $|L_{a_0\cdots a_k}|\leq
\tau^{-1}e^{-(k+1)\nu_{a_0\cdots a_k}(\Phi)}$ we have
\begin{align*}
\log\sum_{(a_0,\ldots, a_k)}|L_{a_0\cdots a_k}|
&=\sum_{(a_0,\ldots, a_k)}\nu_k(L_{a_0\cdots
a_k})\left(-\log\nu_k(L_{a_0\cdots a_k})+
\log|L_{a_0\cdots a_k}|\right)\\
&\leq-\sum_{(a_0,\ldots, a_k)}\nu_k(L_{a_0\cdots
a_k})\log\nu_k(L_{a_0\cdots a_k})-(k+1)\nu_k(\Phi)-\log \tau .
\end{align*}
Then the usual proof of the variational principle \cite[Theorem 9.10]{Wal82} shows
\begin{equation}\label{lem2}
h_g(\nu_\infty)-\nu_\infty(\Phi)\geq\varlimsup_{k\to\infty}\frac{1}{k}\log\sum_{(a_0,\ldots,
a_k)}|L_{a_0\cdots a_k}|.
\end{equation}
Combining (\ref{lem1})
(\ref{lem2}) and then using Lemma \ref{horse}(b), for all large $n$ we have  \begin{equation*}h_g(\nu_\infty)-\nu_\infty( \Phi)
\geq \log\sum_{i=1}^q|L_i|+\log\tau \geq-2\varepsilon^{1/5} n+
\log\hat\mu(\mathcal B_n').\end{equation*} Since $F(\sigma)\leq0$ and $m\geq(1-\varepsilon^{1/5})n$ we have
\begin{align*}
F(\sigma)n&\geq \frac{F(\sigma)m}{1-\varepsilon^{1/5}}  =
\frac{h_g(\nu_{\Phi})-\nu_{\Phi}(\Phi)}{1-\varepsilon^{1/5}}\geq\frac{h_g(\nu_\infty)-\nu_\infty(\Phi)}{1-\varepsilon^{1/5}}\geq
\frac{-2\varepsilon^{1/5} n+\log\hat\mu(\mathcal B_n')}{1-\varepsilon^{1/5}}.\end{align*}
Rearranging this yields \eqref{upper3} and hence \eqref{upper1}. 
\qed


\subsection*{Acknowledgments.}
We thank Michihiro Hirayama, Toshio Mikami, Feliks Przytycki,
Juan Rivera-Letelier, Yoichiro Takahashi, Masato Tsujii and
Paulo Varandas for fruitful discussions.
The first-named author is partially supported by the Kyoto University Global COE Program.
The second-named author is supported by the Aihara Project, the FIRST Program from
the JSPS, initiated by the CSTP.

\end{document}